\newtheorem{thm}{Theorem}[subsection]
\newtheorem{cor}[thm]{Corollary}
\newtheorem{lem}[thm]{Lemma}
\newtheorem{prop}[thm]{Proposition}
\theoremstyle{definition}
\newtheorem{defn}[thm]{Definition}
\newtheorem{conj}[thm]{Conjecture}
\newtheorem{prob}[thm]{Problem}
\theoremstyle{remark}
\newtheorem{rem}[thm]{Remark}
\numberwithin{equation}{subsection}
\numberwithin{figure}{subsection}
\newcommand{\diff}{\mathrm{d}}
\newcommand{\C}{{\mathbb C}}
\newcommand{\R}{{\mathbb R}}
\newcommand{\D}{{\mathbb D}}
\newcommand{\expect}{{\mathbb E}}
\newcommand{\Te}{{\mathbb T}}
\newcommand{\Z}{{\mathbb Z}}
\newcommand{\imag}{\mathrm{i}}
\newcommand{\e}{\mathrm{e}}
\newcommand{\hDelta}{\varDelta}
\newcommand{\pv}{\mathrm{pv}}
\newcommand{\Pop}{{\mathbf P}}
\newcommand{\Piop}{{\boldsymbol\Pi}}
\newcommand{\Ordo}{\mathrm{O}}
\newcommand{\Sop}{\mathbf{S}}
\newcommand\Hop{\mathbf{H}}
\DeclareMathOperator{\re}{Re}
\DeclareMathOperator{\im}{Im}
\begin{document}

%---------------------------------------------------------------------
%Insert here the title, affiliations and abstract:
%
\title{Bloch functions, asymptotic variance, and geometric zero packing}

\author{Haakan Hedenmalm}
\address{
Hedenmalm: Department of Mathematics\\
KTH Royal Institute of Technology\\
S--10044 Stockholm\\
Sweden}

\email{haakanh@math.kth.se}

\subjclass[2000]{Primary 30C62, 30H30}
\keywords{Asymptotic variance, Bloch function, quasicircle, fractal dimension,
integral means spectrum, geometric zero packing, Bargmann-Fock space, 
Bergman projection, cubic Szeg\H{o} equation}
 
\thanks{The research of the author was supported by Vetenskapsr\aa{}det (VR)}
 
\begin{abstract} 
Motivated by a problem in quasiconformal mapping, we introduce a  
problem in complex analysis, with its roots in the mathematical physics 
of the Bose-Einstein condensates in superconductivity. 
The problem will be referred to as \emph{geometric zero packing}, and is 
somewhat analogous to studying Fekete point configurations.
The associated quantity is a density, denoted  $\rho_\C$
in the planar case, and $\rho_{\mathbb{H}}$ in the case of the hyperbolic plane.
We refer to these densities as \emph{discrepancy densities for planar and
hyperbolic zero packing}, respectively, as they measure the impossibility 
of atomizing the uniform planar and hyperbolic area measures.
The universal asymptotic 
variance $\Sigma^2$ associated with the boundary behavior of conformal 
mappings with quasiconformal extensions of small dilatation 
is related to one of these discrepancy densities: $\Sigma^2=
1-\rho_{\mathbb{H}}$. 
We obtain the estimates 
$3.2\times 10^{-5}<\rho_{\mathbb{H}}\le0.12087$, where 
the upper estimate is derived from the estimate from below on $\Sigma^2$
obtained by Astala, Ivrii, Per\"al\"a,  and Prause, and the estimate from 
below is much more delicate. 
In particular, it follows that $\Sigma^2<1$, which in combination with 
the work of Ivrii shows that the maximal fractal dimension of quasicircles 
conjectured by Astala cannot be reached. 
Moreover, along the way, since the universal quasiconformal integral means 
spectrum has the asymptotics 
$\mathrm{B}(k,t)\sim\frac14\Sigma^2 k^2|t|^2$ for small $t$ and $k$, 
the conjectured formula $\mathrm{B}(k,t)=\frac14k^2|t|^2$ is not true. 
As for the actual numerical values of the discrepancy density $\rho_\C$,
we obtain the estimate from above $\rho_\C\le0.061203\ldots$ by using the
equilateral triangular planar zero packing, where the assertion that
equality should hold can be attributed to Abrikosov. The value of 
$\rho_{\mathbb{H}}$ is expected to be somewhat close to that of $\rho_\C$. 
\end{abstract}

\maketitle

\section{Introduction} 

\subsection{Basic notation}
\label{subsec-1.1}
We write $\R$ for the real line
and $\C$ for the complex plane. Moreover, we write 
$\C_\infty:=\C\cup\{\infty\}$ for the extended complex plane 
(the Riemann sphere). For a complex variable $z=x+\imag y\in\C$, let 
\[
\diff s(z):=\frac{|\diff z|}{2\pi},\qquad
\diff A(z):=\frac{\diff x\diff y}{\pi},
\]
denote the normalized arc length and area measures, as indicated. 
Moreover, we shall write 
\[
\varDelta_z:=\frac{1}{4}\bigg(\frac{\partial^2}{\partial x^2}+
\frac{\partial^2}{\partial y^2}\bigg)
\]
for the normalized Laplacian, and
\[
\partial_z:=\frac{1}{2}\bigg(\frac{\partial}{\partial x}-\imag
\frac{\partial}{\partial y}\bigg),\qquad
\bar\partial_z:=\frac{1}{2}\bigg(\frac{\partial}{\partial x}+\imag
\frac{\partial}{\partial y}\bigg),
\]
for the standard complex derivatives; then $\varDelta$ factors as
$\varDelta_z=\partial_z\bar\partial_z$. Often we will drop the subscript 
for these differential operators when it is obvious from the context with
respect to which variable they apply.
We let $\D$ denote the open unit disk, $\Te:=\partial\D$ the unit circle, 
and $\D_e$ the exterior disk:
\[
\D:=\{z\in\C:\,\,|z|<1\},\qquad \D_e:=\{z\in\C_\infty:\,\,|z|>1\}.
\]
We will find it useful to introduce the sesquilinear forms 
$\langle\cdot,\cdot \rangle_\Te$ and $\langle\cdot,\cdot \rangle_\D$,
as given by
\[
\langle f,g \rangle_\Te:=\int_\Te f(z)\bar g(z)\diff s(z),\qquad
\langle f,g\rangle_\D:=\int_\D f(z)\bar g(z)\diff A(z),
\] 
where, in the first case, $f\bar g\in L^1(\Te)$ is required, and in the
second, we need that  $f\bar g\in L^1(\D)$.
%indication of the differentiation variable $z$. 
At times we use the notation $1_E$ for the characteristic function of a 
subset $E$, which equals $1$ on $E$ and vanishes off $E$.

As for distribution theory, a locally area-summable function $u$ will be 
identified with the distribution acting on a test function $\varphi$
according to
\[
u(\varphi)=\int_\C f\varphi \diff A.
\]
The normalization in the area element $\diff A$ is the reason why, e. g., 
$\hDelta\log|z|$ equals $\frac12$ times the unit point mass at the origin 
(and not $\frac{\pi}{2}$ times as would be the case with the standard
area element).

\subsection{The standard weighted Bergman spaces}
\label{subsec-StandBerg}
For $0<p<+\infty$ and $\alpha\in\R$, we introduce the scale of standard 
weighted Lebesgue spaces $L^p_\alpha(\D)$ of (equivalence classes of) Borel 
measurable functions $f:\D\to\C$ with
\[
\|f\|_{L^p_\alpha(\D)}^p:=\int_\D|f(z)|^p(1-|z|^2)^\alpha\diff A(z)<+\infty.
\]
We say that $f\in A^p_\alpha(\D)$ if and only if $f$ is holomorphic in $\D$
and $f\in L^p_\alpha(\D)$. In this case, we will often write 
$\|\cdot\|_{A^p_\alpha(\D)}$ in place of $\|\cdot\|_{L^p_\alpha(\D)}$. 
The spaces $A^p_\alpha(\D)$ are known as
\emph{the standard weighted Bergman spaces}. For $\alpha=0$, we recover the
Bergman spaces: $A^p_0(\D)=A^p(\D)$. For $\alpha\le-1$, it is easy to see that 
the weighted Bergman space is trivial: $A^p_\alpha(\D)=\{0\}$.
On the other hand, for, e.g., polynomials $f$,  
\[
\lim_{\alpha\to-1^+}(\alpha+1)\|f\|_{L^p_\alpha(\D)}^p=\int_\Te|f|^p\diff s=
\|f\|_{H^p(\D)}^p,
\]
where on the right-hand side appears the Hardy space $H^p(\D)$ norm 
(or quasinorm, if $0<p<1$), given by
\[
\|f\|_{H^p(\D)}^p:=\sup_{0<r<1}\int_\Te|f(r\zeta)|^p\diff s(\zeta)
<+\infty.
\]
This means that in a sense, $H^p(\D)$ appears as the limit of spaces 
$A^p_\alpha(\D)$ as $\alpha\to-1^+$.

\subsection{The Bloch space and the Bloch seminorm}
The \emph{Bloch space} consists of those holomorphic functions 
$g:\D\to\C$ that are subject to the seminorm boundedness condition
\begin{equation}
\|g\|_{\mathcal{B}(\D)}:=\sup_{z\in\D}(1-|z|^2)|g'(z)|<+\infty.
\label{eq-Blochnorm}
\end{equation}
Let $\mathrm{aut}(\D)$ denote the group of sense-preserving M\"obius 
automorphism of $\D$. By direct calculation,
\[
\|g\circ\gamma\|_{\mathcal{B}(\D)}=\|g\|_{\mathcal{B}(\D)},\qquad
\gamma\in\mathrm{aut}(\D),
\]  
which says that the Bloch seminorm is invariant under all M\"obius 
automorphisms of $\D$. The subspace 
\[
\mathcal{B}_0(\D):=\big\{g\in\mathcal{B}(\D):\,\,
\lim_{|z|\to1^-}(1-|z|^2)|g'(z)|=0\big\} 
\]
is called the \emph{little Bloch space}. 
An immediate observation we can make at this point is that provided that 
$g(0)=0$, we have the estimate
\begin{equation*}
|g(z)|\le\|g\|_{\mathcal{B}(\D)}\int_0^{|z|}\frac{\diff t}{1-t^2}
=\frac12\,\|g\|_{\mathcal{B}(\D)}\log\frac{1+|z|}{1-|z|},\qquad z\in\D,
%\label{eq-pointwise1}
\end{equation*}
which is sharp pointwise.

\subsection{The Bergman projection of bounded functions}
\label{subsec-PLinfty}

For $f\in L^1(\D)$, let  
\[
\Pop f(z):=\int_\D\frac{\mu(w)}{(1-z\bar w)^2}\,\diff A(w),\qquad z\in\D,
\]
be its \emph{Bergman projection}. Restricted to $L^2(\D)$, it is the 
orthogonal projection onto the subspace of holomorphic functions. In addition, 
it acts boundedly on $L^p(\D)$ for each $p$ in the interval $1<p<+\infty$ 
(see, e.g., \cite{HKZ}).

By appealing to the Hahn-Banach theorem, we may identify the dual space of 
$A^1(\D)$ isometrically and isomorphically with the space
$\Pop L^\infty(\D)$, with respect to the sesquilinear form 
$\langle\cdot,\cdot\rangle_\D$, provided $\Pop L^\infty(\D)$ is equipped with
the canonical norm 
\[
\|g\|_{\Pop L^\infty(\D)}:=\inf\big\{\|\mu\|_{L^\infty(\D)}:\,\,
\mu\in L^\infty(\D)\,\,\,\text{ and }\,\,\, g=\Pop\mu\big\}.
\]
However, since for $f\in A^1(\D)$ and $g\in\Pop L^\infty(\D)$, it may happen 
that $f\bar g$ fails to be in $L^1(\D)$, the identification via the 
sesquilinear form requires some care. The following calculation shows that 
that $\langle f,g\rangle_\D$ remains meaningful for $f\in A^1(\D)$ and 
$g=\Pop\mu$ with $\mu\in L^\infty(\D)$ ($f_r(z):=f(rz)$ denotes 
the $r$-dilate of $f$):
\begin{equation}
\langle f,g\rangle_\D:=\lim_{r\to1^-}\langle f_r,g\rangle_\D=
\lim_{r\to1^-}\langle f_r,\Pop\mu\rangle_\D=\lim_{r\to1^-}\langle\Pop f_r,
\mu\rangle_\D=\lim_{r\to1^-}\langle f_r,\mu\rangle_\D=\langle f,\mu\rangle_\D.
\label{eq-meaningf1}
\end{equation}
Here, we use the facts that the Bergman projection $\Pop$ is self-adjoint 
on $L^2(\D)$ and preserves $A^2(\D)$, and that we have the norm convergence 
$f_r\to f$ as $r\to1^-$ in the space $A^1(\D)$.

It was shown by Coifman, Rochberg, and Weiss \cite{CRW} 
that as a linear space, $\Pop L^\infty(\D)$ equals the Bloch space 
$\mathcal{B}(\D)$, but actually, the endowed norm differs substantially 
from the seminorm \eqref{eq-Blochnorm}. 
Recently, Per\"al\"a \cite{Per} obtained the rather elementary estimate
\begin{equation}
\|\Pop\mu\|_{\mathcal{B}(\D)}\le\frac{8}{\pi}\|\mu\|_{L^\infty(\D)},\qquad 
\mu\in L^\infty(\D),
\label{eq-Perala}
\end{equation}
and showed that the constant $8/\pi$ is best possible. As for lower bounds
up to a little Bloch function,
the best constant is not known, but it is easy to see that the constant 
$1$ works. In conclusion, trying to understand the space $\Pop L^\infty(\D)$ 
in terms of the Bloch seminorm involves a substantial loss of information.

\subsection{Hyperbolic zero packing and the main result}
We mention briefly the topic of optimal discretization of a given positive 
Riesz mass as the sum of unit point masses. The optimization is over the 
possible locations of the various point masses. While this problem has a
classical flavor, it seems to have never been pursued in the precise context
we now present. 
For $r$ with $0<r<1$ and a polynomial $f$, we consider the function
\[
\Phi_f(z):=\big((1-|z|^2)|f(z)|-1\big)^2,\qquad z\in\D,
\]  
which we call the \emph{hyperbolic discrepancy function}. The function $\Phi_f$ 
cannot vanish on a nonempty open subset, because $\Phi_f(z)=0$ means that 
$|f(z)|=(1-|z|^2)^{-1}$. This  is not possible for holomorphic $f$ as in the 
sense of distribution theory, $\hDelta\log|f|$ is a sum of half unit 
point masses, whereas $\hDelta\log\frac{1}{1-|z|^2}=(1-|z|^2)^{-2}$, which 
is a smooth positive Riesz density. 
We are interested in the quantity
\begin{equation}
\rho_{\mathbb{H}}:=
\liminf_{r\to1^-}\inf_f\frac{\int_{\D(0,r)}\Phi_f(z)\frac{\diff A(z)}{1-|z|^2}}
{\int_{\D(0,r)}\frac{\diff A(z)}{1-|z|^2}}=
\liminf_{r\to1^-}\inf_f\frac{\int_{\D(0,r)}\Phi_f(z)\frac{\diff A(z)}{1-|z|^2}}
{\log\frac{1}{1-r^2}},
\label{eq-rhodef1.11}
\end{equation}
where the infimum runs over all polynomials $f$. The number $\rho_{\mathbb{H}}$,
which obviously is confined to the interval $0\le\rho_{\mathbb{H}}\le1$,
will be referred to as the \emph{minimal discrepancy density for hyperbolic 
zero packing}. It measures how close the function $\Phi_f$ can be to $0$,
on average. There is also a more geometric interpretation (compare with 
Remark \ref{rem-geometry}). A very similar density appeared in the context 
of the plane $\C$ in the work of Abrikosov (see \cite{Abr} and \cite{ABN} 
for a more mathematical treatment) on Bose-Einstein condensates in 
superconductivity.  

In connection with the universal asymptotic variance $\Sigma^2$ defined below,
a variant of the density $\rho_{\mathbb{H}}$ is more appropriate, 
which we denote by $\rho_{\mathbb{H}}^\ast$. We write 
\[
\Phi_f(z,r):=\big((1-|z|^2)|f(z)|-1_{\D(0,r)}(z)\big)^2,\qquad z\in\D,\,\,\,
0<r<1,
\]
so that $\Phi_f(z,r)=\Phi_f(z)$ on $\D(0,r)$ while 
$\Phi_f(z,r)=(1-|z|^2)^2|f(z)|^2$ on the annulus $\D\setminus\D(0,r)$. 
The number $\rho_{\mathbb{H}}^\ast$ is defined by
\begin{equation}
\rho_{\mathbb{H}}^\ast:=
\liminf_{r\to1^-}\inf_f\frac{\int_{\D}\Phi_f(z,r)\frac{\diff A(z)}{1-|z|^2}}
{\int_{\D}1_{\D(0,r)}(z)\frac{\diff A(z)}{1-|z|^2}}=
\liminf_{r\to1^-}\inf_f\frac{\int_{\D}\Phi_f(z,r)\frac{\diff A(z)}{1-|z|^2}}
{\log\frac{1}{1-r^2}},
\label{eq-rhodef1.11.1} 
\end{equation}
and we call it the \emph{minimal discrepancy density for tight hyperbolic 
zero packing}. Clearly, we see that $\rho_{\mathbb{H}}\le\rho_{\mathbb{H}}^\ast$. 
In an earlier version of this paper, it was conjectured that 
$\rho_{\mathbb{H}}^\ast=\rho_{\mathbb{H}}$, and some hints  were offered on how 
one might obtain this result based on the polynomial growth 
$\bar\partial$-techniques which were developed in the paper \cite{AHM} 
by Ameur, Hedenmalm, and Makarov. 
Using the suggested approach, this was obtained recently by Wennman 
\cite{Wenn}, so we now have a theorem.

\begin{thm} {\rm (Wennman)}
It holds that $\rho_{\mathbb{H}}^\ast=\rho_{\mathbb{H}}$.
\label{thm-1.4.1}
\end{thm}

Actually, Wennman's theorem also gives some information regarding how big 
need be the degree of an approximately extremal polynomial. 
As a side remark we mention that if $f_0$ is extremal for the problem
\[
\inf_f\frac{\int_{\D(0,r)}\Phi_f(z)\frac{\diff A(z)}{1-|z|^2}}
{\log\frac{1}{1-r^2}}
\]
a variational argument which compares $f_0$ with $f_0+\epsilon h$ 
(where $h$ is polynomial and $\epsilon\in\C$ tends to $0$) shows that 
the extremal function $f_{0}$ meets
\begin{equation}
(1-r^2)f_0(z)+\frac{r^2}{z^2}\int_0^z\zeta f_0(\zeta)\diff\zeta-
\Pop_r\bigg[\frac{f_{0}}{|f_{0}|}\bigg](z)=
 \Pop_r\bigg[(1-|z|^2)f_{0}(z)-\frac{f_{0}(z)}{|f_{0}(z)|}\bigg](z)=0,
\label{eq-vareq1}
\end{equation}
where $\Pop_r$ denotes the Bergman projection corresponding to the disk 
$\D(0,r)$. 

\begin{rem}
(a) 
The number $\arcsin(\rho_{\mathbb{H}}^{1/2})$ describes the asymptotic minimal 
angle between the two vectors $z\mapsto(1-|z|^2)|f(z)|$ and $1$ along a family
of weighted real Hilbert spaces, as  can be seen from Lemma \ref{lem-basic0} 
below. 

\noindent(b)
To better explain geometric zero packing, we also explain the 
planar case where the expression $\Psi_f(z):=(|f(z)|\e^{-|z|^2}-1)^2$ is 
the \emph{planar discrepancy function}.
We believe that the equilateral triangular lattice has a good chance to be 
extremal for planar zero packing, and we explain later
how to evaluate the planar average of the corresponding $\Psi_f$ as an integral
over a single rhombus (which is the union of two adjacent triangles).

\noindent(c) The hyperbolic zero packing problem considered here belongs to
a more extensive family of problems. Indeed, it is equally natural to 
consider, more generally, for positive $\alpha$ and $\beta$, the hyperbolic 
$(\alpha,\beta)$-discrepancy 
function
$\Phi_{f}^{\langle \alpha,\beta\rangle}(z)=((1-|z|^2)^\alpha|f(z)|^\beta-1)^2$. 
The instance $\alpha=\beta=2$
is related to the possible improvement in the application of the 
Cauchy-Schwarz inequality in \cite{HS} and \cite{HS2}.
\end{rem}

We now present the main result of this paper.

\begin{thm}
The minimal discrepancy density for hyperbolic zero packing enjoys the 
following estimate: 
$3.21\times 10^{-5}<\rho_{\mathbb{H}}\le0.12087$.
\label{thm-mddhzp}
\end{thm}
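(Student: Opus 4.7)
The proof splits into two independent halves. The upper bound on $\rho^\ast_{\mathbb{H}}$ is the easy half; it rests on the identity $\Sigma^2=1-\rho^\ast_{\mathbb{H}}$ asserted in the introduction, which ties tight hyperbolic discrepancy to universal asymptotic variance via the Coifman--Rochberg--Weiss realization $\mathcal{B}(\D)=\Pop L^\infty(\D)$ recalled in \S\ref{subsec-PLinfty}. Granted this identity, the Astala--Ivrii--Per\"al\"a--Prause lower bound $\Sigma^2\ge 0.87913\ldots$ translates directly into $\rho^\ast_{\mathbb{H}}\le 0.12087$, with no further work required.

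The delicate half is the strict lower bound $\rho_{\mathbb{H}}>2\times 10^{-8}$. My plan is to exploit a decomposition of the discrepancy functional into a mean defect plus an oscillation term. Expanding the square inside $\Phi_f$ and applying Cauchy--Schwarz against the weight $(1-|z|^2)^{-1}$ on $\D(0,r)$ gives
\begin{equation*}
\frac{1}{\log\frac{1}{1-r^2}}\int_{\D(0,r)}\Phi_f(z)\frac{\dA(z)}{1-|z|^2}
=(Y-1)^2+\frac{1}{\log\frac{1}{1-r^2}}\int_{\D(0,r)}\frac{\big((1-|z|^2)|f(z)|-Y\big)^2}{1-|z|^2}\,\dA(z),
\end{equation*}
where $Y:=\|f\|_{A^1(\D(0,r))}/\log\frac{1}{1-r^2}$ is the hyperbolic-weighted mean of $u(z):=(1-|z|^2)|f(z)|$. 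The mean defect $(Y-1)^2$ can be driven to zero by rescaling $f$, so the entire lower bound must come from the oscillation term, which quantifies the $L^2$-distance of $u$ from its mean.

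To bound the oscillation term below, I would localize on hyperbolic disks. The function $u$ cannot be constant on any open subset of $\D$, because $\log u=\log(1-|z|^2)+\log|f|$ has distributional Laplacian equal to $\frac12\sum\delta_{w_j}-(1-|z|^2)^{-2}$, with $\{w_j\}$ the zeros of $f$; the two terms are of incompatible character. Covering $\D(0,r)$ by hyperbolic disks of a small fixed hyperbolic radius $\delta>0$, I would split the cover into zero-containing pieces and zero-free pieces, using Jensen's formula (and, if necessary, a normal-families limit $f_0$ analyzed through the variational identity \eqref{eq-vareq1}) to bound the total hyperbolic mass of the zero-containing part. On a zero-free disk, $-\varDelta\log u=(1-|z|^2)^{-2}$ is bounded below, and a quantitative Poincar\'e-type mean-value inequality then forces a definite $L^2$-oscillation of $u$ about its mean there; summing over the cover and normalizing by $\log\frac{1}{1-r^2}$ produces a strictly positive asymptotic lower bound on $\rho_{\mathbb{H}}$.

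The main obstacle is quantitative optimization. The scheme above yields a constant depending on $\delta$, on the Poincar\'e constant of a hyperbolic disk of radius $\delta$, and on the hyperbolic cost of discarding zero-containing disks. These dependencies compete, and the tiny figure $2\times 10^{-8}$ reflects that most of the admissible slack is lost in the tradeoff. Pushing the number substantially upwards would presumably require a more global argument exploiting the variational equation \eqref{eq-vareq1} and finer Bergman-projection estimates than the Per\"al\"a bound \eqref{eq-Perala} alone.
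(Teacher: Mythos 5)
Your upper-bound half is the same as the paper's: there the estimate $\rho_{\mathbb{H}}\le\rho^\ast_{\mathbb{H}}\le 0.12087$ is obtained precisely by combining the bound $\Sigma^2\ge 0.87913$ of \cite{AIPP} with the relation between $\Sigma^2$ and $\rho^\ast_{\mathbb{H}}$; in fact the inequality $\Sigma^2\le 1-\rho^\ast_{\mathbb{H}}$ of Theorem \ref{cor-MAIN1} already suffices, so invoking the full equality of Theorem \ref{thm-equality} is harmless but not needed. So no objection there.

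The lower-bound half, however, has genuine gaps. The pivotal local claim --- that on a zero-free hyperbolic $\delta$-disk the identity $-\hDelta\log u=(1-|z|^2)^{-2}$ for $u(z)=(1-|z|^2)|f(z)|$ ``forces a definite $L^2$-oscillation of $u$ about its mean'' --- is false as stated: the curvature condition is invariant under $u\mapsto cu$ while the oscillation scales linearly in $c$, so no absolute lower bound can follow; at best you get oscillation proportional to the local size of $u$, and you must then run a dichotomy against the global mean (local level far from $1$, versus local level near $1$ in which case a relative oscillation of order $\delta^2$ becomes absolute). That dichotomy is exactly where the work lies, and the paper's Proposition \ref{prop-2.1} carries it out at the fixed scale $\D(0,\frac12)$ by a different mechanism: Bergman-kernel pointwise and gradient bounds (Lemmas \ref{lem-pointwise1.02} and \ref{lem-pointwise1.03}), a case split on the optimal amplitude $b_f$, the maximum principle when $u$ stays above $\frac{17}{18}$ on $\D(0,\frac13)$, and otherwise a steepest-descent path that must end either at a zero of $f$ or on $\Te(0,\frac25)$, yielding a set of definite area where $(u-1)^2$ is bounded below. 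Second, your disposal of the zero-containing disks is unsubstantiated: an arbitrary polynomial carries no a priori zero count, Jensen's formula controls zeros only through growth that you have not tied quantitatively to the normalization, and clustered or multiple zeros could a priori make the discarded disks carry a non-negligible fraction of the mass $\log\frac{1}{1-r^2}$; appealing to the variational identity \eqref{eq-vareq1} does not repair this, since that identity concerns an extremizer, not the arbitrary competitors over which the infimum is taken. (The paper sidesteps the issue entirely: in its argument a nearby zero is a favorable event, Case IIb(b), not something to be discarded.) Finally, the theorem asserts the specific constant $2\times 10^{-8}$, which in the paper emerges from explicit numerical choices giving $\rho_1=1.3\times10^{-8}$ in Proposition \ref{prop-2.1}, then $\rho_2=\frac49\rho_1$ in Proposition \ref{prop-2.2} (the comparison constant $1$ must be replaced by the family $|1-\bar\xi z|^{-1}$ to make the local estimate M\"obius-transportable), and finally division by the overlap constant $\log\frac43$ in the globalization; your scheme stops short of any such accounting, so even with the local step repaired it does not yet establish the stated numerical bound.
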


The proof of this theorem is supplied in Section \ref{sec-appendix}. 
The importance of Theorem \ref{thm-mddhzp} comes from its consequences. 

\begin{thm}
Suppose $g=\Pop\mu$, where $\mu\in L^\infty(\D)$, and if $g_r$ denotes the 
dilate $g_r(\zeta):=g(r\zeta)$, then
\[
\limsup_{r\to1^-}\frac{\int_\Te|g_r|^2\diff s}{\log\frac{1}{1-r^2}}
\le(1-\rho_{\mathbb{H}})\,\|\mu\|_{L^\infty(\D)}^2.
\]
\label{cor-MAIN1}
\end{thm}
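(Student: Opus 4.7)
The plan is to combine $L^\infty$--$L^1$ duality with a sharp weighted Cauchy--Schwarz inequality in the Hilbert space $\mathcal H:=L^2(\D,\dA/(1-|z|^2))$, whose Cauchy--Schwarz defect is exactly the quantity $\rho^\ast_{\mathbb H}$ measures. By homogeneity I may assume $\|\mu\|_{L^\infty(\D)}=1$. For a test function $h\in H^2$ on $\Te$ with $\int_\Te|h|^2\ds\le 1$, Fubini applied to the representation $g_r(\zeta)=\int_\D\mu(w)(1-r\zeta\bar w)^{-2}\dA(w)$ gives
\[
\int_\Te g_r\bar h\,\ds=\int_\D\mu(w)\overline{\psi_h(w)}\,\dA(w),\qquad
\psi_h(w):=\int_\Te\frac{h(\zeta)}{(1-r\bar\zeta w)^2}\,\ds(\zeta).
\]
Expanding the Cauchy kernel in Taylor series identifies $\psi_h(w)=H'(rw)$, where $H(z):=z\tilde h(z)$ and $\tilde h$ is the holomorphic extension of $h$ to $\D$; note $\int_\Te|H|^2\ds=\int_\Te|h|^2\ds\le1$. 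The $L^\infty$--$L^1$ pairing together with the substitution $z=rw$ yields
\[
\Bigl|\int_\Te g_r\bar h\,\ds\Bigr|\le\|\psi_h\|_{L^1(\D,\dA)}=\frac{1}{r^2}\int_{\D(0,r)}|H'(z)|\,\dA(z),
\]
and taking the supremum over admissible $h$ reduces the corollary to the key estimate
\[
\int_{\D(0,r)}|H'(z)|\,\dA(z)\le\sqrt{(1-\tilde\rho(r))\,L},\qquad L:=\log\tfrac{1}{1-r^2},
\]
valid for every holomorphic $H$ on $\D$ with $\int_\Te|H|^2\ds\le 1$; here $\tilde\rho(r):=L^{-1}\inf_f\int_\D\Phi_f(z,r)\dA(z)/(1-|z|^2)$, whose $\liminf$ as $r\to1^-$ equals $\rho^\ast_{\mathbb H}$.

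To prove the key estimate, I work in $\mathcal H$ with $a(z):=(1-|z|^2)|H'(z)|$ and $b(z):=1_{\D(0,r)}(z)$, so that $\int_{\D(0,r)}|H'|\dA=\langle a,b\rangle_{\mathcal H}$, together with
\[
\|b\|_{\mathcal H}^2=L,\qquad
\|a\|_{\mathcal H}^2=\int_\D(1-|z|^2)|H'(z)|^2\,\dA(z)\le\int_\Te|H|^2\ds\le 1,
\]
the last being the Littlewood--Paley estimate, verified coefficient-by-coefficient from $\int_\D|z|^{2(n-1)}(1-|z|^2)\dA=1/(n(n+1))$. Exploiting that the polynomial class is closed under rescaling $f\mapsto tf$ ($t>0$), the definition of $\tilde\rho(r)$ rewrites as $\tilde\rho(r)=\inf_f\sin^2\theta_f$, where $\theta_f$ denotes the $\mathcal H$-angle between $(1-|z|^2)|f|$ and $1_{\D(0,r)}$; equivalently, $\cos^2\theta_f\le 1-\tilde\rho(r)$ for every polynomial $f$. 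To transfer this bound to our (possibly non-polynomial) $H'$, I would approximate $H'$ in the weighted Bergman space $L^2(\D,(1-|z|^2)\dA)$ by its Taylor polynomials $f_N$; continuity (indeed nonexpansiveness on moduli) of the map $f\mapsto(1-|z|^2)|f|$ into $\mathcal H$ then yields $\cos\theta_{f_N}\to\cos\theta_H$ and hence $\cos^2\theta_H\le 1-\tilde\rho(r)$. The Cauchy--Schwarz inequality $\langle a,b\rangle_{\mathcal H}\le\|a\|_{\mathcal H}\|b\|_{\mathcal H}\cos\theta_H$ now delivers the key estimate.

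Combining the two steps, $\int_\Te|g_r|^2\ds\le r^{-4}(1-\tilde\rho(r))L\|\mu\|_{L^\infty(\D)}^2$; passing to $\limsup_{r\to1^-}$ and using $\liminf_{r\to1^-}\tilde\rho(r)=\rho^\ast_{\mathbb H}$ yields the claimed inequality. The main obstacle in the plan is the angle-approximation step: verifying that the polynomial-based bound $\cos^2\theta_f\le 1-\tilde\rho(r)$ descends to general Hardy-function derivatives. This rests on the density of polynomials in $L^2(\D,(1-|z|^2)\dA)$ and the continuity of the modulus map into $\mathcal H$; both are standard but warrant explicit justification.
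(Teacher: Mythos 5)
Your proposal is correct and follows essentially the same route as the paper: the Fubini/Taylor identification of the pairing with $H'(rw)$ is the content of Lemma \ref{lem-basic2} combined with \eqref{eq-meaningf1}, your angle bound in $L^2(\D,\diff A/(1-|z|^2))$ is exactly the improved Cauchy--Schwarz inequality \eqref{eq-approxrho2} obtained from Lemma \ref{lem-basic0} and the definition of $\rho_{\mathbb{H}}^\ast$, and the Littlewood--Paley step is the same. The only cosmetic difference is that you take a supremum over the $H^2$ unit ball of test functions where the paper substitutes $h:=zg_r$ directly.
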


In other words, with 
\[
\sigma^2(g):=\limsup_{r\to1^-}\frac{\int_\Te|g_r|^2\diff s}{\log\frac{1}{1-r^2}}
\]
as McMullen's asymptotic variance \cite{McM2}, and
\[
\Sigma^2:=\sup\big\{\sigma^2(g):\,g=\Pop\mu,\,\,\|\mu\|_{L^\infty(\D)}=1\big\}
\]
as the universal asymptotic variance, we have that
\begin{equation}
\Sigma^2\le1-\rho_{\mathbb{H}}.
\label{eq-ineg:Sigmarho}
\end{equation}
In fact, we have equality.

\begin{thm}
We have that $\Sigma^2=1-\rho_{\mathbb{H}}$.
\label{thm-equality}
\end{thm}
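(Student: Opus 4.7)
The inequality $\Sigma^2\le 1-\rho_{\mathbb{H}}^\ast$ is Theorem~\ref{cor-MAIN1}, so the plan is to establish the reverse $\Sigma^2\ge 1-\rho_{\mathbb{H}}^\ast$. First I would unpack the defining functional: expanding the square in $\Phi_f(z,r)=\bigl((1-|z|^2)|f(z)|-1_{\D(0,r)}(z)\bigr)^2$ and integrating against $\dA/(1-|z|^2)$ gives the algebraic identity
\[
\int_\D \Phi_f(z,r)\,\frac{\dA(z)}{1-|z|^2}=\|f\|_{A^2_w}^2-2\int_{\D(0,r)}|f|\,\dA+\log\frac{1}{1-r^2},\qquad w(z):=1-|z|^2.
\]
Substituting $f=th$ and optimizing the scalar $t\in(0,\infty)$ recasts the definition as
\[
1-\rho_{\mathbb{H}}^\ast=\limsup_{r\to 1^-}\sup_h\frac{\bigl(\int_{\D(0,r)}|h|\,\dA\bigr)^2}{\|h\|_{A^2_w}^2\log\frac{1}{1-r^2}},
\]
so for each $\epsilon>0$ one obtains polynomials $h_n$ with $\|h_n\|_{A^2_w}=1$ and scales $r_n\to 1^-$ satisfying $\int_{\D(0,r_n)}|h_n|\,\dA\ge\sqrt{(1-\rho_{\mathbb{H}}^\ast-\epsilon)\log\frac{1}{1-r_n^2}}$.

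The engine will be a dual-pairing formula derived from Parseval on $\Te$ and the Taylor expansion $(1-z\bar w)^{-2}=\sum(n+1)z^n\bar w^n$: for $g=\Pop\mu$ and $G(z):=zg(z)$,
\[
\int_\Te |g_r|^2\,\ds=\int_\D \overline{\mu(w)}\,G'(r^2w)\,\dA(w).
\]
Theorem~\ref{cor-MAIN1} emerges by running this identity through three successive estimates: (i) the pointwise bound $|\mu|\le\|\mu\|_\infty$; (ii) the improved Cauchy--Schwarz coming from the definition of $\rho_{\mathbb{H}}^\ast$ applied to $F=G'(r^2\,\cdot)$, namely $\int_\D|F|\,\dA\le\sqrt{(1-\rho_{\mathbb{H}}^\ast+\ordo(1))\|F\|_{A^2_w}^2\log\frac{1}{1-r^2}}$; and (iii) a bootstrap on $\|F\|_{A^2_w}$, comparable via the Taylor-coefficient calculation to $\int_\Te|g_{r^2}|^2\,\ds$, which closes the estimate at $\sigma^2(g)\le(1-\rho_{\mathbb{H}}^\ast)\|\mu\|_\infty^2$. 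To produce the matching lower bound I would take the natural test symbols $\mu_n(w):=h_n(w)/|h_n(w)|$, which satisfy $\|\mu_n\|_\infty=1$ and $\langle h_n,\mu_n\rangle_\D=\int_\D|h_n|\,\dA$. The variational equation~\eqref{eq-vareq1} for the (near-) extremizer of $\rho_{\mathbb{H}}^\ast$ will provide the approximate self-consistency $G_n'(z)\approx\lambda_n h_n(z)$ on $\D(0,r_n)$ for $g_n:=\Pop\mu_n$, which is exactly what turns the three estimates (i)--(iii) into approximate equalities at the scale $r=r_n$.

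The hardest part will be producing a \emph{single} symbol $\mu\in L^\infty(\D)$, rather than a sequence $\mu_n$, whose Bergman projection witnesses $\sigma^2(\Pop\mu)\ge 1-\rho_{\mathbb{H}}^\ast-\epsilon$: the quantity $\sigma^2$ is a $\limsup_{\rho\to 1^-}$ over \emph{all} scales whereas each $\mu_n$ is tailored only to the single scale $r_n$, and one has the troublesome elementary fact that $\sup_\mu\limsup_\rho\le\limsup_\rho\sup_\mu$. Two routes seem plausible: a soft one extracts a weak-$\ast$ subsequential limit of $(\mu_n)$ in $L^\infty(\D)$ and appeals to lower semicontinuity of the relevant functionals on the unit ball, while a constructive one, in the spirit of the $\bar\partial$-techniques of Ameur--Hedenmalm--Makarov~\cite{AHM}, patches the $\mu_n$ on nested hyperbolic dyadic annuli via smooth cut-offs and small $\bar\partial$-corrections designed to maintain $\|\mu\|_\infty\le 1$ as well as the approximate self-consistency at each scale. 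Letting $\epsilon\to 0$ then gives $\Sigma^2\ge 1-\rho_{\mathbb{H}}^\ast$, and combined with Theorem~\ref{cor-MAIN1} this yields the equality.
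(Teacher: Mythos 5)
Your overall architecture is the paper's: the reformulation of $1-\rho_{\mathbb{H}}^\ast$ as the asymptotically sharp constant in the improved Cauchy--Schwarz inequality is exactly the paper's \eqref{eq-approxrho8.1}--\eqref{eq-approxrho8.3} (i.e.\ the norm of the dilation $\mathbf{D}_r\colon A^2_1(\D)\to A^1(\D)$), and your pairing identity $\int_\Te|g_r|^2\diff s=\int_\D\bar\mu\,G'(r^2w)\,\diff A(w)$ is the content of Lemmas \ref{lem-basic1} and \ref{lem-basic2}. Where you diverge is the mechanism for the per-scale lower bound. The paper gets it for free from abstract duality: $\mathbf{D}_r^\ast=\mathbf{D}_r$ with respect to $\langle\cdot,\cdot\rangle_\D$, $(A^1)^\ast\cong\Pop L^\infty(\D)$ and $(A^2_1)^\ast\cong H^2_\ast(\D)$, so equality of an operator norm with its adjoint's norm gives \eqref{eq-approxrho8.3.1} with no structural information about the extremal symbol needed. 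Your route through the variational equation \eqref{eq-vareq1} and the claimed self-consistency $G_n'\approx\lambda_n h_n$ for $g_n=\Pop(h_n/|h_n|)$ is the weak point: \eqref{eq-vareq1} concerns the genuine packing extremizer and the \emph{local} projection $\Pop_r$ on $\D(0,r)$, and it does not yield that the derivative of $z\,\Pop(h_n/|h_n|)$ is proportional to $h_n$; as stated this step is unsubstantiated. It is also unnecessary: take $\mu_n$ to be the unimodular symbol of the dilate $(h_n)_{r_n}$ and bound $\|z(g_n)_{r_n}\|_{H^2}$ from below by pairing against the antiderivative of $h_n$ in the $\Te$-pairing (equivalently, run the paper's duality by hand); the mismatch between the weights $\log\frac1{|z|^2}$ and $1-|z|^2$ is absorbed into an $\mathrm{O}(1)$ term exactly as in the paper's comparison of $H^2_\ast$ with $H^2$.

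The second genuine issue is your ``soft'' resolution of the $\sup_\mu$/$\limsup_r$ interchange. Weak-$\ast$ compactness does not work here: for a fixed $r<1$ the functional $\mu\mapsto\|(\Pop\mu)_r\|_{H^2}$ is weak-$\ast$ continuous, but the near-extremality of $\mu_n$ lives only at its own scale $r_n$, and the later symbols in the sequence carry no information at the earlier scales; moreover the $\mu_n$ are rapidly oscillating unimodular functions whose weak-$\ast$ limit may well be $0$, so no lower semicontinuity argument can transfer the bound to the limit symbol. Only your constructive alternative is viable, namely splicing scale-specific symbols on nested hyperbolic annuli with controlled interaction -- which is precisely the ``generational construction of an approximately simultaneously extremal $\mu$ using successive annuli of hyperbolic width tending to infinity'' that the paper itself invokes and, like you, leaves without details. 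So on that final step you are at parity with the paper, provided you discard the weak-$\ast$ route; the genuine gaps to repair are the unjustified self-consistency claim (replace it by the duality argument above) and the reliance on \eqref{eq-vareq1}.
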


In the paper \cite{AIPP} by Astala, Ivrii, Per\"al\"a, and Prause, the estimate 
$\Sigma^2\ge0.87913$ was obtained. As a consequence 
of the inequality \eqref{eq-ineg:Sigmarho}, we obtain that  
$\rho_{\mathbb{H}}\le0.12087$. This is where the estimate 
from above of Theorem \ref{thm-mddhzp} comes from.
This estimate is much smaller than the value $1-\frac{\pi}{4}=0.214\ldots$
which is the expected value of the discrepancy density for an appropriately
tailored Gaussian Analytic Function (see Subsection \ref{subsec-stochhyper}). 

Intuitively, the approximately extremal polynomial $f$ for the definition
\eqref{eq-rhodef1.11} of the discrepancy density $\rho_{\mathbb{H}}$
should have its zeros as hyperbolically equidistributed as possible, with
a prescribed density. Since it stands to reason that we may model these
approximately minimizing polynomials by a single holomorphic function 
$f$ in the disk $\D$, we could try to look for $f$ which is a diffential
of order $1$ (or a character-diffential of the same order $1$), periodic
with respect to a Fuchsian group $\Gamma$ such that $\D/\Gamma$ is a
compact Riemann surface. The most natural choice would be to also ask that
the zeros of $f$ are located along a hyperbolic equilateral triangular lattice. 
For instance, we may compare with the analogous planar case the bound 
achieved by the unilateral triangular lattice is $\rho_\C\le0.061203\ldots$. 
However, the structure of hyperbolic lattices is more rigid than the 
corresponding planar one, and the relevant quantities are harder to evaluate.
  
\begin{rem}
McMullen's notion of asymptotic variance is very much related to Makarov's
modelling of Bloch functions as martingales \cite{Mak1}, \cite{Mak2}, 
\cite{Mak3}. Compare also with Lyons' approach \cite{Lyons} to understand 
Bloch functions as maps from hyperbolic Brownian motion to a planar Brownian 
motion (but for it, the speed of the local variance is variable but at least
bounded) \cite{Lyons}.
\end{rem}

We note in passing that in \cite{Hed2}, the related notion of 
\emph{asymptotic tail variance} was introduced.

\subsection{The quasiconformal integral means spectrum 
and the dimension of quasicircles}

For $0<k<1$, we consider the class ${\boldsymbol{\Sigma}}_k$ 
of normalized $k$-quasiconformal mappings 
$\psi:\C_\infty\to\C_\infty$, where $\C_\infty:=\C\cup\{\infty\}$ is the Riemann 
sphere, which preserve the point at infinity and are conformal in the
exterior disk $\D_e$. The normalization is such that the mapping has 
a convergent Laurent expansion of the form
\[
\psi(\zeta)=\zeta+b_0+b_1\zeta^{-1}+b_2\zeta^{-2}+\cdots,\qquad |\zeta|>1.
\] 
The \emph{integral means spectrum} for the function $h:=\log\psi'$ 
(which is defined in $\D_e$ only) is the function
\[
\beta_h(t):=\limsup_{R\to1^+}
\frac{\log\int_\Te |\e^{t h(R\zeta)}|\diff s(\zeta)}{\log\frac{R^2}{R^2-1}},
\qquad t\in\C.
\] 
The \emph{universal} integral means spectrum is obtained as 
$\mathrm{B}(k,t):=\sup_{\psi}\beta_h(t)$,
where $h=\log\psi'$ and $\psi$ ranges over $\boldsymbol{\Sigma}_k$. 
In \cite{Ivrii}, Ivrii obtains the following asymptotics for 
$\mathrm{B}(k,t)$.

\begin{thm}
{\rm(Ivrii)}
The universal integral means spectrum enjoys the asymptotics
\[
\lim_{k\to0^+}\lim_{t\to0}\frac{\mathrm{B}(k,t)}{k^2|t|^2}=
\frac{\Sigma^{\text{2}}}{4}.
\]
\label{thm-ivrii}
\end{thm}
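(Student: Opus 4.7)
The plan is to bridge the two sides of the equality via Ahlfors--Bers linearization on the quasiconformal side and a quadratic Taylor expansion of the integral means on the Bergman side. For $\psi\in\boldsymbol{\Sigma}_k$ with Beltrami coefficient $\mu=\bar\partial\psi/\partial\psi$ supported in $\D$ and $\|\mu\|_{L^\infty}\le k$, write $\mu=k\nu$ with $\|\nu\|_{L^\infty(\D)}\le 1$ and pass to the holomorphic family $\psi_{k\nu}$. The Ahlfors--Bers formula gives
\[
\psi'_{k\nu}(\zeta)=1+k\,\Sop[\nu 1_\D](\zeta)+\Ordo(k^2),\qquad \zeta\in\D_e,
\]
where $\Sop$ is the Beurling transform. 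Transferring $h:=\log\psi'$ from $\D_e$ to $\D$ by $\tilde h(z):=\overline{h(1/\bar z)}$, a short residue calculation based on $\Sop[\nu 1_\D](1/\bar z)=-\bar z^{\,2}\,\overline{\Pop\bar\nu(z)}$ gives the linearization
\[
\tilde h(z)=-k\,z^2\,\Pop\bar\nu(z)+\Ordo(k^2),\qquad z\in\D,
\]
so that $\tilde h/k$ lies, to leading order, in the unit ball of $\Pop L^\infty(\D)$.

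\textbf{Quadratic expansion of $\beta_h(t)$.} With $r:=1/R$, I would rewrite $|e^{t h(Rw)}|=e^{\re(\bar t\,\tilde h_r(w))}$ for $w\in\Te$ and use $(\re w)^2=\tfrac12|w|^2+\tfrac12\re(w^2)$ to get
\[
\int_\Te|e^{\bar t\,\tilde h_r}|\,\ds=1+\frac{|t|^2}{4}\int_\Te|\tilde h_r|^2\,\ds+\frac{1}{4}\re\!\Big(\bar t^{\,2}\!\int_\Te\tilde h_r^{\,2}\,\ds\Big)+\Ordo\!\big(|t|^3\|\tilde h_r\|_3^{\,3}\big).
\]
Because $\tilde h(0)=0$, Parseval forces $\int_\Te\tilde h_r^{\,2}\,\ds$ to remain bounded in $r$, whereas $\int_\Te|\tilde h_r|^2\,\ds\asymp\sigma^2(\tilde h)\log(1/(1-r^2))$ by definition of McMullen's variance. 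Dividing by $\log(R^2/(R^2-1))=\log(1/(1-r^2))$ and taking $r\to 1^-$ yields the clean asymptotic
\[
\beta_h(t)=\tfrac14|t|^2\,\sigma^2(\tilde h)+\ordo(|t|^2)\qquad(t\to 0).
\]

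\textbf{Combining and supremizing.} Since $\sigma^2$ is a quadratic functional and the multiplier $z^2$ has modulus one on $\Te$, $\sigma^2(\tilde h)=k^2\sigma^2(\Pop\bar\nu)+\Ordo(k^3)$. Substituting and taking the supremum over $\psi\in\boldsymbol{\Sigma}_k$ — which, by Ahlfors--Bers, is equivalent to taking the supremum over $\nu\in L^\infty(\D)$ with $\|\nu\|_\infty\le 1$ — one obtains
\[
\frac{\mathrm{B}(k,t)}{k^2|t|^2}=\frac14\sup_{\|\nu\|_\infty\le 1}\sigma^2(\Pop\bar\nu)+\ordo(1)=\frac{\Sigma^2}{4}+\ordo(1)
\]
as $k,t\to 0^+$, the last equality holding by the definition of $\Sigma^2$ from Section~1.4 together with invariance of the supremum under $\nu\mapsto\bar\nu$.

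\textbf{Main obstacle.} The calculations above are formal, and the genuine content of Ivrii's theorem is making them uniform. The cubic Taylor remainder $\Ordo(|t|^3\|\tilde h_r\|_3^{\,3})$ and the $\Ordo(k^2)$ remainder in the Ahlfors--Bers linearization must both be controlled uniformly in $r\in(0,1)$, $k\in(0,k_0]$, and $\psi\in\boldsymbol{\Sigma}_k$, so that the four limiting operations $\sup_\psi$, $\limsup_{R\to 1^+}$, $\lim_{t\to 0}$, and $\lim_{k\to 0^+}$ may legitimately be exchanged. This calls on Astala-type sharp area distortion, Slodkowski holomorphic motion extensions, and uniform $L^p$-bounds on $\log\psi'$ in $\boldsymbol{\Sigma}_k$ for small $k$; assembling these uniform estimates is the delicate heart of the argument and the reason the theorem is nontrivial despite the clean conceptual picture.
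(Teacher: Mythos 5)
There is a genuine gap, and it sits exactly where you place your ``clean asymptotic.'' Note first that the paper does not prove this theorem at all: it is quoted from Ivrii's work, and the text only sketches the key ingredients (the linearization $\frac1k\log\psi'\approx\Sop\mu$, the ``box lemma'' localizing $\int_\Te|g_r|^2\diff s$ to a hyperbolic disk of fixed radius, and the Lipschitz property of Bloch functions). Your linearization step is consistent with that commentary, but the step
$\beta_h(t)=\tfrac14|t|^2\sigma^2(\tilde h)+\ordo(|t|^2)$ obtained from a second-order Taylor expansion is not merely ``formal with uniformity to be supplied later''; as written it fails. In the definition of $\beta_h(t)$ the limit $R\to1^+$ is taken \emph{first}, with $t$ fixed. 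In that regime the putative main term $\tfrac{|t|^2}{4}\int_\Te|\tilde h_r|^2\diff s\asymp\tfrac{|t|^2}{4}\sigma^2\log\tfrac1{1-r^2}$ tends to infinity, so you may not replace $\log\int_\Te|\e^{\bar t\tilde h_r}|\diff s$ by $\int_\Te|\e^{\bar t\tilde h_r}|\diff s-1$; worse, for a Bloch function the cubic remainder is of size $|t|^3\|\tilde h_r\|_{L^3(\Te)}^3\asymp|t|^3\bigl(\log\tfrac1{1-r^2}\bigr)^{3/2}$, which after division by $\log\tfrac1{1-r^2}$ \emph{diverges} as $r\to1^-$ and swamps the claimed main term. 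The true relation between $\sigma^2(\tilde h)$ and $\beta_h(t)$ is of exponential (lognormal/large-deviation) type: one needs sharp exponential integrability of Bloch functions with constant governed by the asymptotic variance (this is the content of McMullen's and Makarov's estimates, of the asymptotic tail variance in \cite{Hed2}, and of Ivrii's box lemma combined with the Bloch Lipschitz property). No amount of uniform control of the $\Ordo(|t|^3)$ and $\Ordo(k^2)$ remainders rescues the quadratic expansion, because the expansion is wrong already for a single fixed $h$.

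A second, related gap is the lower bound. $\mathrm{B}(k,t)$ is a supremum, and ``taking the supremum'' in your final display only transfers an upper-type expansion; to show $\mathrm{B}(k,t)\ge(\tfrac{\Sigma^2}{4}-\ordo(1))k^2|t|^2$ one must produce, for each $\epsilon>0$, Beltrami coefficients whose associated $\log\psi'$ actually realizes integral means growth governed by $\sigma^2\ge\Sigma^2-\epsilon$ in the exponential regime; this is precisely what Ivrii's localization (control from above and below ``more or less simultaneously,'' in the paper's words) is designed to deliver, and it does not follow from the Ahlfors--Bers linearization plus abstract sup-taking. So your proposal correctly identifies the standard reduction $\frac1k\log\psi'\rightsquigarrow\Pop\nu$, but the analytic heart of the theorem --- converting asymptotic variance into second-order asymptotics of the \emph{universal} integral means spectrum, in both directions --- is missing, and the specific mechanism you propose to bridge it cannot work.
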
 

Here, $\Sigma^{\text{2}}$ is the universal constant which appears in 
\eqref{eq-ineg:Sigmarho}, so that $\Sigma^{\text{2}}\le 1-\rho_{\mathbb{H}}<1$. 
Hence a combination of Theorems \ref{cor-MAIN1} and \ref{thm-ivrii} 
refutes the general conjecture to the effect that 
$\mathrm{B}(k,t)=\frac14 k^2|t|^2$ for real $t$ with $|t|\le2/k$
\cite{jones}, \cite{PS}. 

We now comment on Ivrii's proof of his theorem.
It is important for the proof that for small $k$, the function 
$\frac{1}{k}\log \psi'$ can be modelled by
$\Sop\mu$ for some $\mu\in L^\infty(\D)$ with $\|\mu\|_{L^\infty(\D)}$, where
$\Sop$ denotes the Beurling transform
\[
\Sop\mu(z)=-\pv\int_{\D}\frac{\mu(w)}{(z-w)^2}\diff A(w).
\]
Moreover, after an inversion of the plane, $\Sop\mu$ essentially becomes 
$\Pop\mu$. While this is standard technology in quasiconformal theory, the 
first important observation Ivrii makes is the ``box lemma'', which says that
for $g=\Pop\mu$ with $\|\mu\|_{L^\infty}\le1$, the control of the right-hand 
side integral in 
\[
\int_\Te |g(r\zeta)|^2\diff s(\zeta)=|g(0)|^2+r^2\int_{\D}|g'(r\zeta)|^2
\log\frac{1}{|\zeta|^2}\diff A(\zeta)
\]
can be localized to a hyperbolic disk of large fixed radius instead. 
This is a kind of weak control of square function type (compare with e.g.
Ba\~nuelos \cite{Ban}), which tells us we are in the right ballpark. 
A clever combination with the Lipschitz property of Bloch functions 
\cite{HKZ} then gives the control from above and below, more or less
simultaneously.

Ivrii actually obtains slightly better control than stated above. In any 
case, he also derives the following dimension expansion
via the Legendre transform formalism connecting the dimension and integral
means spectra (see, e.g., \cite{Mak2}, \cite{Mak3}, and \cite{Pom}, p. 241). 

\begin{cor}
{\rm(Ivrii)}
For any $\epsilon>0$, the maximal Minkowski (or Hausdorff) dimension $D(k)$ 
of a $k$-quasicircle has the asymptotic expansion 
\[
D(k)=1+\Sigma^2k^2+\mathrm{O}(k^{8/3-\epsilon})\quad\text{as}\,\,\,\,k\to0^+.
\]
\label{cor-ivrii}
\end{cor}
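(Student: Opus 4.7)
The plan is to derive Corollary \ref{cor-ivrii} from Ivrii's Theorem \ref{thm-ivrii} by invoking the multifractal/Legendre-transform formalism of Makarov and Pommerenke that relates the universal integral means spectrum $\mathrm{B}(k,\cdot)$ to the maximal boundary dimension $D(k)$ of a $k$-quasicircle. The key input is the quadratic asymptotic
\[
\mathrm{B}(k,t) = \tfrac{1}{4}\Sigma^{2} k^{2} |t|^{2}\bigl(1+\mathrm{o}(1)\bigr)
\]
of Theorem \ref{thm-ivrii}, together with an effective version of its remainder.

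First, I would appeal to the standard formula that expresses the Minkowski dimension of the quasicircle $\psi(\Te)$, for $\psi\in\boldsymbol{\Sigma}_{k}$, in terms of the integral means spectrum of $\log\psi'$ on $\D_{e}$; see \cite{Mak2}, \cite{Mak3}, and \cite{Pom}, p.~241. Taking the supremum over $\boldsymbol{\Sigma}_{k}$ converts this into a Legendre-type variational problem for $D(k)-1$ in which the input is $\mathrm{B}(k,\cdot)$ and the effective optimizer $t^{*}(k)$ tends to zero as $k\to0^{+}$ at a controlled rate, thereby remaining in the regime where Theorem \ref{thm-ivrii} is applicable.

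Second, I would substitute the quadratic asymptotic into this variational formula. The unperturbed quadratic model $\tfrac{1}{4}\Sigma^{2}k^{2}t^{2}$ produces, through the Legendre arithmetic, the leading coefficient $\Sigma^{2}$ in $D(k)-1$; in the extremal case $\Sigma^{2}=1$ one recovers Astala's conjectured shape $D(k)-1=k^{2}$, which serves as a sanity check that the coefficient $\tfrac{1}{4}$ in $\mathrm{B}$ is being converted to $1$ in $D(k)-1$ correctly. To reach the stated error $\mathrm{O}(k^{5/2})$, I would feed a quantitative refinement of Theorem \ref{thm-ivrii} --- effective control on $\mathrm{B}(k,t)-\tfrac{1}{4}\Sigma^{2}k^{2}t^{2}$ on a $k$-dependent neighborhood of the origin in $t$ --- into the variational formula and evaluate at $t=t^{*}(k)$, which is where the optimum is realized.

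The main obstacle I expect is precisely this quantitative refinement: Theorem \ref{thm-ivrii} is stated only as a double-limit assertion and so, on its face, produces no explicit rate. Obtaining the remainder $\mathrm{O}(k^{5/2})$ requires going inside Ivrii's argument and tracking the dependence on $k$ throughout the box-lemma localization, the Lipschitz estimate for Bloch functions, and the comparison between $\tfrac{1}{k}\log\psi'$ and $\Pop\mu$ via the Beurling transform $\Sop$ (together with the planar inversion that converts $\Sop$ into $\Pop$). A subsidiary and more routine point is the upgrade from the Minkowski expansion to a Hausdorff expansion: the upper bound $D(k)\leq1+\Sigma^{2}k^{2}+\mathrm{O}(k^{5/2})$ applies to Minkowski and a fortiori to Hausdorff, while Smirnov-type constructions that saturate $\Sigma^{2}$ furnish Hausdorff examples reaching the same main term, which yields a two-sided expansion valid for both dimensions.
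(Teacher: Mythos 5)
Your proposal follows essentially the same route the paper indicates: the corollary is attributed to Ivrii, who derives it from the quantitative version of Theorem \ref{thm-ivrii} via the Legendre transform formalism linking the integral means and dimension spectra (the paper explicitly notes that Ivrii obtains better control than the stated double limit, which is exactly the refinement you identify as necessary for the $\mathrm{O}(k^{5/2})$ remainder). Your honest flagging of that quantitative gap, and of the Minkowski-versus-Hausdorff point, matches how the paper defers these details to \cite{Ivrii}.
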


Here, a \emph{$k$-quasicircle} is simply the image of the unit circle $\Te$ 
under a $k$-quasiconformal mapping of the Riemann sphere $\C_\infty$.
In particular, Astala's well-known conjecture $D(k)=1+k^2$ is incorrect. 
In fact, Prause made the observation that $D(k)<1+k^2$ holds for every $0<k<1$,
based on a combination of Corollary \ref{cor-ivrii} and the methods developed
by Prause and Smirnov \cite{Sm}, \cite{PS}. It might be conjectured that the 
error term in the corollary, $\mathrm{O}(k^{8/3})$, may be improved to 
$\mathrm{O}(k^{3})$.

\subsection{Structure of the paper}

In Section \ref{sec-identities}, some basic identities are mentioned, which are
based on Green's formula as well an explicit calculation involving 
dilates of harmonic functions.
In Section \ref{sec-dilation}, we explore dilational Carleman reverse 
isoperimetry in a Bergman space setting, which later turns out to be closely
connected with the calculation of the density $\rho_{\mathbb{H}}$. 
In Section \ref{sec-calcasvar}, we begin with a seemingly elementary but 
powerful Hilbert space lemma, and then apply it repeatedly in the proofs 
of Theorems \ref{cor-MAIN1} and \ref{thm-equality}.  
In Section \ref{sec-appendix}, we supply the proof of Theorem 
\ref{thm-mddhzp} by first obtaining a local statement, which is then made 
M\"obius invariant, and finally, the estimate is obtained by integration over
the hyperbolic area measure.  
In Section \ref{sec-imprCS}, we begin the semi-expositary part of the paper, 
where we introduce geometric zero packing in the context of the plane and the
hyperbolic plane. We also explore various relations with Gaussian analytic
functions (GAFs) as well as with certain tilings of the plane and the 
hyperbolic plane, respectively.  In Section \ref{sec-beta}, more general
exponents $\beta$ are considered, and a conjecture is made for planar zero
packing which we attribute to Abrikosov. A relation with the LLL-equation 
is mentioned, which is the Bargmann-Fock analogue of the cubic Szeg\H{o}
equation. Finally, in Section \ref{sec-zeropack:logmono}, it is explained
how to interpret the general zero packing problem for compact Riemann surfaces. 
The solution is expressed in terms of what we have decided to call 
``logarithmic monopoles'' but is more commonly referred to as 
``Green functions'' in the literature. The latter is an abuse of notation 
since classical Green functions are not available on compact surfaces (without
boundary). In addition it is explained how the geometric zero packing problem
differs from the classical Fekete configuration problem, in that the Fekete
problem involves Dirichlet energy, while geometric zero packing instead 
involves Bergman energy (in the limit as $\beta\to0$). Finally, in Section
\ref{sec-Feketegeomzero} a rather speculative connection is drawn using
weighted heat flow to connect geometric zero packing on compact surfaces with
$\beta$-deformed Fekete-type problems.

\subsection{Acknowledgements}
I would like to thank Oleg Ivrii and Aron Wennman for reading carefully 
versions of this manuscript, Peter Zograf for helping out with 
character-modular forms, or, more geometrically, sections of holomorphic 
line bundles on Riemann surfaces. Special thanks are due to Aron Wennman for 
helping out with programming. 
In addition, I would like to thank Kari Astala, Alexander 
Borichev, Michael Benedicks, Douglas Lundholm, Istv\'an Prause, and Aron 
Wennman for several valuable conversations. Finally, I should thank the 
referee for several useful remarks.

\section{Identities for dilates of harmonic functions}
\label{sec-identities}

\subsection{Identities involving dilates of harmonic functions}
The following identity interchanges dilations, and although elementary, it 
is quite important. We write $f_r$ and $g_r$ for the dilates $f_r(z):=f(rz)$
and $g_r(z):=g(rz)$, respectively. 

\begin{lem}
Suppose $f,g:\D\to\C$ are two harmonic functions, which are are 
area-integrable: $f,g\in L^1(\D)$. Then we have that
\[
\langle f_r,g\rangle_\D=\langle f,g_r\rangle_\D,
\qquad 0<r<1.
\] 
\label{lem-basic1}
\end{lem}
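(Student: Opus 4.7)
My approach is to switch to polar coordinates and exploit orthogonality of $\{e^{\imag n\theta}\}$ in the angular variable. Since $f$ and $g$ are harmonic on $\D$, they admit the standard decompositions
\[
f(z)=\sum_{n\geq 0}a_n z^n+\sum_{n\geq 1}a_{-n}\bar z^n,\qquad
g(z)=\sum_{n\geq 0}b_n z^n+\sum_{n\geq 1}b_{-n}\bar z^n,
\]
with uniform convergence on every compact subset of $\D$, and hence $f_r(z)=\sum a_n r^n z^n+\sum a_{-n}r^n\bar z^n$, with a similar formula for $g_r$.

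First I verify both pairings make sense. The dilate $f_r$ is harmonic on $\D(0,1/r)\supset\bar\D$ and hence bounded on $\bar\D$, so $f_r\bar g\in L^1(\D)$ given the hypothesis $g\in L^1(\D)$; likewise for $f\bar g_r$. Setting $z=se^{\imag\theta}$ and applying Fubini,
\[
\langle f_r,g\rangle_\D=\int_0^1\bigg(\int_0^{2\pi}f(rse^{\imag\theta})\overline{g(se^{\imag\theta})}\,\frac{\diff\theta}{2\pi}\bigg)2s\,\diff s,
\]
and similarly for $\langle f,g_r\rangle_\D$. For each fixed $s\in(0,1)$, the factor carrying the dilation (either $f(rse^{\imag\theta})$ or $g(rse^{\imag\theta})$) is given by a uniformly convergent trigonometric series in $\theta$, which makes term-by-term integration legitimate against the $L^1[0,2\pi]$ partner. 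The orthogonality $\int_0^{2\pi}e^{\imag(n-m)\theta}\diff\theta=2\pi\delta_{nm}$ then collapses both angular integrals to the same expression
\[
\sum_{n\geq 0}a_n\bar b_n\,r^n s^{2n}+\sum_{n\geq 1}a_{-n}\bar b_{-n}\,r^n s^{2n},
\]
since the combined power $r^{|n|}s^{2|n|}$ is symmetric under moving the dilation factor from one side of the pairing to the other. Integrating against $2s\,\diff s$ over $(0,1)$ then produces $\sum_{n\geq 0}\tfrac{a_n\bar b_n r^n}{n+1}+\sum_{n\geq 1}\tfrac{a_{-n}\bar b_{-n} r^n}{n+1}$ on both sides, yielding the identity.

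The only technical obstacle is the sum-integral interchange at each radius $s$, but this is mild: the dilated factor is holomorphic (plus antiholomorphic) on a disk strictly larger than $|z|\leq s$, so its series converges uniformly in $\theta$, which justifies interchange against the partner, which by Fubini on $g\in L^1(\D)$ lies in $L^1[0,2\pi]$ for almost every $s$. No $L^2$ or $H^p$ machinery is needed, and the symmetry in $(r,s)$ of the exponent $r^{|n|}s^{2|n|}$ is the single algebraic fact doing all the work.
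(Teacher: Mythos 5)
Your argument is correct, and in fact the paper offers no proof of this lemma at all: it simply cites Lemma 5.1.1 of \cite{Hed2}, so your write-up supplies a self-contained justification of exactly the kind that underlies the cited result. The structure is sound: boundedness of the dilate $f_r$ on $\bar\D$ together with $g\in L^1(\D)$ legitimizes Fubini; for each fixed $s<1$ both factors restricted to the relevant circles are given by uniformly convergent harmonic expansions, so the angular integral collapses by orthogonality to the same expression $\sum_{n\ge0}a_n\bar b_n r^ns^{2n}+\sum_{n\ge1}a_{-n}\bar b_{-n}r^ns^{2n}$ whichever factor carries the dilation; and this symmetry of $r^{|n|}s^{2|n|}$ is indeed the whole point. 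One small remark on the final step: to conclude the identity you do not need to exchange the sum over $n$ with the radial integration at all --- once the two angular integrals agree for (almost) every $s$, integrating the common radial integrand against $2s\,\diff s$ finishes the proof. If you do want the explicit closed form $\sum_{n\ge0}\frac{a_n\bar b_n r^n}{n+1}+\sum_{n\ge1}\frac{a_{-n}\bar b_{-n}r^n}{n+1}$, that interchange should be justified, e.g.\ by noting that $|a_n|s^n\le\frac{1}{2\pi}\int_0^{2\pi}|f(s\e^{\imag\theta})|\,\diff\theta$ integrated against $2s\,\diff s$ gives $|a_n|=\Ordo(n)\,\|f\|_{L^1(\D)}$, and likewise for $b_n$, so the double series is absolutely convergent for $r<1$ and Tonelli applies; as written, that last sentence is the only unsupported step, and it is harmless since the identity does not depend on it.
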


This is Lemma 5.1.1 in \cite{Hed2}.
We also need the following identity.

\begin{lem}
Suppose $g,h:\D\to\C$ are functions, where $g$ is holomorphic
and $h$ is harmonic. 
If $g\in L^1(\D)$ and $h$ is the Poisson integral of a function in $L^1(\Te)$,
then we have that
\[
\langle z g_r,h\rangle_\Te=\langle g,(\partial h)_r\rangle_\D,
\]
where we write $z$ for the coordinate function $z(\zeta)=\zeta$. 
\label{lem-basic2}
\end{lem}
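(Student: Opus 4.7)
The plan is to prove the identity by expanding both sides in Taylor and Fourier series, matching coefficient-by-coefficient. Since the dilation parameter $r$ is strictly less than $1$, all series will converge absolutely with geometric speed, and the orthogonality relations on $\Te$ and $\D$ will do all the work. First I verify that the two pairings are well-defined: because the $\partial$-derivative of a harmonic function is holomorphic, $\partial h$ is holomorphic on $\D$, so $(\partial h)_r$ is bounded on a neighborhood of $\overline{\D}$, and combined with $g\in L^1(\D)$ the right-hand pairing is a finite integral. Similarly $g_r$ is bounded on $\overline{\D}$, and paired with the boundary datum $h^*\in L^1(\Te)$ the left-hand pairing is finite.

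Next, I expand $g(z)=\sum_{n\ge0}a_nz^n$, and denote by $c_k$, $k\in\Z$, the Fourier coefficients of $h^*$, so that $h(z)=\sum_{k\ge0}c_kz^k+\sum_{k\ge1}c_{-k}\bar z^k$ by harmonicity and $\partial h(z)=\sum_{k\ge1}kc_kz^{k-1}$. Using the circle orthogonality $\int_\Te\zeta^m\diff s(\zeta)=\delta_{m,0}$, the left-hand side telescopes to
\[
\langle zg_r,h\rangle_\Te=\sum_{n\ge0}a_nr^n\,\overline{c_{n+1}}.
\]
Using the disk orthogonality $\int_\D w^n\bar w^m\diff A(w)=(n+1)^{-1}\delta_{n,m}$ together with termwise integration, the right-hand side collapses likewise to
\[
\langle g,(\partial h)_r\rangle_\D=\sum_{k\ge1}kr^{k-1}\overline{c_k}\cdot\frac{a_{k-1}}{k}=\sum_{n\ge0}a_nr^n\,\overline{c_{n+1}},
\]
matching the left-hand side.

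The only real bookkeeping obstacle is justifying the termwise integration on the area side. This is supplied by the uniform bound $\lvert(\partial h)_r(w)\rvert\le\sum_{k\ge1}k\lvert c_k\rvert r^{k-1}\le\|h^*\|_{L^1(\Te)}/(1-r)^2$, so the partial sums of $(\partial h)_r$ are dominated by a constant and may be integrated against $g\in L^1(\D)$ by dominated convergence; the termwise step on the circle side is even easier since $g_r$ converges uniformly on $\Te$. One could alternatively route through the complex Green's formula $\int_\D\bar\partial F\diff A=\int_\Te\zeta F(\zeta)\diff s(\zeta)$, writing $\overline{(\partial h)_r}=r^{-1}\bar\partial\overline{h_r}$ and exploiting $\bar\partial g=0$ to reduce the right-hand pairing to a boundary integral, in close analogy with Lemma \ref{lem-basic1}; but because $g$ need not possess boundary traces, this route requires regularization (replacing $g$ by $g_{r'}$ and letting $r'\to1^-$), so the Fourier expansion above is the cleaner presentation.
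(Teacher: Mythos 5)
Your proof is correct. Note that the paper itself offers no argument for this lemma: it simply cites Lemma 5.2.1 of \cite{Hed2}, so there is no in-paper proof to compare against; your coefficient-matching argument supplies a complete, self-contained verification. The computation is sound under the paper's normalizations ($\int_\Te\zeta^m\diff s=\delta_{m,0}$, $\int_\D w^n\bar w^m\diff A=\delta_{n,m}/(n+1)$), the interpretation of $h$ on $\Te$ as the a.e.\ radial boundary values of the Poisson integral is the intended one, and both sides indeed reduce to $\sum_{n\ge0}a_nr^n\overline{c_{n+1}}$, which converges absolutely since $|a_n|\le(n+1)\|g\|_{L^1(\D)}$ and $|c_{n+1}|\le\|h^*\|_{L^1(\Te)}$. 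Your justification of the termwise step on the area side (partial sums of $(\partial h)_r$ uniformly bounded by $\|h^*\|_{L^1(\Te)}/(1-r)^2$, then dominated convergence against $|g|\in L^1$) is exactly what is needed; the only point left implicit is the identity $\int_\D g\,\bar w^{k-1}\diff A=a_{k-1}/k$ for $g\in A^1(\D)$, which requires its own (routine) passage to the limit over dilates or over disks $\D(0,t)$, $t\to1^-$, since the Taylor series of $g$ need not converge in $L^1(\D)$ -- worth a sentence, but standard. Your closing remark is also apt: the alternative route via the complex Green's formula, after replacing $g$ by $g_{r'}$ and letting $r'\to1^-$, mirrors the dilation devices the paper uses elsewhere (e.g.\ in \eqref{eq-meaningf1} and Lemma \ref{lem-basic1}), whereas your Fourier expansion avoids any appeal to boundary traces of $g$.
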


This is Lemma 5.2.1 in \cite{Hed2}.

\section{Dilational reverse isoperimetry for a  
Bergman space}
\label{sec-dilation}

\subsection{Carleman's isoperimetric inequality,
dilates, and the $L^1$ Bergman space}

The classical isoperimetric inequality says that the area enclosed by 
a closed loop of length $L$ is at most $L^2/(4\pi)$. Torsten Carleman 
(see \cite{Car}, \cite{Str}) found a nice analytical approach to this fact, 
which gave the estimate
\begin{equation}
\|f\|_{A^2(\D)}\le\|f\|_{H^1(\D)},\qquad f\in H^1(\D). 
\label{eq-isoper1}
\end{equation}
Here, $H^1(\D)$ denotes the classical Hardy space, as in Subsection 
\ref{subsec-StandBerg}. 
As for \eqref{eq-isoper1}, the geometrically relevant case is when $f$ 
is the derivative of the conformal mapping from disk $\D$ to the domain 
enclosed by the loop.
There is of course no converse to the isoperimetrical inequality, since 
for a given enclosed area, the length of the boundary may be infinite.
However, if the boundary curve is regularized by replacing it with a
level curve of the Green function, the reverse problem starts to make
sense. We will not need here the appropriate regularized reverse version of
\eqref{eq-isoper1}, but instead the 
%suitable
analogue where the Hardy space $H^1(\D)$ is replaced by the corresponding 
Bergman space $A^1(\D)$ of area-integrable holomorphic functions.

Since $H^1(\D)$ may be thought of as an appropriate limit of the weighted 
Bergman spaces $A^p_\alpha(\D)$ as $\alpha\to-1^+$, it stands to reason that
Carleman's estimate \eqref{eq-isoper1} might be part of a more general 
estimate comparing the norm in $A^p_\alpha(\D)$ with that of 
$A^{2p}_{\alpha+1}(\D)$. We shall be
interested in obtaining a reverse inequality after dilation, 
with $p=1$ and $\alpha=0$: Is it true that, for some positive constant 
$C_2(r)$,
\begin{equation}
\|f_r\|_{A^1(\D)}\le C_2(r)\|f\|_{A^2_1(\D)},\qquad f\in A^2_1(\D)?
\label{eq-isoper3}
\end{equation}
Here, $f_r(\zeta)=f(r\zeta)$ and $0<r<1$. The question at hand is to obtain 
in explicit form, or at least to estimate from above, the optimal constant 
$C_2(r)$, for $0<r<1$. By the Cauchy-Schwarz inequality, we have that
\begin{multline}
\|f_r\|_{A^1(\D)}=\int_\D|f(r\zeta)|\diff A(\zeta)=\frac{1}{r^2}\int_{\D(0,r)}
|f(z)|\diff A(z)
\\
\le\frac{1}{r^2}\Bigg(\int_{\D(0,r)}\frac{\diff A(z)}{1-|z|^2}
\Bigg)^{1/2}\Bigg(\int_{\D(0,r)}|f(z)|^2(1-|z|^2)\diff A(z)
\Bigg)^{1/2}
\\
=\frac{1}{r^2}\bigg(\log\frac{1}{1-r^2}\bigg)^{1/2}
\Bigg(\int_{\D(0,r)}|f(z)|^2(1-|z|^2)\diff A(z)\Bigg)^{1/2}
\\
\le\frac{1}{r^2}\bigg(\log\frac{1}{1-r^2}\bigg)^{1/2}
\Bigg(\int_{\D}|f(z)|^2(1-|z|^2)\diff A(z)\Bigg)^{1/2}
=\frac{1}{r^2}\bigg(\log\frac{1}{1-r^2}\bigg)^{1/2}
\|f\|_{A^2_1(\D)}.
\label{eq-isoper3.01}
\end{multline}
This immediately shows that the optimal constant in \eqref{eq-isoper3} is 
at most
\begin{equation}
C_2(r)\le \frac{1}{r^2}\bigg(\log\frac{1}{1-r^2}\bigg)^{1/2},\qquad 0<r<1.
\label{eq-isoper3.1}
\end{equation}
We intend to improve this estimate. 

\section{Calculation of asymptotic variance via 
hyperbolic zero packing}
\label{sec-calcasvar}

\subsection{Suboptimality of the Cauchy-Schwarz inequality}

We need to analyze the degree of suboptimality in the Cauchy-Schwarz 
inequality in various situations. To this end, the following lemma is helpful.

\begin{lem}
If $\mathcal{H}$ is an $\R$-linear Hilbert space, the following three 
conditions are equivalent for two given vectors $u,v\in\mathcal{H}$ and a 
real $\theta$ with $0\le\theta\le1$:
\smallskip

\noindent{\rm{(a)}}
$\forall c\in\R:\,\,\|u-cv\|_{\mathcal{H}}\ge\theta\|u\|_{\mathcal{H}}$,  
\smallskip

\noindent{\rm{(b)}}
$\forall c\in\R:\,\,\|u-cv\|_{\mathcal{H}}\ge|c|\theta\|v\|_{\mathcal{H}}$, and 
\smallskip

\noindent{\rm{(c)}} $|\langle u,v\rangle_{\mathcal{H}}|\le(1-\theta^2)^{1/2}
\|u\|_{\mathcal{H}}\|v\|_{\mathcal{H}}$.
\label{lem-basic0}
\end{lem}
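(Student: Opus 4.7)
The key observation is that all three conditions can be reduced to the nonnegativity of a single quadratic polynomial in the real variable $c$, so the plan is to expand $\|u-cv\|_{\mathcal{H}}^2$ and read off the conditions from elementary calculus. First, I would dispose of the trivial case $v=0$: in that case (a), (b), (c) all reduce to tautologies ($\|u\|_{\mathcal{H}}\ge\theta\|u\|_{\mathcal{H}}$, $\|u\|_{\mathcal{H}}\ge0$, and $0\le0$), so I may henceforth assume $v\ne0$.

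With $v\ne0$, expanding the inner product gives
\[
\|u-cv\|_{\mathcal{H}}^2=\|u\|_{\mathcal{H}}^2-2c\,\langle u,v\rangle_{\mathcal{H}}+c^2\|v\|_{\mathcal{H}}^2,\qquad c\in\R.
\]
For the implication (a)$\Leftrightarrow$(c), I would rewrite (a) as the requirement that the quadratic
\[
q(c):=(1-\theta^2)\|u\|_{\mathcal{H}}^2-2c\,\langle u,v\rangle_{\mathcal{H}}+c^2\|v\|_{\mathcal{H}}^2
\]
is nonnegative on $\R$. Since $\|v\|_{\mathcal{H}}^2>0$, the minimum of $q$ is attained at $c^\ast=\langle u,v\rangle_{\mathcal{H}}/\|v\|_{\mathcal{H}}^2$ and equals
\[
q(c^\ast)=(1-\theta^2)\|u\|_{\mathcal{H}}^2-\frac{\langle u,v\rangle_{\mathcal{H}}^2}{\|v\|_{\mathcal{H}}^2},
\]
so nonnegativity of $q(c^\ast)$ is precisely $\langle u,v\rangle_{\mathcal{H}}^2\le(1-\theta^2)\|u\|_{\mathcal{H}}^2\|v\|_{\mathcal{H}}^2$, i.e.\ condition (c).

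For the equivalence (b)$\Leftrightarrow$(c), I would rewrite (b) as
\[
p(c):=\|u\|_{\mathcal{H}}^2-2c\,\langle u,v\rangle_{\mathcal{H}}+c^2(1-\theta^2)\|v\|_{\mathcal{H}}^2\ge0,\qquad\forall c\in\R.
\]
If $\theta<1$, then the leading coefficient $(1-\theta^2)\|v\|_{\mathcal{H}}^2$ is strictly positive, so $p\ge0$ on $\R$ is equivalent to the discriminant being $\le0$, giving again $\langle u,v\rangle_{\mathcal{H}}^2\le(1-\theta^2)\|u\|_{\mathcal{H}}^2\|v\|_{\mathcal{H}}^2$. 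The only real subtlety is the degenerate case $\theta=1$, in which $p$ becomes affine in $c$; affine nonnegativity on all of $\R$ forces $\langle u,v\rangle_{\mathcal{H}}=0$, which matches (c) in the $\theta=1$ case since then $(1-\theta^2)^{1/2}=0$. This degenerate endpoint is the only place where one must pause; apart from it, the proof is a purely computational exercise with no real obstacle.
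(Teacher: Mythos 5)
Your proof is correct. For (a)$\Leftrightarrow$(c) you do exactly what the paper does: expand $\|u-cv\|_{\mathcal{H}}^2$, minimize the quadratic in $c$ at $c^\ast=\langle u,v\rangle_{\mathcal{H}}/\|v\|_{\mathcal{H}}^2$, and read off the equivalence. Where you diverge is the (b)$\Leftrightarrow$(c) step: you treat it as a second quadratic (now with leading coefficient $(1-\theta^2)\|v\|_{\mathcal{H}}^2$) and argue via the discriminant, which forces you to handle the degenerate endpoint $\theta=1$ separately as an affine inequality --- a point you correctly flag and resolve. The paper instead avoids any new computation and any casework by a symmetry trick: for $c\ne0$ it substitutes $c'=1/c$, so that (b) becomes (a) with the roles of $u$ and $v$ interchanged, and since condition (c) is symmetric under this interchange, (b)$\Leftrightarrow$(c) follows at once from the already-proved (a)$\Leftrightarrow$(c). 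Your route is slightly more computational but entirely self-contained and uniform in spirit (everything is a quadratic in $c$); the paper's route is shorter and makes the structural reason for the equivalence --- the $u\leftrightarrow v$ symmetry of (c) --- explicit. Either argument is a complete and valid proof of the lemma.
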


\begin{proof}
If $v=0$, all the three conditions are trivially met. 
Next, we assume $v\ne0$. By expanding the square, we find that
\[
\|u-cv\|_{\mathcal{H}}^2=\|u\|_{\mathcal{H}}^2+c^2\|v\|_{\mathcal{H}}^2
-2c\langle u,v\rangle_{\mathcal{H}},
\]
which for $v\ne 0$ attains its minimum for $c=\|v\|_{\mathcal{H}}^{-2}
\langle u,v\rangle_{\mathcal{H}}$: 
\[
\inf_{c\in\R}\|u-cv\|_{\mathcal{H}}^2=\|u\|_{\mathcal{H}}^2-
\frac{\langle u,v\rangle_{\mathcal{H}}^2}{\|v\|_{\mathcal{H}}^2}.
\]
The equivalence of (a) and (c) for $n\ne0$ is immediate from this formula.
As for (b), we note that if introduce the reciprocal constant $c'=1/c$, the
inequality reads $\|c'u-v\|_{\mathcal{H}}\ge\theta \|v\|_{\mathcal{H}}$, which 
is the same as (a) if we switch the roles of $u$ and $v$. Moreover, since 
(c) is preserved under such a switch, the equivalence of (b) and (c) now 
follows from the equivalence of (a) and (c). 
\end{proof}

\subsection{Asymptotic variance via hyperbolic zero packing}

We now explain where the bound asserted in Theorem \ref{cor-MAIN1} comes from.

\begin{proof}[Proof of Theorem \ref{cor-MAIN1}]
We assume $0<r<1$. 
By the definition \eqref{eq-rhodef1.11.1} of $\rho_{\mathbb{H}}^\ast$,  
and by using Wennman's theorem $\rho_{\mathbb{H}}=\rho_{\mathbb{H}}^\ast$, 
we see that there exists a parameter $\epsilon=\epsilon(r)$ with 
$0\le\epsilon\le1$ and $\epsilon(r)\to0$ as $r\to1^-$, such that for every
polynomial $f$,
\begin{equation}
\int_{\D}\Phi_f(z,r)\frac{\diff A(z)}{1-|z|^2}\ge(1-\epsilon)
\rho_{\mathbb{H}}\log\frac{1}{1-r^2}.
\label{eq-approxrho1}
\end{equation}
If $\mathcal{H}=L^2_\alpha(\D)$ with $\alpha=-1$, and $u,v$ are the real-valued
functions $u=1_{\D(0,r)}$ and $v(z)=(1-|z|^2)|f(z)|$ , then
\[
\|u-v\|_{\mathcal{H}}^2=\int_{\D}\Phi_f(z,r)\frac{\diff A(z)}{1-|z|^2},
\quad \|u\|_{\mathcal{H}}^2=\log\frac{1}{1-r^2},
\]
and \eqref{eq-approxrho1} expresses that 
\begin{equation}
\|u-v\|_{\mathcal{H}}^2\ge(1-\epsilon)
\rho_{\mathbb{H}}\|u\|_{\mathcal{H}}^2.
\label{eq-approxrho1.01}
\end{equation}
Since $f$ is an arbitrary polynomial we may freely replace $v$ by $cv$ in
\eqref{eq-approxrho1.01}, provided that $c\ge0$: 
\begin{equation}
\|u-cv\|_{\mathcal{H}}^2\ge(1-\epsilon)
\rho_{\mathbb{H}}\|u\|_{\mathcal{H}}^2,\qquad c\ge0.
\label{eq-approxrho1.02}
\end{equation}
But then the inequality of \eqref{eq-approxrho1.02} holds for all $c\in\R$,
since by the proof of Lemma \ref{lem-basic0} the global minimum over $c$ 
is attained at $c=\|v\|_{\mathcal{H}}^{-2}\langle u,v\rangle_{\mathcal{H}}\ge0$
(the case when $\|v\|_{\mathcal{H}}=0$ is trivial). 
Now, from the equivalence of the conditions (a) and (c) of 
Lemma \ref{lem-basic0}, it follows that \eqref{eq-approxrho1.02} is the same
as having
\begin{equation}
(\langle u,v\rangle_{\mathcal{H}})^2\le(1-(1-\epsilon)\rho_{\mathbb{H}})
\|u\|_{\mathcal{H}}^2\|v\|_{\mathcal{H}}^2.
\label{eq-approxrho1.03}
\end{equation}
When we write this out in terms of the chosen functions $u,v$, we obtain
\begin{equation}
\bigg(\int_{\D(0,r)}|f|\diff A\bigg)^2\le
\big(1-(1-\epsilon)\rho_{\mathbb{H}}\big)
\log\frac{1}{1-r^2}\times\int_{\D}|f(z)|^2(1-|z|^2)\diff A(z),
\label{eq-approxrho2}
\end{equation}
So far, $f$ was assumed to be a polynomial, but by approximation, 
\eqref{eq-approxrho2} holds for any holomorphic $f$ in $\D(0,r)$ such that
the right-hand side integral is finite. Next, we pick a bounded holomorphic 
function $h:\D\to\C$ with $h(0)=0$, and apply Lemma \ref{lem-basic2} combined
with \eqref{eq-meaningf1}:
\[
\langle zg_r,h\rangle_\Te=\langle g,(h')_r\rangle_\D=
\langle\Pop\mu,(h')_r\rangle_\D=\langle\mu,(h')_r\rangle_\D.
\]
It now follows that
\[
|\langle zg_r,h\rangle_\Te|\le\|\mu\|_{L^\infty(\D)}\|(h')_r\|_{A^1(\D)}
=\frac{\|\mu\|_{L^\infty(\D)}}{r^{2}}\int_{\D(0,r)}|h'|\diff A,
\]
and we may combine this with the estimate \eqref{eq-approxrho2}, 
where $f=h'$, and arrive at
\begin{equation}
|\langle zg_r,h\rangle_\Te|^2\le\big(1-(1-\epsilon)\rho_{\mathbb{H}}\big)
\frac{\|\mu\|^2_{L^\infty(\D)}}{r^{4}}\log\frac{1}{1-r^2}\times
\int_{\D}|h'(z)|^2(1-|z|^2)\diff A.
\label{eq-approxrho3}
\end{equation}
By the elementary inequality $1-|z|^2\le\log\frac{1}{|z|^2}$ and the standard
Paley identity (\cite{Gar}, p. 236) for the $H^2$ norm (which is a 
consequence of Green's formula), we know that
\[
\int_{\D}|h'(z)|^2(1-|z|^2)\diff A
\le\int_{\D}|h'(z)|^2\log\frac{1}{|z|^2}\diff A=\int_\Te|h|^2\diff s=
\|h\|_{H^2(\D)}^2,
\]
where in the second last step, we used that $h(0)=0$.
We insert this into \eqref{eq-approxrho3}:
\begin{equation*}
|\langle zg_r,h\rangle_\Te|^2\le\big(1-(1-\epsilon)\rho_{\mathbb{H}}\big)
\frac{\|\mu\|^2_{L^\infty(\D)}\|h\|_{H^2(\D)}^2}{r^{4}}\log\frac{1}{1-r^2}.
\end{equation*}
Finally, we plug in $h:=zg_r$, where $g=\Pop\mu$, which yields
\begin{equation*}
\|g_r\|^2_{H^2(\D)}=\|zg_r\|^2_{H^2(\D)}=
|\langle zg_r,zg_r\rangle_\Te|\le\big(1-(1-\epsilon)\rho_{\mathbb{H}}\big)
\frac{\|\mu\|^2_{L^\infty(\D)}}{r^{4}}\log\frac{1}{1-r^2}.
\end{equation*}
Since $\epsilon=\epsilon(r)\to0$ as $r\to 1^-$, the claimed estimate now 
follows.
\end{proof}

We now turn to the proof of Theorem \ref{thm-equality}. 

\begin{proof}[Proof of Theorem \ref{thm-equality}]
Since by Wennman's theorem $\rho_{\mathbb{H}}^\ast=\rho_{\mathbb{H}}$,
the number $\rho_{\mathbb{H}}$ can be understood as the largest nonnegative 
real such that \eqref{eq-approxrho1} holds. The analysis of the equivalence
of \eqref{eq-approxrho1.01} and \eqref{eq-approxrho1.03} (which is the same 
as \eqref{eq-approxrho2}) shows that
\begin{equation}
r^4\|f_r\|_{A^1(\D)}^2\le
\big(1-(1-\epsilon)\rho_{\mathbb{H}}\big)\|f\|_{A^2_1(\D)}^2
\log\frac{1}{1-r^2},
\label{eq-approxrho2.01}
\end{equation}
for any $f\in A^2_1(\D)$, where $\epsilon=\epsilon(r)\to0$ as $r\to1^-$.
Again, the constant $\rho_{\mathbb{H}}$ is largest possible so that 
\eqref{eq-approxrho2.01} holds.
We now express the estimate \eqref{eq-approxrho2.01} in terms of the 
following operator norm bound for the dilation $\mathbf{D}_r$ given by 
$\mathbf{D}_r f(z)=f_r(z)=f(rz)$:
\begin{equation}
\|\mathbf{D}_r\|_{A^2_{1}(\D)\to A^1(\D)}^2\le
\big(1-(1-\epsilon)\rho_{\mathbb{H}}\big)r^{-4}\log\frac{1}{1-r^2}.
\label{eq-approxrho8.2}
\end{equation}
From the optimality of the constant $\rho_{\mathbb{H}}$ in 
\eqref{eq-approxrho2.01} we see that
\begin{equation}
\limsup_{r\to1^-}
\frac{\|\mathbf{D}_r\|^2_{A^2_{1}(\D)\to A^1(\D)}}{\log\frac{1}{1-r^2}}
=1-\rho_{\mathbb{H}}.
\label{eq-approxrho8.3}
\end{equation}
With respect to $\langle\cdot,\cdot\rangle_\D$, the dual space to the
weighted Bergman space $A^2_1(\D)$ is isometrically $H_*^2(\D)$, which is 
just $H^2(\D)$ but equipped with the equivalent norm 
\[
\|f\|^2_{H^2_*(\D)}:=\|f\|_{H^2(\D)}^2+\|f\|_{A^2(\D)}^2.
\]
With respect to the dual action $\langle\cdot,\cdot\rangle_\D$, 
Lemma \ref{lem-basic1} tells us that $\mathbf{D}_r^*=\mathbf{D}_r$, 
and  we recall that isometrically, the dual space to $A^2_{1}(\D)$ is 
$H^2_*(\D)$ while the dual to $A^1(\D)$ is $\Pop L^\infty(\D)$.
Since by basic functional analysis the norm of an operator and its adjoint
are the same, we get from \eqref{eq-approxrho8.3} that
\begin{equation}
\limsup_{r\to1^-}\frac{\|\mathbf{D}_r\|^2_{\Pop L^\infty(\D)\to H^2_*(\D)}}
{\log\frac{1}{1-r^2}}
=1-\rho_{\mathbb{H}}.
\label{eq-approxrho8.3.1}
\end{equation}
For $\mu\in L^\infty(\D)$ and $g=\Pop\mu$, we observe that $\|g_r\|_{A^2(\D)}\le
\|\mu\|_{L^2(\D)}\le \|\mu\|_{L^\infty(\D)}$, which shows that 
\[
\frac{\|g_r\|_{H^2_*(\D)}^2}{\log\frac{1}{1-r^2}}
=\frac{\|g_r\|_{H^2(\D)}^2+\|g_r\|_{A^2(\D)}^2}{\log\frac{1}{1-r^2}}=
\frac{\|g_r\|_{H^2(\D)}^2+\mathrm{O}(1)}{\log\frac{1}{1-r^2}}=
\frac{\|g_r\|_{H^2(\D)}^2}{\log\frac{1}{1-r^2}}+\mathrm{o}(1)
\]
as $r\to1^-$. It follows from this combined with 
\eqref{eq-approxrho8.3.1} that
\begin{equation}
\limsup_{r\to1^-}\frac{\|\mathbf{D}_r\|^2_{\Pop L^\infty(\D)\to H^2(\D)}}
{\log\frac{1}{1-r^2}}
=1-\rho_{\mathbb{H}}.
\label{eq-approxrho8.3.001}
\end{equation}
Now, the left-hand side expresses a uniform version of the asymptotic variance
$\Sigma^2$. The relation \eqref{eq-approxrho8.3.001} entails the following 
statement. There exists some (sparse) sequence of radii $R_j$ such
that $0<R_j<1$, $R_j\to1$ as $j\to+\infty$, and $R_j$ increases with $j$, 
as well as functions  $\mu_j\in L^\infty(\D)$ with $\|\mu_j\|_{L^\infty(\D)}=1$ 
such that if we put $g_j:=\Pop\mu_j$ we have that
\begin{equation}
\lim_{j\to+\infty}\frac{\|(g_j)_{R_j}\|^2_{H^2(\D)}}
{\log\frac{1}{1-R_j^2}}
=1-\rho_{\mathbb{H}}.
\label{eq-approxrho8.3.002}
\end{equation}
But we need to produce a single function $g$ such that 
\eqref{eq-approxrho8.3.002} holds with $g_j$ replaced by $g$.
How to do this?
We make some preliminary observations. 
For $\mu\in L^\infty(\D)$, let $\mu^{(r)}\in L^\infty(\D)$ denote the function
which equals $\mu^{(r)}(z):=\mu(z/r)$ for $|z|<r$ and $\mu^{(r)}(z)=0$ 
elsewhere. A direct calculation verifies that
\[
\Pop\mu^{(r)}(z)=\int_\D\frac{\mu^{(r)}(w)}{(1-z\bar w)^2}\diff A(w)=
r^2\int_\D\frac{\mu(w)}{(1-rz\bar w)^2}\diff A(w)=r^2\Pop\mu(rz),
\]
so that in particular $\Pop\mu^{(r)}(rz)=r^2\Pop\mu(r^2z)$.
We put $r_j:=R_j^{1/2}$ and build the function $\mu\in L^\infty(\D)$ with
$\|\mu\|_{L^\infty(\D)}=1$ as follows: 
\[
\mu(z)=\mu_j^{(r_j)}(z)=\mu_j(z/r_j)
\quad\text{for}\quad r_{j-1}<|z|<r_j,\quad j=2,3,4,\ldots,
\]
whereas $\mu(z):=\mu_1(z/r_1)$ for $|z|<r_1$. Next, we rewrite $\Pop\mu$ 
in the form (since $r_j^2=R_j$ and $g_j=\Pop\mu_j$)
\begin{equation*}
\Pop\mu(z)=\Pop\mu_j^{(r_j)}(z)+\Pop(\mu-\mu^{(r_j)})(z)
=R_j g_j(r_jz)+\Pop(\mu-\mu_j^{(r_j)})(z),\qquad r_{j-1}<|z|<r_j,\quad
j=2,3,4,\ldots. 
\end{equation*}
By definition, $\mu-\mu_j^{(r_j)}$ vanishes on the annulus $r_{j-1}<|z|<r_j$,
and $\mu-\mu_j^{(r_j)}=\mu$ on the annulus $r_j<|z|<1$. We write 
$\mu^\sharp_{j}:=(\mu-\mu_j^{(r_j)})1_{\D(0,r_{j-1})}$ and 
$\mu^\flat_{j}:=(\mu-\mu_j^{(r_j)})1_{\D\setminus\D(0,r_{j})}$ so that 
$\mu-\mu_j^{(r_j)}=\mu^\sharp_{j}+\mu^\flat_{j}$ holds area-almost everywhere 
on $\D$. With respect to $\mu_j^\sharp$ we apply the elementary estimate
\[
\big|\Pop\mu_j^\sharp(z)\big|=\bigg|\int_{\D(0,r_{j-1})}
\frac{\mu(w)-\mu_j(w/r_j)}{(1-z\bar w)^2}\diff A(w)\bigg|\le
2\int_{\D(0,r_{j-1})}
\frac{\diff A(w)}{|1-z\bar w|^2}\diff A(w)=\frac{2}{|z|^2}
\log\frac{1}{1-R_{j-1}|z|^2}.
\]
With respect to $\mu^\flat_{j}$, on the other hand, we apply the 
alternative elementary estimate
\[
\big|\Pop\mu_j^\flat(z)\big|=\bigg|\int_{\D\setminus\D(0,r_{j})}
\frac{\mu(w)}{(1-z\bar w)^2}\diff A(w)\bigg|\le
\int_{\D\setminus\D(0,r_{j})}
\frac{\diff A(w)}{|1-z\bar w|^2}\diff A(w)=\frac{1}{|z|^2}
\log\frac{1-R_j|z|^2}{1-|z|^2}.
\]
Both estimates need to be applied for $|z|=r_j$:
\[
\big|\Pop\mu_j^\sharp(z)\big|\le\frac{2}{R_j}\log\frac{1}{1-R_{j-1}R_j},\quad
\big|\Pop\mu_j^\flat(z)\big|\le\frac{1}{R_j}\log(1+R_j),\quad\text{if}\,\,\,\,
|z|=r_j.
\]
We now find that
\[
g_{r_j}(z)=g(r_jz)=R_jg_j(R_jz)+\Pop(\mu-\mu_j^{(r_j)})(r_jz),
\] 
where
\[
\big|\Pop(\mu-\mu_j^{(r_j)})(r_jz)\big|\le 
\frac{2}{R_j}\log\frac{1}{1-R_{j-1}R_j}+\frac{1}{R_j}\log(1+R_j),\quad
\text{if}\quad|z|=1.
\]
In particular, since the $H^2(\D)$-norm is dominated by the supremum norm
on the circle $\Te$,
we obtain that
\[
\big\|R_j^{-1}g_{r_j}-(g_j)_{R_j}\big\|_{H^2(\D)}\le 
\frac{2}{R_j^2}\log\frac{1}{1-R_{j-1}R_j}+\frac{1}{R_j^2}\log(1+R_j),
\]
so that
\begin{equation}
\frac{\big\|R_j^{-1}g_{r_j}-(g_j)_{R_j}\big\|_{H^2(\D)}}
{\sqrt{\log\frac{1}{1-R_j^2}}}\le 
\frac{2}{R_j^2}\frac{\log\frac{1}{1-R_{j-1}R_j}}{\sqrt{\log\frac1{1-R_j^2}}}+
\frac{1}{R_j^2}\frac{\log(1+R_j)}{\sqrt{\log\frac{1}{1-R_j^2}}}.
\label{eq-normdiff1.001}
\end{equation}
By passing to a subsequence, we are free to make the sequence of radii 
$R_j$ as sparse as we need. So we just pick them so that the right-hand 
side of \eqref{eq-normdiff1.001} tends to $0$ as $j\to+\infty$. 
Finally, by taking square roots in \eqref{eq-approxrho8.3.002} we know that
\begin{equation*}
\lim_{j\to+\infty}\frac{\|(g_j)_{R_j}\|_{H^2(\D)}}
{\sqrt{\log\frac{1}{1-R_j^2}}}
=(1-\rho_{\mathbb{H}})^{1/2},
%\label{eq-approxrho8.3.002}
\end{equation*}
which together with the estimate \eqref{eq-normdiff1.001} and the sparsity of
the radii leads to 
\begin{equation*}
\lim_{j\to+\infty}\frac{\|(g)_{r_j}\|_{H^2(\D)}}
{\sqrt{\log\frac{1}{1-R_j^2}}}
=(1-\rho_{\mathbb{H}})^{1/2},
%\label{eq-approxrho8.3.002}
\end{equation*}
which entails that
\[
\limsup_{r\to1^-}\frac{\|(g)_{r}\|_{H^2(\D)}}
{\sqrt{\log\frac{1}{1-r^2}}}
\ge(1-\rho_{\mathbb{H}})^{1/2},
\]
since
\[
\lim_{r\to1^-}\frac{\log\frac{1}{1-r^2}}{\log\frac{1}{1-r^4}}=1.
\]
As we have now found a single $\mu$ which does the job, the estimate
$\Sigma^2\ge1-\rho_{\mathbb{H}}$ follows. Since the reverse inequality 
$\Sigma^2\le1-\rho_{\mathbb{H}}$ was obtained in Theorem \ref{cor-MAIN1}, the
equality $\Sigma^2=1-\rho_{\mathbb{H}}$ is immediate.
\end{proof}

\section{The proof of the estimate from below 
on $\rho_{\mathbb{H}}$}
\label{sec-appendix}

\subsection{Pointwise estimates}

For a real parameter $\alpha$, let 
$\diff A_\alpha(z)=(1-|z|^2)^\alpha\diff A(z)$ denote the standard 
weighted area measure on $\D$. We shall need the following estimate.

\begin{lem}
We have the following pointwise estimates, for a holomorphic function 
$f:\D\to\C$ and $0<r<1$:
\[
|f(z)|^2\le
r^{-2}\sum_{j=0}^{+\infty}\frac{(j+1)(j+2)}{1+(j+1)(1-r^2)}
\bigg(\frac{|z|^2}{r^2}\bigg)^{j}
\int_{\D(0,r)}|f|^2\diff A_1,\qquad z\in\D(0,r),
\]
and
\[
|f'(z)|^2\le
r^{-4}\sum_{j=0}^{+\infty}\frac{(j+1)^2(j+2)(j+3)}{1+(j+2)(1-r^2)}
\bigg(\frac{|z|^2}{r^2}\bigg)^{j}
\int_{\D(0,r)}|f|^2\diff A_1,\qquad z\in\D(0,r).
\]
\label{lem-pointwise1.02}
\end{lem}

\begin{proof}
Let $\mathcal{H}$ denote the Hilbert space of holomorphic functions $f$ 
on $\D(0,r)$ that are $L^2$--integrable with respect to the measure 
$\diff A_1$. The Bergman kernel representation of a function 
$f\in\mathcal{H}$ is
\[
f(z)=\langle f,K_r(\cdot,z)\rangle_{\mathcal{H}}
=\int_{\D(0,r)}K_r(z,w)f(w)\diff A_1(w),
\]
where $K_r$ is the corresponding weighted Bergman kernel
\[
K_r(z,w):=\sum_{j=0}^{+\infty}
r^{-2j-2}\frac{(j+1)(j+2)}{1+(j+1)(1-r^2)}(z\bar w)^j.
\]
The corresponding representation for the derivative is
\[
f'(z)=\langle f,\bar\partial_zK_r(\cdot,z)\rangle_{\mathcal{H}}
=\int_{\D(0,r)}\partial_z K_r(z,w)f(w)\diff A_1(w).
\]
Now, by elementary Hilbert space methods, the optimal estimate for the 
value and derivative are, respectively,
\[
|f(z)|^2\le K_r(z,z)\|f\|^2_{\mathcal H},\qquad|f'(z)|^2\le \hDelta_z
K_r(z,z)\|f\|^2_{\mathcal H},
\]
for $z\in\D(0,r)$. Now, as for $K_r(z,z)$ we have that 
\[
K_r(z,z)=\sum_{j=0}^{+\infty}r^{-2j-2}\frac{(j+1)(j+2)}{1+(j+1)(1-r^2)}|z|^{2j},
\qquad z\in\D(0,r).
\]
Finally, as regards $\hDelta_z K_r(z,z)$, we have that 
\[
\hDelta_z K_r(z,z)=
\sum_{j=1}^{+\infty}r^{-2j-2}\frac{j^2(j+1)(j+2)}{1+(j+1)(1-r^2)}|z|^{2j-2},
\qquad z\in\D(0,r).
\]
This completes the proof of the lemma.
\end{proof}

Let $\nabla:=(\partial_x,\partial_y)$ stand for the usual gradient, if 
$z=x+\imag y$ is the representation of the complex coordinate.

\begin{lem}
Suppose $f:\D\to\C$ is holomorphic and nontrivial. Then the function
$z\mapsto (1-|z|^2)|f(z)|$ has local minima only at the zeros of $f$.  
Moreover, for $0<r<1$, the gradient of this function enjoys the estimate
\[
\big|\nabla\big((1-|z|^2)|f(z)|\big)\big|
\le A(r,z)\bigg\{\int_{\D(0,r)}|f|^2\diff A_1\bigg\}^{1/2},
\qquad z\in\D(0,r),
\]
where
\begin{equation*}
A(r,z):=
2|z|K_r(z,z)^{1/2}+(1-|z|^2)(\hDelta_z K(z,z))^{1/2},
\end{equation*}
and $K_r$ is the reproducing kernel of the proof of Lemma 
\ref{lem-pointwise1.02}.
\label{lem-pointwise1.03}
\end{lem}

\begin{proof}
Since 
\[
\hDelta\log\{(1-|z|^2)|f(z)|\}=-(1-|z|^2)^{-2}<0
\] 
holds away from the zeros of $f$, the critical points of the function 
$z\mapsto \log(1-|z|^2)|f(z)|$ can only be local maxima or saddle points,
and this carries over to the function $z\mapsto(1-|z|^2)|f(z)|$ as well.
The estimate of the gradient uses the estimates of 
Lemma \ref{lem-pointwise1.02} together with the product rule
\[
\nabla[(1-|z|^2)|f(z)|]=-|f(z)|\nabla|z|^2+(1-|z|^2)\nabla|f(z)|
\]
and the facts that $|\nabla|z|^2|=2|z|$ and $|\nabla|f||\le|f'|$. 
The estimates of $|f(z)|$ and $|f'(z)|$ are from Lemma \ref{lem-pointwise1.02}.
\end{proof}

\subsection{The fundamental local estimate}

We need to estimate the hyperbolic zero packing constant $\rho_{\mathbb{H}}$
from below.
The hard part consists in obtaining the following local estimate.  

\begin{prop}
There exists an absolute constant $\rho_1$, with $0<\rho_1<1$, such that
for holomorphic $f:\D\to\C$, 
\[
\rho_1
\le 
\int_{\D(0,\frac12)}\big(|f(z)|(1-|z|^2)-
1\big)^2\frac{\diff A(z)}{1-|z|^2}.
\]
For instance, $\rho_1:=2.08\times 10^{-5}$ will do. 
\label{prop-2.1}
\end{prop}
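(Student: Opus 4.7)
Write $F(z) := (1-|z|^2)|f(z)|$ and assume for contradiction that
\[
J := \int_{\D(0,1/2)} (F(z)-1)^2 \frac{\diff A(z)}{1-|z|^2}
\]
is smaller than the claimed $\rho_1$; the goal is to derive a lower bound on $J$ that contradicts this. First, I would bootstrap $J$ into an $A^2_1$-bound on $f$: from the elementary inequality $F^2 \le 2(F-1)^2 + 2$,
\[
\int_{\D(0,1/2)} |f|^2(1-|z|^2)\diff A = \int_{\D(0,1/2)} F^2 \frac{\diff A}{1-|z|^2} \le 2J + 2\log(4/3),
\]
so this integral is bounded by an absolute constant $M_0$ whenever $J \le 1$. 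Lemma \ref{lem-pointwise1.02} applied with $r = 1/2$ then yields absolute pointwise bounds on $|f|$ and $|f'|$ throughout $\overline{\D(0,1/4)}$, and Lemma \ref{lem-pointwise1.03} provides an absolute Lipschitz constant $L$ for the function $F$ on $\overline{\D(0,1/4)}$.

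Next, I would upgrade the $L^2$-closeness of $F$ to $1$ (which smallness of $J$ directly provides) to a pointwise closeness on $\overline{\D(0,1/8)}$. For any $z_0 \in \overline{\D(0,1/8)}$ and any $r_1 \le 1/8$, a Chebyshev argument inside $\D(z_0,r_1) \subset \D(0,1/4)$ produces a point $z_1 \in \D(z_0,r_1)$ with $|F(z_1)-1| \le J^{1/2}/r_1$; combining with the Lipschitz bound and optimizing in $r_1$ yields
\[
\|F - 1\|_{L^\infty(\overline{\D(0,1/8)})} \le 2L^{1/2}J^{1/4}.
\]
For $\rho_1$ chosen small enough this forces $F \ge 1/2$ on all of $\overline{\D(0,1/8)}$, and in particular $f$ has no zeros there.

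Since $\log|f|$ is then harmonic on $\D(0,1/8)$, the classical mean value property together with $F = (1-|z|^2)|f|$ yields the Jensen-type identity
\[
\log F(0) - \frac{1}{2\pi}\int_0^{2\pi}\log F\bigl(\tfrac{1}{8}\e^{\imag\theta}\bigr)\diff\theta = -\log\bigl(1 - \tfrac{1}{64}\bigr) = \log\tfrac{64}{63},
\]
where the strictly positive right-hand side encodes the strict superharmonicity of $\log(1-|z|^2)$ (and hence of $\log F$ on the zero-free set). On the other hand, the pointwise bound $|F-1| \le 2L^{1/2}J^{1/4}$ yields, via $|\log F| \le 2|F-1|$ once $F \ge 1/2$, that both $\log F(0)$ and the circular mean are of size at most $C_1 J^{1/4}$ with $C_1$ explicit. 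Hence $\log(64/63) \le 2C_1 J^{1/4}$, forcing $J$ to exceed an explicit positive constant and contradicting $J < \rho_1$.

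\textbf{Main obstacle.} The Jensen deficit $\log(64/63) \approx 0.0157$ is very small, so reaching the stated numerical threshold $\rho_1 = 1.3 \times 10^{-8}$ requires very careful tracking of constants through Lemma \ref{lem-pointwise1.03} and the Chebyshev-Lipschitz upgrade; a naive implementation overshoots by many orders of magnitude. Tightening presumably involves optimizing the inner and outer radii (here $1/8$ and $1/2$) and replacing the crude $L^\infty$ pointwise bound by a Fubini-based $L^2$ bound on a carefully chosen concentric circle, so that the deficit is compared with the circular $L^2$-norm of $\log F$ rather than with its sup.
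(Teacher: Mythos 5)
Your argument is sound as a proof of the qualitative statement (existence of an absolute $\rho_1>0$), and it takes a genuinely different route from the paper. You bootstrap an $A^2_1$-bound out of the smallness of $J$ itself, upgrade $L^2$-closeness of $F=(1-|z|^2)|f|$ to $1$ into pointwise closeness on $\overline{\D(0,\frac18)}$ by Chebyshev plus the Lipschitz bound of Lemma \ref{lem-pointwise1.03}, and then contradict the mean-value deficit $\log\frac{64}{63}$ coming from harmonicity of $\log|f|$ on a zero-free disk. The paper instead normalizes $\int_{\D(0,\frac12)}|f|^2(1-|z|^2)\diff A=1$, optimizes over an amplitude $b$, and runs a dichotomy: either $b_f$ is small (Case I, immediate), or $F=b_f(1-|z|^2)|f|$ is uniformly large on $\D(0,\frac13)$ (Case IIa, where the maximum principle applied to $1/|f|$ forces $F$ to overshoot $1$ on a disk of definite area), or else a steepest-descent path from a point with $F<\frac{17}{18}$ leads either to a zero of $f$ or to $\Te(0,\frac25)$, and in both subcases one exhibits a tube or small disk of definite area on which $(F-1)^2$ exceeds a definite constant. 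Both proofs ultimately exploit the strict negativity of $\hDelta\log F$ off the zeros; yours does so through the Jensen-type identity, the paper's through the absence of local minima and the descent path.

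The one genuine shortfall concerns the numerical clause, which you acknowledge but which should not be understated: your chain loses a fourth root ($\|F-1\|_\infty\lesssim L^{1/2}J^{1/4}$, hence $J\gtrsim(\log\frac{64}{63})^4/(8^4L^2)$), and with the constants actually available from Lemmas \ref{lem-pointwise1.02} and \ref{lem-pointwise1.03} (a Lipschitz constant $L$ of order $70$ on $\overline{\D(0,\frac14)}$) this yields a lower bound of order $10^{-15}$, not $1.3\times10^{-8}$. Since the specific value of $\rho_1$ feeds, via $\rho_2=\frac49\rho_1$ and the global argument, into the headline estimate $\rho_{\mathbb{H}}>2\times10^{-8}$ of Theorem \ref{thm-mddhzp}, your proof as written would force a weaker numerical conclusion there. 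The paper's case analysis avoids the $J^{1/4}$ loss precisely because it never converts $L^2$-smallness into sup-norm closeness: it directly produces a region of area linear in the tube half-width $\delta$ on which $F$ is at least $\frac1{18}-41\delta$ away from $1$, and optimizing $\delta=\frac1{2214}$ is what delivers $1.314\times10^{-8}$. If you want your route to reach a comparable constant, the refinements you mention (comparing the deficit with a circular $L^2$-mean of $\log F$ rather than a sup, and optimizing the radii) would indeed be where to invest, but as it stands the "for instance $\rho_1=1.3\times10^{-8}$" part is not established.
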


\begin{proof}

We write $f=bf_0$, where $b$ is a positive constant, where $f_0$ is normalized:
\begin{equation}
\int_{\D(0,\frac12)}|f_0(z)|^2(1-|z|^2)\diff A(z)=1.
\label{eq-norm1.01}
\end{equation}
So, we need to show that 
\begin{equation}
\rho_1\le 
\inf_{b>0}\int_{\D(0,\frac12)}\big(b\,|f_0(z)|(1-|z|^2)-
1\big)^2\frac{\diff A(z)}{1-|z|^2}.
\label{eq-reform1.001}
\end{equation}
It is possible to verify that for $r=\frac12$, the function $A(\frac12,z)$
of Lemma \ref{lem-pointwise1.03} is radial and increasing with $|z|$, and 
plugging in $|z|=\frac25$ we obtain from numerical work that
that
\[
A(\tfrac12,z)\le 67,\qquad |z|\le\tfrac25.
\] 
Now, from the normalization \eqref{eq-norm1.01} we get from Lemma 
\ref{lem-pointwise1.03} that
\begin{equation}
\big|\nabla\big((1-|z|^2)|f_0(z)|\big)\big|\le
67,\qquad z\in\D(0,\tfrac25),
\label{eq-deriv1.01}
\end{equation}
where we decided to estimate on a slightly smaller disk.
Next, a straightforward calculus exercise shows that the minimum over $b>0$ is 
attained at the value
\[
b=b_{f_0}:=\int_{\D(0,\frac12)}|f_0(z)|\diff A(z).
\] 
Moreover, a well-known calculation shows that
\begin{equation}
\int_{\D(0,\frac12)}\big(b_{f_0}|f_0(z)|(1-|z|^2)-1\big)^2\frac{\diff A(z)}{1-|z|^2}
=\int_{\D(0,\frac12)}\frac{\diff A(z)}{1-|z|^2}-b_{f_0}^2=\log\frac43-b_{f_0}^2,
\label{eq-p1.2}
\end{equation}
so that in particular, $b_{f_0}^2\le \log\frac43$. Next, we split our argument
according to the size of $b_{f_0}$.
\medskip

\noindent{\sc Case I}. \emph{Suppose that $b_{f_0}^2\le\frac12\log\frac43$}.
Then by \eqref{eq-p1.2}, the claimed estimate holds whenever $\rho_1\le
\frac12\log\frac43$. 

\medskip

\noindent{\sc Case II}. \emph{Suppose that 
$\frac12\log\frac43<b_{f_0}^2\le\log\frac43$}.
We let $F$ be the function $F(z):=(1-|z|^2)|f(z)|=b_{f_0}(1-|z|^2)|f_0(z)|$, 
so that by \eqref{eq-deriv1.01}, we know that
\begin{equation}
|\nabla F(z)|\le 67b_{f_0}\le 36,\qquad w\in \D(0,\tfrac25). 
\label{eq-gradest1.1}
\end{equation}
For a positive real number $\epsilon$, to be specified later, we consider 
the set $\Omega(f,\epsilon)$ given by
\[
\Omega(f,\epsilon):=\big\{z\in\D(0,\tfrac13):\,\,
(1-|z|^2)|f(z)|\ge\epsilon\big\}.
\]
We divide Case II further according to the properties of the set 
$\Omega(f,\epsilon)$.
\medskip

\noindent{\sc Case IIa}: \emph{Suppose that} 
$\Omega(f,\epsilon)\ne\D(0,\frac13)$. 
Then $|F(z_0)|=(1-|z_0|^2)|f(z_0)|<\epsilon$ at some point 
$z_0\in\D(0,\frac13)$, and, in view of \eqref{eq-gradest1.1} and convexity, 
\[
|F(z)|\le\epsilon+36|z-z_0|,\qquad z\in\D(0,\tfrac25).
\]
In particular, if $\epsilon\le\frac{1}{10}$, we see that
\begin{equation}
\int_{\D(0,\frac12)}\big(F(z)-1\big)^2\frac{\diff A(z)}{1-|z|^2}\ge
\int_{\D(z_0,\frac{1}{80})}\big(F(z)-1\big)^2\frac{\diff A(z)}{1-|z|^2}\ge
\bigg(1-\frac{36}{80}-\frac1{10}\bigg)^2\frac{1}{80^2}\ge 3.16\times 10^{-5}.
\label{eq-estbelow1.00101}
\end{equation} 
\medskip

\noindent{\sc Case IIb}: \emph{Suppose that} 
$\Omega(f,\epsilon)=\D(0,\frac13)$. To fit with the argument of {\sc Case IIa}
we should assume that $\epsilon\le\frac1{10}$. In particular, $u:=\log|f|$ is
harmonic, and $F(z)=(1-|z|^2)\e^{u(z)}$. We will make use of the 
following elementary estimate:
\begin{equation}
(1-t)^2\ge \frac{(1-\epsilon)^2}{(\log\frac{1}{\epsilon})^2}
(\log{t})^2,\qquad \epsilon\le t<+\infty,
\label{eq-estbelow1.00102}
\end{equation}
which follows from the monotonicity of the expression 
$\frac{1-\e^{-s}}{s}$ where $s=\log\frac1t$. It is immediate from 
\eqref{eq-estbelow1.00102} and from our assumption 
$\Omega(f,\epsilon)=\D(0,\frac13)$ that
\begin{multline}
\int_{\D(0,\frac12)}\big(F(z)-1\big)^2\frac{\diff A(z)}{1-|z|^2}\ge
\frac{(1-\epsilon)^2}{(\log\frac{1}{\epsilon})^2}
\int_{\D(0,\frac13)}\big(\log F(z)\big)^2\frac{\diff A(z)}{1-|z|^2}
\\
=\frac{(1-\epsilon)^2}{(\log\frac{1}{\epsilon})^2}
\int_{\D(0,\frac13)}\big(\log(1-|z|^2)+u(z)\big)^2\frac{\diff A(z)}{1-|z|^2}
\\
=\frac{(1-\epsilon)^2}{(\log\frac{1}{\epsilon})^2}
\int_{\D(0,\frac13)}\Big(\big(\log(1-|z|^2)\big)^2+2u(z)\log(1-|z|^2)
+u(z)^2\Big)\frac{\diff A(z)}{1-|z|^2}.
\label{eq-estbelow1.00103}
\end{multline}
We calculate that
\[
\int_{\D(0,\frac13)}\big(\log(1-|z|^2)\big)^2\frac{\diff A(z)}{1-|z|^2}=
\frac{(\log\frac98)^3}{3},
\]
and, moreover, by the mean value property of harmonic functions, that
\[
\int_{\D(0,\frac13)}u(z)\log(1-|z|^2)\frac{\diff A(z)}{1-|z|^2}=
u(0)\int_{\D(0,\frac13)}\log(1-|z|^2)\frac{\diff A(z)}{1-|z|^2}=-
\frac{(\log\frac98)^2}{2}u(0).
\]
Furthermore, since $u$ is harmonic, the square $u^2$ is subharmonic, and
hence
\[
\int_{\D(0,\frac13)}u(z)^2\frac{\diff A(z)}{1-|z|^2}\ge
u(0)^2\int_{\D(0,\frac13)}\frac{\diff A(z)}{1-|z|^2}=
u(0)^2\log\frac98.
\]
Adding up the terms, we now obtain from \eqref{eq-estbelow1.00103} that
\begin{multline}
\int_{\D(0,\frac12)}\big(F(z)-1\big)^2\frac{\diff A(z)}{1-|z|^2}\ge
\frac{(1-\epsilon)^2}{(\log\frac{1}{\epsilon})^2}\bigg(\frac{(\log\frac98)^3}{3}
-(\log\tfrac98)^2u(0)+u(0)^2\log\frac98\bigg)
\\
=\frac{(1-\epsilon)^2}{(\log\frac{1}{\epsilon})^2}
\bigg(\Big(u(0)-\frac12\log\frac98\Big)^2\log\frac98
+\frac1{12}\Big(\log\frac98\Big)^3\bigg)\ge
\frac{(1-\epsilon)^2}{12(\log\frac{1}{\epsilon})^2}\Big(\log\frac98\Big)^3.
\label{eq-estbelow1.00104}
\end{multline}
Finally, we specify that $\epsilon:=\frac1{10}$ so that 
\eqref{eq-estbelow1.00104} then gives that
\begin{equation}
\int_{\D(0,\frac12)}\big(F(z)-1\big)^2\frac{\diff A(z)}{1-|z|^2}\ge
2.08\times 10^{-5}.
\label{eq-estbelow1.00105}
\end{equation}

By comparing the estimates we obtained in the cases {\sc I, IIa, IIb}, we 
obtain that the assertion of the propostion holds with 
$\rho_1=2.08\times 10^{-5}$.
\end{proof}

\begin{rem}
We should mention that when asked, Borichev \cite{Bor} came up with an 
absolute lower bound via a somewhat different argument. 
\end{rem}

\subsection{Modification of the fundamental local estimate}

As it turns out, we will need to compare locally
not just with the constant $1$ but with a family of functions whose logarithms 
are harmonic.

\begin{prop}
There exists an absolute constant $\rho_2$ with $0<\rho_2<1$, 
such that for all holomorphic $f:\D\to\C$ and all points $\xi\in\D$, 
\[
\rho_2\le 
\int_{\D(0,\frac12)}\big(|f(z)|(1-|z|^2)-|1-\bar\xi z|^{-1}\big)^2
\frac{\diff A(z)}{1-|z|^2}.
\]
\label{prop-2.2}
For instance, $\rho_2=\frac49\rho_1$ will do, where $\rho_1$ is the 
constant of Proposition \ref{prop-2.1}.
\end{prop}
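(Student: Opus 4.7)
The plan is to reduce Proposition \ref{prop-2.2} to Proposition \ref{prop-2.1} via a simple multiplicative trick, exploiting the fact that $\log|1-\bar\xi z|^{-1}$ is harmonic and hence $(1-\bar\xi z)^{-1}$ is holomorphic in $z$.

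The key observation is that if we set $g(z):=(1-\bar\xi z)f(z)$, then $g:\D\to\C$ is holomorphic, and we have the pointwise identity
\[
(1-|z|^2)|f(z)|-|1-\bar\xi z|^{-1}=|1-\bar\xi z|^{-1}\bigl\{(1-|z|^2)|g(z)|-1\bigr\}.
\]
Squaring and integrating against the hyperbolic-type weight yields
\[
\int_{\D(0,\frac12)}\bigl((1-|z|^2)|f(z)|-|1-\bar\xi z|^{-1}\bigr)^2\frac{\diff A(z)}{1-|z|^2}
=\int_{\D(0,\frac12)}|1-\bar\xi z|^{-2}\bigl((1-|z|^2)|g(z)|-1\bigr)^2\frac{\diff A(z)}{1-|z|^2}.
\]

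Next I would bound the factor $|1-\bar\xi z|^{-2}$ from below on the disk $\D(0,\frac12)$. Since $|\xi|<1$ and $|z|\le\frac12$, the triangle inequality gives $|1-\bar\xi z|\le 1+|\xi||z|<\tfrac32$, hence $|1-\bar\xi z|^{-2}>\tfrac49$ uniformly on $\D(0,\frac12)$, independently of $\xi$.

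Applying Proposition \ref{prop-2.1} to the holomorphic function $g$ now finishes the job:
\[
\int_{\D(0,\frac12)}\bigl((1-|z|^2)|f(z)|-|1-\bar\xi z|^{-1}\bigr)^2\frac{\diff A(z)}{1-|z|^2}
\ge\frac49\int_{\D(0,\frac12)}\bigl((1-|z|^2)|g(z)|-1\bigr)^2\frac{\diff A(z)}{1-|z|^2}\ge\frac49\rho_1,
\]
so that $\rho_2:=\frac49\rho_1$ works. There is essentially no main obstacle here beyond recognizing the multiplicative factorization; the delicate work has already been done inside Proposition \ref{prop-2.1}, and the present statement is obtained by a one-line change of the holomorphic test function together with a crude pointwise bound that is safe because the integration is confined to a disk bounded away from $\Te$.
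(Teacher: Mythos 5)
Your argument is correct and coincides with the paper's own proof: the paper likewise introduces the auxiliary holomorphic function $g(z)=(1-\bar\xi z)f(z)$, pulls out the factor $|1-\bar\xi z|^{-2}\ge\frac49$ on $\D(0,\frac12)$, and applies Proposition \ref{prop-2.1} to $g$, arriving at $\rho_2=\frac49\rho_1$. Nothing further is needed.
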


\begin{proof}
We consider the auxiliary holomorphic function
$g(z):=(1-\bar\xi z)f(z)$. An application of Proposition \ref{prop-2.1} with
$g$ in place of $f$ gives that
\begin{multline*}
\int_{\D(0,\frac12)}\big(|f(z)|(1-|z|^2)-
|1-\bar\xi z|^{-1}\big)^2\frac{\diff A(z)}{1-|z|^2}=
\int_{\D(0,\frac12)}\big(|g(z)|(1-|z|^2)-1\big)^2
|1-\bar\xi z|^{-2}\frac{\diff A(z)}{1-|z|^2}
\\
\ge\frac49\int_{\D(0,\frac12)}\big(|g(z)|(1-|z|^2)-
1\big)^2\frac{\diff A(z)}{1-|z|^2}\ge\tfrac{4}{9}\rho_1,
\end{multline*} 
which expresses the asserted estimate.
\end{proof}

\subsection{The global estimate from below}
We now turn the local estimate into a global one.

\begin{proof}[Proof of Theorem \ref{thm-mddhzp}]
As mentioned in the introduction, the estimate from above 
$\rho_{\mathbb{H}}\le 0.12087$ follows from the work of Astala, Ivrii,
Per\"al\"a, and Prause \cite{AIPP}, 
so it remains to establish the estimate from below.
Our starting point is Proposition \ref{prop-2.2}, which tells us that 
there exists an absolute constant $\rho_2$, with $0<\rho_2<1$,
such that for each $\lambda\in\D$ and each holomorphic function $h:\D\to\C$,
\begin{equation}
\rho_2\le\int_{\D(0,\frac12)}
\big(|h(z)|(1-|z|^2)-|1-\bar\lambda z|^{-1}\big)^2
\frac{\diff A(z)}{1-|z|^2}.
\label{eq-2.1}
\end{equation}
Given $\lambda\in\D$, we introduce the mapping $\gamma_\lambda$ 
given by
\[
\gamma_\lambda(\zeta):=\frac{\lambda-\zeta}{1-\bar\lambda\zeta},
\]
which is an involutive M\"obius automorphism of the unit disk $\D$ 
(so that $\gamma_\lambda\circ\gamma_\lambda(\zeta)=\zeta$).
Moreover, a direct calculation shows that the derivative of $\gamma_\lambda$ 
equals
\[
\gamma_\lambda'(\zeta)=-\frac{1-|\lambda|^2}{(1-\bar\lambda\zeta)^2}.
\]
We make the auxiliary observation that
\begin{equation}
1-|\gamma_\lambda(\zeta)|^2=
\frac{(1-|\lambda|^2)(1-|\zeta|^2)}{|1-\bar\lambda\zeta|^2}=(1-|\zeta|^2)
|\gamma_\lambda'(\zeta)|.
\label{eq-2.2}
\end{equation}
Let $h_\lambda$ denote the holomorphic function
\[
h_\lambda(\zeta):=(-\gamma'_\lambda(\zeta))^{3/2}h\circ\gamma_\lambda(\zeta)=
\frac{(1-|\lambda|^2)^{3/2}}{(1-\bar\lambda\zeta)^3}\,
h\bigg(\frac{\lambda-\zeta}{1-\bar\lambda\zeta}\bigg),
\]
and observe that by \eqref{eq-2.2} and the change-of-variables formula,
\begin{multline}
\int_{\gamma_\lambda(\D(0,\frac12))}\big(|h_\lambda(\zeta)|(1-|\zeta|^2)-1\big)^2
\frac{\diff A(\zeta)}{1-|\zeta|^2}
\\
=\int_{\gamma_\lambda(\D(0,\frac12))}\bigg(|h\circ\gamma_\lambda(\zeta)|
(1-|\gamma_\lambda(\zeta)|^2)-|\gamma_\lambda'(\zeta)|^{-1/2}\big)^2
\frac{|\gamma_\lambda'(\zeta)|^2}{1-|\gamma_\lambda(\zeta)|^2}\diff A(\zeta)
\\
=\int_{\D(0,\frac12)}\big(|h(z)|(1-|z|^2)-|\gamma_\lambda'(z)|^{1/2}
\big)^2\frac{\diff A(z)}{1-|z|^2}
\\
=(1-|\lambda|^2)
\int_{\D(0,\frac12)}\big(|\tilde h(z)|(1-|z|^2)-|1-\bar\lambda z|^{-1}
\big)^2\frac{\diff A(z)}{1-|z|^2}\ge(1-|\lambda|^2)\rho_2,
\label{eq-2.4}
\end{multline}
where $\tilde h(z)=(1-|\lambda|^2)^{-1/2}h(z)$, and, in the last step, 
we invoked \eqref{eq-2.1} with $\tilde h$ in place of $h$. 
If we write $H$ in place of $h_\lambda$, we obtain from 
\eqref{eq-2.4} that 
\begin{equation*}
\int_{\gamma_\lambda(\D(0,\frac12))}\big(|H(\zeta)|(1-|\zeta|^2)-1\big)^2
\frac{\diff A(\zeta)}{1-|\zeta|^2}\ge (1-|\lambda|^2)\rho_2.
%\label{eq-2.4''}
\end{equation*}
This inequality holds in fact for every holomorphic function $H:\D\to\C$,
since for given $H$ it is possible to write down $h$ such that 
$H=h_\lambda$.
We are of course free to integrate both sides with respect to a positive 
finite measure:
\begin{equation}
\int_{\D(0,r^{4})}
\int_{\gamma_\lambda(\D(0,\frac12))}\big(|H(\zeta)|(1-|\zeta|^2)-1\big)^2
\frac{\diff A(\zeta)\diff A(\lambda)}{(1-|\zeta|^2)(1-|\lambda|^2)^2}
\ge \rho_2\int_{\D(0,r^{4})}\frac{\diff A(\lambda)}{1-|\lambda|^2}=\rho_2
\log\frac{1}{1-r^{8}}.
\label{eq-2.4''.1}
\end{equation}
Moreover, we calculate that 
\begin{equation*}
\int_{\D(0,r^{4})}1_{\gamma_\lambda(\D(0,\frac12))}(\zeta)
\frac{\diff A(\lambda)}{(1-|\lambda|^2)^2}
=\int_{\D(0,r^{4})}1_{\D(0,\frac12)}(\gamma_\zeta(\lambda))
\frac{\diff A(\lambda)}{(1-|\lambda|^2)^2}\le 1_{\D(0,r)}(\zeta)\log\frac43,
%\label{eq-2.4''.2}
\end{equation*}
for $r_1<r<1$, provided $r_1<1$ is close enough to $1$, where the bound by
$\log\frac43$ is a consequence of hyperbolic invariance, and the fact
the left hand side vanishes is a consequence of a simple comparison of 
the hyperbolic lengths of the intervals $[0,\frac12]$ and $[r^4,r]$ 
(the latter interval is longer for $r_1<r<1$). It now follows from
\eqref{eq-2.4''.1} that
\begin{equation}
\frac{\rho_2}{\log\frac43}\log\frac{1}{1-r^{8}}
\le\int_{\D(0,r)}\big(|H(\zeta)|(1-|\zeta|^2)-1\big)^2
\frac{\diff A(\zeta)}{1-|\zeta|^2},\qquad r_1<r<1.
\label{eq-2.4''.3}
\end{equation}
Since with $\rho_2=\frac49\rho_1$ and $\rho_1=2.08\times10^{-5}$, we have
the inequality of constants 
\[
\frac{\rho_2}{\log\frac43}>3.21\times10^{-5}.
\]
Moreover, since
\[
\lim_{r\to1^-}\frac{\log\frac{1}{1-r^{8}}}{\log\frac{1}{1-r^{2}}}=1,
\]
the claimed assertion follows from \eqref{eq-2.4''.3}.
\end{proof}

\section{Geometric packing of zeros}
\label{sec-imprCS}

In this section, we develop a rather general type of extremal problems in 
complex analysis, which we call \emph{geometric zero packing problems}. We 
first explain the  planar zero packing problem, and then turn to the 
hyperbolic zero packing problem, which was mentioned earlier. 

\subsection{A packing problem for zeros in the plane}
\label{subsec-packingproblem1}
We first study a packing problem for zeros pertaining to the Bargmann-Fock 
space of entire functions. 
It is well-known that there is no entire function $f:\C\to\C$ such that
$\log|f(z)|=|z|^2$. The reason is that in the sense of distribution theory, 
$\hDelta\log|f|$ is a sum of half unit point masses located at the zeros 
of $f$ (counting multiplicities), so that off the zeros, $\log|f|$ is 
harmonic, while $\hDelta|z|^2=1$. In particular, the nonnegative function 
$(|f(z)|\e^{-|z|^2}-1)^2$ cannot vanish on a nonempty open set, and if
$f(z)$ is a polynomial in $z$, then in particular 
$|f(z)|=\mathrm{O}(\e^{|z|^2})$ as $|z|\to+\infty$, and so we would know 
that the \emph{discrepancy function}
\[
\Psi_f(z):=(|f(z)|\e^{-|z|^2}-1)^2
\] 
is bounded. Note also that for the trivial function $f=0$, the 
discrepancy $\Psi_f=\Psi_0$ equals the constant $1$. It is now a natural 
question to ask \emph{how small the discrepancy $\Psi_f$ can be, on average}, 
since it cannot vanish on nonempty open sets. 
So, we consider the minimal average of $\Psi_f$ in a disk $\D(0,R)$ of large 
radius $R$:
\begin{equation}
\rho_{\C}(R):=\inf_f\frac{1}{R^2}\int_{\D(0,R)}\Psi_f(z)\diff A(z)
=\inf_f\frac{1}{R^2}\int_{\D(0,R)}(|f(z)|\e^{-|z|^2}-1)^2\diff A(z),
\label{eq-packR}
\end{equation}
where the infimum is taken over all polynomials $f$. Here, the use of
the origin as the base point is inessential since in \eqref{eq-packR},
we can take the infimum over all entire $f$ without changing the value of 
$\rho_\C(R)$, and, in addition, by the change-of-variables formula, 
we have for $a\in\C$ the translation invariance property
\[
\frac{1}{R^2}\int_{\D(a,R)}\Psi_f(z)\diff A(z)
=\frac{1}{R^2}\int_{\D(0,R)}\Psi_{f_{\langle a\rangle}}(z)\diff A(z),
\]  
where $f_{\langle a\rangle}$ denotes the Fock-space translate  
$f_{\langle a\rangle}(z):=\e^{-|a|^2-2\bar a z}f(a+z)$. In view of Lemma
\ref{lem-basic0}, this discrepancy density $\rho_\C(R)$ gives the best 
constant for the improved Cauchy-Schwarz inequality
\[
\bigg\{\frac{1}{R^2}\int_{\D(0,R)}|f(z)|\e^{-|z|^2}\diff A(z)\bigg\}^{2}\le
(1-\rho_\C(R))\frac{1}{R^2}\int_{\D(0,R)}|f(z)|^2\e^{-2|z|^2}\diff A(z).
\]

\begin{defn}
For the above problem, the 
the \emph{minimal discrepancy density for planar zero packing} is 
$\rho_{\C}:=\liminf_{R\to+\infty}\rho_\C(R)$.
\label{defn-odp1}
\end{defn}

\begin{rem}
(a) The limsup might be considered as well, but we expect it to 
equal the liminf.

\noindent (b) In more geometric terms, the quantity $\rho_{\C}$ is a measure 
of how well the planar metric $\diff s=|\diff z|$ can be approximated
by a metric obtained in the following manner:
take the surface with the Gaussian metric $\diff s=|f(z)|\e^{-|z|^2}|\diff z|$, 
where $f$ is a polynomial, which then has curvature  
\[
-4\hDelta\log(|f(z)|\e^{-|z|^2})=4-2\sum_{j}\delta_{w_j},
\]
where $\{w_j\}_j$ are the zeros of $f$, and $\delta_\xi$ is the unit mass 
delta function at $\xi\in\C$. The point masses in the curvature correspond
to ``branch'' or ``flabby cone'' points with an opening of $4\pi$ in case
of simple zeros, and more generally, an opening of $2(n+1)\pi$ for a zero 
of multiplicity $n$.
\label{rem-geometry}
\end{rem}

Since polynomials are determined up to a multiplicative constant by their 
zeros, we feel that the terminology ``geometric zero packing'' or ``geometric 
packing of zeros'' is appropriate. 

\begin{prob}
Determine the value of $\rho_\C$. For which 
configurations of zeros of the polynomial $f$ is it asymptotically attained? 
Is the equilateral triangular lattice optimal asymptotically?
\end{prob}

In Conjecture \ref{conj-betaAbrik} below we attribute the conjecture that the
equilateral triangular lattice is optimal (in the more general context of 
an exponent $\beta$) to Abrikosov.  
We illustrate with an equilateral triangular tesselation in Figure 
\ref{fig-1}.

\begin{figure}[h]
\includegraphics
[
%trim = 0cm 15cm 0cm 15cm, clip=true,
%trim = 0cm 8cm 0cm 8cm, clip=true,
%totalheight=0.42\textheight
height=6cm, width=8cm]
{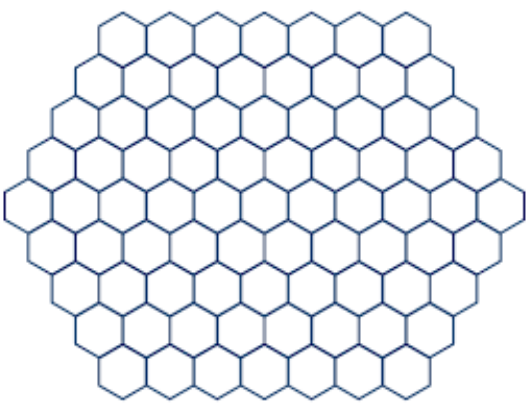}
%{hexagonal.pdf}
\caption{Illustration of the honeycomb lattice, with a zero to be placed at
the center of each hexagon to produce the equilateral triangular tesselation.}
\label{fig-1}
\end{figure}

The Weierstrass sigma function $\sigma(z)$, which arises in the analysis
of the Weierstrass $\wp(z)$ function, can be used to analyze the 
asymptotic discrepancy density for the equilateral triangular lattice
(see, e. g., the exposition of Ahlfors \cite{Ahlfors}).
Let $\omega_1,\omega_2\in\C$ be the periods associated with the lattice
\[
\Lambda_{\omega_1,\omega_2}:=\omega_1\Z+\omega_2\Z,
\]
where it is assumed that $\omega_1,\omega_2$ are $\R$-linearly independent.
For simplicity, we 
suppose $\omega_1$ is real with $\omega_1>0$, and that 
$\im \omega_2>0$.
The related Weierstrass function $\wp(z)$ then has the complex periods 
$\omega_1$ and $\omega_2$.  
We recall the formula for the associated 
sigma function (see, e. g., \cite{Ahlfors}):
\[
\sigma(z):=z\prod_{0\ne\omega\in\Lambda_{\omega_1,\omega_2}}
\bigg(1-\frac{z}{\omega}\bigg)\exp\bigg\{
\frac{z}{\omega}+\frac{z^2}{2\omega^2}
\bigg\}.
\]  
The function $\sigma(z)$ is entire, with periodicity-type formulae
\begin{equation}
\sigma(z+\omega_j)=-\sigma(z)\exp\big(\lambda_j(z+\tfrac{\omega_j}{2})\big),
\qquad j=1,2,
\label{eq-period1}
\end{equation}
where the constants are $\lambda_j:=2\zeta(\tfrac{\omega_j}2)$, as
expressed in terms of the logarithmic derivative 
$\zeta(z):=\sigma'(z)/\sigma(z)$ known as the Weierstrass zeta function. 
The relationship with the classical Weierstrass function is 
$\zeta'(z)=-\wp(z)$.
We consider in the planar zero packing problem the function $f$
\[
f(z):=a\,\e^{\xi z+\eta z^2}\sigma(z),
\]
where $a$ is a positive amplitude constant, and $\xi,\eta\in\C$ are parameters
to be determined. We would like the associated function
\begin{equation}
\e^{-|z|^2}|f(z)|=a\,\e^{-|z|^2+\re(\xi z+\eta z^2)}|\sigma(z)|
\label{eq-sigmafunction1}
\end{equation}
to be periodic with the two complex periods $\omega_1,\omega_2$. 
This is only possible if the density of the lattice $\Lambda_{\omega_1,\omega_2}$
has a normalized area of the fundamental rhombus $\mathcal{D}$ which equals 
$\frac12$. In terms of the periods, the requirement is that 
\[
\omega_1\im \omega_2=\frac{\pi}{2},
\]
which is related with the classical Legendre relation 
$\lambda_1\omega_2-\lambda_2\omega_1=\imag2\pi$.
Under this condition, it is indeed possible to specify values for the constants
$\xi,\eta$ such that the function \eqref{eq-sigmafunction1} gets to be
doubly periodic. If we choose 
\[
\omega_1:=\frac{\pi^{1/2}}{3^{1/4}},\quad\omega_2:=\frac{\pi^{1/2}}{3^{1/4}}
\e^{\imag \pi/3},
\]
which makes $\Lambda_{\omega_1,\omega_2}$ an equilateral triangular tiling of
the plane, 
this condition is fulfilled, and the appropriate values of the constants 
$\xi,\eta$ are then
%Korrektion tag bort faktorn 2 till höger
\[
\xi=0,\quad \eta=1-\frac{\zeta(\frac{\omega_1}{2})}{\omega_1}=
\frac{\bar\omega_2}{\omega_2}-\frac{\zeta(\frac{\omega_2}{2})}{\omega_2}.
\]
The asymptotic discrepancy density associated with this
particular choice can then be calculated over a single fundamental rhombus
$\mathcal{D}$ for the tiling $\C/\Lambda_{\omega_1,\omega_2}$,  
\begin{multline}
\lim_{R\to+\infty}\frac{1}{R^2}\int_{\D(0,R)}\big(\e^{-|z|^2}|f(z)|-1\big)^2
\diff A(z)=\frac{1}{|\mathcal{D}|_A}\int_{\mathcal{D}}
\big(\e^{-|z|^2}|f(z)|-1\big)^2\diff A(z)
\\
=\frac{1}{|\mathcal{D}|_A}\int_{\mathcal{D}}
\big(a\,\e^{-|z|^2+\re(\eta z^2)}|\sigma(z)|-1\big)^2\diff A(z),
\label{eq-asdensityW1}
\end{multline}
where we are free to minimize over the parameter $a$. Here, as mentioned
previously, $|\mathcal{D}|_A=\frac12$ is the normalized area of the 
fundamental rhombus.
The right-hand side of \eqref{eq-asdensityW1} is in a natural sense the
average of $\Psi_f$ over the torus $\C/\Lambda_{\omega_1,\omega_2}$. 

\begin{rem}
%{\rm(a)} 
Numerical implementation of the above integral 
\eqref{eq-asdensityW1}, minimized over the parameter $a$, 
was carried out by Wennman \cite{Wenn2} using 
{\sc Mathematica}, which resulted in the value $0.061203\ldots$, so that in
particular, $\rho_\C\le 0.061203\ldots$. We suggest that this
inequality is actually an equality.
\label{rem-monopole}
\end{rem}

\subsection{The stochastic minimization approach to 
planar zero packing}

It is difficult to know offhand what kind of packing of zeros would 
be optimal for the calculation of the asymptotic minimal discrepancy density 
$\rho_\C$. A reasonable approach is to let a stochastic process do
the digging for the optimal configuration, as in the so-called \emph{Bellman
function method}, exploited repeatedly in harmonic analysis (see, e.g., the
survey \cite{Ose}). First, we note that the assumption that the function
$f$ should be a polynomial in \eqref{eq-packR} is excessive, since polynomials
are dense in many spaces of holomorphic functions. In particular, the
minimal local density $\rho_\C(R)$ is unperturbed if we minimize e.g. over 
all entire functions $f$. Here, we will replace $f$ by a Gaussian analytic 
function (GAF) with close-to-optimal behavior. To set the notation, we let 
$N_\C(0,1)$ stand for the standard rotationally invariant Gaussian 
distribution with probability measure $\e^{-|\zeta|^2}\diff A(\zeta)$ in the 
plane $\C$. 
We pick independent copies $\xi_j\in N_\C(0,1)$ for $j=0,1,2,\ldots$, and
let $F$ be the GAF process \cite{HKPV}
\[
F(z):=\sum_{j=0}^{+\infty}\frac{\xi_j}{\sqrt{j!}}\, 2^{j/2}z^j,\qquad z\in\C.
\]
%where $c$ is a positive scaling constant that we may adjust. 
The way things are set up,  
\[
F(z)\e^{-|z|^2}
\]
has the same standard complex normal distribution $N_\C(0,1)$ irrespective of 
the point $z\in\C$. 
Given a positive amplitude constant $b$, we observe that the associated density
\[
\rho_{bF}(R):=\frac{1}{R^2}\int_{\D(0,R)}\Psi_{bF}(z)\diff A(z)
=\frac{1}{R^2}\int_{\D(0,R)}(b|F(z)|\e^{-|z|^2}-1)^2\diff A(z)
\]
is stochastic, and we may ask for the number  
\[
\rho_\C^{\mathrm{prob}}(R):=
\inf\big\{t>0:\,\exists\, b>0\,\,\text{such that}\,\,
\mathbb{P}(\rho_{bF}(R)\le t)>0\big\}
\]
where $\mathbb{P}(e)$ stands for the probability of the event $e$. Then 
clearly, $\rho_\C(R)\le\rho_\C^{\mathrm{prob}}(R)$, and we actually have equality.
%conjecture the following.
\medskip

\begin{prop}
%\emph{
We have that $\rho_\C(R)=\rho_\C^{\mathrm{prob}}(R)$, and hence that
$\rho_\C=\liminf_{R\to+\infty}\rho_\C^{\mathrm{prob}}(R)$.
%}
\label{propconj-1}
\end{prop} 

\begin{proof}[Proof sketch]
We will fix the parameter $b:=1$, which only makes things harder.
Since every holomorphic $f$ modulo $\mathrm{O}(z^{N+1})$  occurs with 
positive density in the process $F(z)$ (i.e., every finite sequence of the 
first $N$ Taylor coefficients occurs with positive
density in the stochastic sequence $2^{j/2}\xi_j/\sqrt{j!}$, $j=0,\ldots,N$), 
and for fixed $R$, the infimum in \eqref{eq-packR} is almost achieved by  
polynomials of sufficiently high degree, we can conclude that 
$\rho_\C(R)=\rho_\C^{\mathrm{prob}}(R)$ should hold. 
The influence of the remaining stochastic Taylor
coefficients $2^{j/2}\xi_j/\sqrt{j!}$ for $j>N$ to the stochastic integral 
$\rho_{F}(R)=\rho_{bF}(R)$ can be shown to be insignificant for big enough $N$. 
\end{proof}

Let $\expect$ stand for the expectation, and observe that
\begin{multline*}
\expect\rho_{bF}(R)=\frac{1}{R^2}\int_{\D(0,R)}\expect\Psi_{bF}(z)\diff A(z)
=\frac{1}{R^2}\int_{\D(0,R)}(b^2\e^{-2|z|^2}\expect|F(z)|^2
-2b\e^{-|z|^2}\expect|F(z)|+1)\diff A(z)
\\
=\frac{1}{R^2}\int_{\D(0,R)}(b^2-b\sqrt{\pi}+1)\diff A(z)
=b^2-b\sqrt{\pi}+1=(b-\tfrac{1}{2}\sqrt{\pi})^2+1-\frac{\pi}{4},
\end{multline*}
which tells us that the expected value of $\rho_{bF}(R)$ is minimized for 
the amplitude $b=\frac12\sqrt{\pi}$, and that the minimal expected value
equals $1-\frac{\pi}{4}=0.214\ldots$. We obtain immediately an upper bound for
$\rho_\C$:

\begin{prop}
We have the following bounds: 
\[
\rho_\C=\liminf_{R\to+\infty}\rho_\C^{\mathrm{prob}}(R)\le
\liminf_{R\to+\infty}\min_{b>0}\expect\rho_{bF}(R)=1-\frac{\pi}{4}.
\]
\end{prop}

It is of course na\"\i{}ve to believe that a simple expectation calculation 
would supply strong information. However, if we could get a grasp of 
the higher moments $\expect(\rho_{bF}(R))^k$ for $k=2,3,4,\ldots$ things 
would be different. This is related with the ``moment support bounding 
problem''.

\begin{rem}
By the planar analogues of the methods we develop in Section 
\ref{sec-appendix} for the hyperbolic setting, it can established that 
$\rho_\C>0$. 
\label{rem-4.3.3}
\end{rem}

%\end{document}
\subsection{Hyperbolic zero packing}

We now return to the hyperbolic zero packing problem. It is as before 
related to the possible improvement in the Cauchy-Schwarz inequality, in line
with Lemma \ref{lem-basic0}. This time, the discrepancy is given by
\[
\Phi_{f}(z):=\big((1-|z|^2)|f(z)|-1\big)^2,\qquad z\in\D,
\]
for a polynomial $f$, or more generally, $f$ which is holomorphic in $\D$. 
Again, $\Phi_f(z)=0$ is the same as the equality $(1-|z|^2)|f(z)|=1$ 
which has no holomorphic solution $f$.  
The reason is the same as before: $\log|f|$ is harmonic off the zeros 
of $f$, while $\hDelta\log\frac{1}{1-|z|^2}=(1-|z|^2)^{-2}>0$.
The average density of $\Phi_f$ with respect to the hyperbolic area element
$\diff A_{\mathbb{H}}(z):=(1-|z|^2)^{-2}\diff A(z)$ is the ratio
\[
\frac{\int_{\D(0,r)}\Phi_f\diff A_{\mathbb{H}}}{\int_{\D(0,r)}\diff A_{\mathbb{H}}}
=\frac{\int_{\D(0,r)}\Phi_f\diff A_{\mathbb{H}}}{\frac{r^2}{1-r^2}}
\]
and we could consider the inf over $f$ and then the liminf as $r\to1^-$.
However, since in hyperbolic geometry the length of boundary of $\D(0,r)$ is
substantial, the cutoff is a bit rough. To reduce the boundary effects, 
we instead average further before taking the ratio (compare, e.g. with
Seip's densities \cite{Seip}),
\begin{equation}
\frac{\int_0^r\int_{\D(0,t)}\Phi_f\diff A_{\mathbb{H}}\frac{\diff t}{t}}
{\int_0^r\int_{\D(0,t)}\diff A_{\mathbb{H}}\frac{\diff t}{t}}
=\frac{\int_{\D(0,r)}\Phi_f(z)(1-|z|^2)\diff A_{\mathbb{H}}(z)}
{\log\frac{1}{1-r^2}}.
\label{eq-hypaverage}
\end{equation}
So, the minimal average discrepancy we are after is, for $0<r<1$,
\begin{equation}
\rho_{\mathbb{H}}(r):=\frac{1}{\log\frac{1}{1-r^2}}
\inf_f\int_{\D(0,r)}\Phi_{f}(z)\frac{\diff A(z)}{1-|z|^2}=
\frac{1}{\log\frac{1}{1-r^2}}
\inf_f\int_{\D(0,r)}\big((1-|z|^2)|f(z)|-1\big)^2\frac{\diff A(z)}{1-|z|^2},
\label{eq-rho3}
\end{equation}
where the infimum is over all polynomials $f$, or, which gives the same 
result, over all holomorphic functions $f:\D\to\C$. In view of Lemma 
\ref{lem-basic0},
this discrepancy is the best constant for the improved Cauchy-Schwarz 
inequality
\begin{equation}
\bigg\{\int_{\D(0,r)}|f|\diff A\bigg\}^2\le (1-\rho_{\mathbb{H}}(r))
\log\frac{1}{1-r^2}\times\int_{\D(0,r)}|f(z)|^2(1-|z|^2)\diff A(z).
%\log\frac{1}{1-r^2}
\label{eq-CS-improvhyp1}
\end{equation}

\begin{defn}
For the above problem, the 
the \emph{minimal discrepancy density for hyperbolic zero packing} is 
$\rho_{\mathbb{H}}:=\liminf_{r\to1^-}\rho_{\mathbb{H}}(r)$.
\label{defn-odp2}
\end{defn}

\begin{prob}
Determine the value of $\rho_{\mathbb{H}}$. For which  
configurations of zeros of the function $f$ is it asymptotically attained? 
Is a lattice configuration optimal asymptotically?
\label{prob-2.001}
\end{prob}

Although the zero packing problem involves global issues, it probably has
some analogies with the more local hyperbolic circle packing problems (see,
e.g., \cite{Steph}).

\subsection{Hyperbolic Sch\"afli tilings}
\label{subsec-tiling}

One strategy for hyperbolic zero packing would be to pack according to a 
lattice configuration, for instance given by a tiling of the disk by 
hyperbolic regular $p$-gons with $q$ tiles meeting at each vertex 
(provided $p,q\ge3$). 
We illustrate with a fourfold octagonal ($p=8,q=4$) tiling of Figure 
\ref{fig-3}. Such a \emph{Sch\"afli tiling} exists provided that 
$a_{p,q}:=\frac14(p-2-\frac{2p}{q})>0$, and then the hyperbolic 
$\diff A_{\mathbb{H}}$-area of the $p$-gon is precisely $a_{p,q}$.  
A Sch\"afli tile is not always a fundamental domain for a Fuchsian group
$\Gamma$, as this happens if and only if the Poincar\'e cycle condition is 
fulfilled (see \cite{Maskit}). 

We are particularly interested in a Sch\"afli tiling which has normalized
area $a_{p,q}:=\frac14(p-2-\frac{2p}{q})=\frac12$, because this is 
analogous to what we saw with the lattice tiling of Subsection 
\ref{subsec-packingproblem1}, and would allow us to fit in exactly one zero per
tile, located at the hyperbolic center point of each tile. This area
condition can be written in the form 
\[
\frac{4}{p}+\frac{2}{q}=1,
\]
which has positive integer solutions $(p,q)$ of the form $(5,10)$, $(6,6)$,
$(8,4)$, and $(12,3)$, and generalized solutions $(4,\infty)$ and 
$(\infty,2)$. In particular, the $(8,4)$ tiling of  Figure \ref{fig-3} has
tiles with $\diff A_{\mathbb{H}}$-area $\frac12$. Such a tiling cannot 
correspond to a fundamental domain because the Poincar\'e cycle condition 
is not fulfilled. However, if we really want to, we can still glue together
the edges of the octagon in the standard fashion (which means that every
other edge gets glued pairwise, cyclically), but the resulting compact 
surface then obtains an irregular point with angle $4\pi$ around it 
(we might call it a \emph{branching point}, a \emph{ramified point}, or a 
\emph{flabby cone point}).  
 Another rather immediate way to see it is to use the Gauss-Bonnet 
theorem, which gives that the $\diff A_{\mathbb{H}}$-area of a fundamental
domain equals the integer $g-1\ge1$, where $g$ is the genus of the 
corresponding compact Riemann surface. 

\begin{figure}[h]
\includegraphics
[
trim = 0cm 18cm 0cm 18cm, clip=true,
%trim = 0cm 8cm 0cm 8cm, clip=true,
%totalheight=0.42\textheight
height=5.4cm, width=5.3cm]
{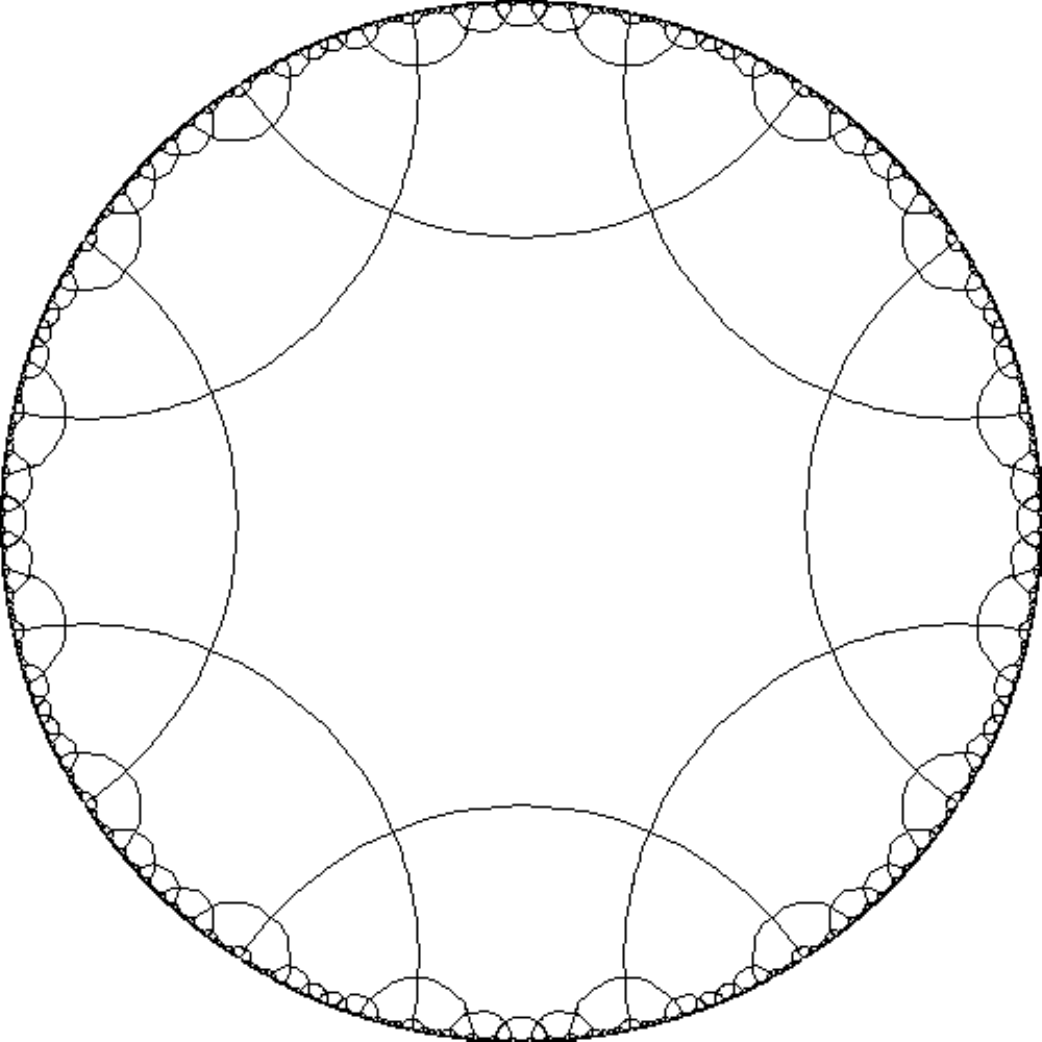}
\caption{
%Illustration of the fivefold pentagonal tiling.
Illustration of the fourfold octagonal tiling $(p,q)=(8,4)$.
}
\label{fig-3}
\end{figure}

\subsection{The stochastic minimization approach to 
hyperbolic zero packing}
\label{subsec-stochhyper}

As in the planar case, it is difficult to know offhand what kind of 
packing of zeros would be optimal for the calculation of the asymptotic 
minimal discrepancy density $\rho_{\mathbb{H}}$. Again, a reasonable 
approach is to let a stochastic process do the digging for the optimal 
configuration, and we look for an appropriate GAF process to supply 
random holomorphic functions in $\D$. 
As before, we pick independent copies $\eta_j\in N_\C(0,1)$ for 
$j=0,1,2,\ldots$, and let $G$ be the GAF process 
\[
G(z):=\sum_{j=0}^{+\infty}\eta_j\sqrt{j+1}\,z^j,\qquad z\in\C.
\]
It is well-known that $(1-|z|^2)G(z)$ has complex normal distribution 
$N_\C(0,1)$ irrespective of the point $z\in\C$. 
Given a positive amplitude constant $b$, we observe that the associated 
density
\[
\rho_{bG}(r):=\frac{1}{\log\frac{1}{1-r^2}}\int_{\D(0,r)}\Phi_{bG}(z)
\frac{\diff A(z)}{1-|z|^2}
=\frac{1}{\log\frac{1}{1-r^2}}\int_{\D(0,r)}(b|G(z)|(1-|z|^2)-1)^2
\frac{\diff A(z)}{1-|z|^2}
\]
is stochastic, and we may ask for the number  
\[
\rho_{\mathbb{H}}^{\mathrm{prob}}(r)
:=\inf\big\{t>0:\,\exists\, b>0\,\,\text{such that}\,\,
\mathbb{P}(\rho_{bG}(r)\le t)>0\big\}.
\]
Then clearly, $\rho_{\mathbb{H}}(r)\le\rho_{\mathbb{H}}^{\mathrm{prob}}(r)$, 
and in analogy with Proposition \ref{propconj-1}, we have equality.

\begin{prop}
We have that $\rho_{\mathbb{H}}(r)=\rho_{\mathbb{H}}^{\mathrm{prob}}(r)$, and hence
$\rho_{\mathbb{H}}=\liminf_{r\to1^-}\rho_{\mathbb{H}}^{\mathrm{prob}}(r)$.
\label{propconj-2}
\end{prop} 

The proof is essentially identical to that of Proposition \ref{propconj-1},
and left to the reader.
As for the value of the asymptotic density $\rho_{\mathbb{H}}$, we observe that
\begin{multline}
\expect\rho_{bG}(r)=\frac{1}{\log\frac{1}{1-r^2}}
\int_{\D(0,r)}\expect\Phi_{bG}(z)\frac{\diff A(z)}{1-|z|^2}
\\
=\frac{1}{\log\frac{1}{1-r^2}}\int_{\D(0,r)}(b^2(1-|z|^2)^2\expect|G(z)|^2
-2b(1-|z|^2)\expect|G(z)|+1)\frac{\diff A(z)}{1-|z|^2}
\\
=\frac{1}{\log\frac{1}{1-r^2}}\int_{\D(0,r)}(b^2-b\sqrt{\pi}+1)
\frac{\diff A(z)}{1-|z|^2}
=b^2-b\sqrt{\pi}+1=(b-\tfrac{1}{2}\sqrt{\pi})^2+1-\frac{\pi}{4},
\end{multline}
which tells us that the expected value of $\rho_{bG}(R)$ is minimized for 
the amplitude $b=\frac12\sqrt{\pi}$, and that the minimal expected value
equals $1-\frac{\pi}{4}=0.214\ldots$. We obtain immediately an upper bound for
$\rho_{\mathbb{H}}$, which is the same as in the planar case. This bound is
substantially weaker than the one found by Astala, Ivrii, Per\"al\"a, and 
Prause in \cite{AIPP} ($\rho_{\mathbb{H}}\le0.12087$).  

\begin{prop}
We have the following bounds: 
\[
\rho_{\mathbb{H}}=\liminf_{r\to1^-}\rho_{\mathbb{H}}^{\mathrm{prob}}(r)\le
\liminf_{r\to1^-}\min_{b>0}\expect\rho_{bG}(r)=1-\frac{\pi}{4}.
\]
\label{prop-estabove1.01}
\end{prop}

\section{Geometric zero packing for exponent $\beta$}
\label{sec-beta}

In this section, we introduce, for a positive real $\beta$, the 
$\beta$-exponent analogues of the planar and hyperbolic
zero packing problems considered in Section \ref{sec-imprCS}. 

\subsection{Planar zero packing for exponent $\beta$}

We introduce the $\beta$-exponent deformation of the density $\rho_{\C}$.

\begin{defn}
For a positive real $\beta$, let $\rho_\beta(\C)$ be the density
\begin{equation}
\rho_\beta(\C):=\liminf_{R\to+\infty}\inf_{f}\frac{1}{R^2}\int_{\D(0,R)}
(|f(z)|^\beta\e^{-|z|^2}-1)^2\diff A(z),
\label{eq-rhobeta1.01}
\end{equation}
where the infimum is taken over all polynomials $f$. We call this number
$\rho_\beta(\C)$ the \emph{$\beta$-exponent minimal discrepancy density 
for planar zero packing}. 
\end{defn}

In view of Lemma \ref{lem-basic0}, the density $\rho_\beta(\C)$ may also
be expressed as the minimal average ratio
\begin{equation}
\frac{1}{1-\rho_\beta(\C)}=\liminf_{R\to+\infty}\inf_f
\frac{R^{-2}\int_{\D(0,R)}|f(z)|^{2\beta}\e^{-2|z|^2}\diff A(z)}
{\bigg\{R^{-2}\int_{\D(0,R)}|f(z)|^{\beta}\e^{-|z|^2}\diff A(z)\bigg\}^2}
\label{eq-minaveratio1}
\end{equation}
where again the infimum is taken over all polynomials $f$. The instance
$\beta=2$ is the model problem considered by Aftalion, Blanc, and Nier 
\cite{ABN}. This constitutes a mathematical simplification of the energy 
functional in  the groundbreaking physical work of Abrikosov on Bose-Einstein 
condensates and type II superconductors (see \cite{Abr}). For $\beta=2$, 
it is shown rigorously in \cite{ABN} that the equilateral triangular lattice 
(see Figure \ref{fig-1}) is optimal {among the lattices}, and moreover, it is 
also shown that the corresponding Bargmann-Fock space function $f$
solves the Bargmann-Fock analogue of the standing wave equation for the cubic
Szeg\H{o} equation (for the cubic Szeg\H{o} equation, see e.g. \cite{GG} and
\cite{Poc}). This Bargmann-Fock analogue is known as the \emph{lowest Landau
level equation} (or LLL-equation), see, e.g., \cite{GerThom}, but we might
also suggest the term \emph{cubic Bargmann-Fock equation}. We take a look 
at this matter in the following subsection (Subsection \ref{subsec-Szego}). 

\begin{conj}
(Abrikosov)
The equilateral triangular lattice is optimal for $\beta$-exponent planar zero
packing for each positive $\beta$.
\label{conj-betaAbrik}
\end{conj}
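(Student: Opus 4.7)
The plan is to attack the conjecture in three stages: build explicit lattice candidates via a sigma-function Ansatz, reduce the problem to a minimization on the moduli space of lattices, and finally argue that no non-lattice configuration can do better. For the first stage, fix a lattice $\Lambda_{\omega_1,\omega_2}$ of covolume matching the background curvature (so that the zero density equals $\beta$ per unit area), and form $f(z) := \e^{\eta z^2}\sigma_\Lambda(z)$, tuning $\eta \in \C$ via the quasi-periodicity identity \eqref{eq-period1} so that the discrepancy integrand $(a|f(z)|^\beta\e^{-|z|^2}-1)^2$ becomes $\Lambda$-periodic. The asymptotic discrepancy density then collapses to a fundamental-domain integral $\Phi_\beta(\tau)$ depending only on the shape modulus $\tau=\omega_2/\omega_1\in\mathbb{H}$ and the amplitude $a>0$, which can be minimized out by elementary calculus.

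Stage two is to show that $\tau_\ast=\e^{\imag\pi/3}$ (the triangular lattice) minimizes $\Phi_\beta$ on the fundamental domain of $\mathrm{PSL}_2(\Z)$. Expanding the square,
\[
\Phi_\beta(\tau) = a^2 I_{2\beta}(\tau) - 2a\, I_\beta(\tau) + 1,\qquad
I_\gamma(\tau) := \frac{1}{|\mathcal{D}|}\int_\mathcal{D} |\sigma_\Lambda(z)|^\gamma \e^{-\gamma|z|^2/\beta + \gamma\re(\eta z^2)/\beta}\dA,
\]
where the normalization has been absorbed into $\eta$; optimizing in $a$ reduces the problem to maximizing the ratio $I_\beta(\tau)^2/I_{2\beta}(\tau)$. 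For the Abrikosov case $\beta=2$, this ratio is known via theta-function identities to be controlled by the Epstein-zeta-like lattice sum $\sum_{\lambda\in\Lambda}\e^{-\pi|\lambda|^2/|\mathcal{D}|}$, whose minimization at $\tau_\ast$ is a theorem of Montgomery; I would try to extend this via the log-convexity of $\gamma\mapsto\log I_\gamma(\tau)$ (a Jensen-type inequality) and to establish a monotonicity of the ratio along the radial direction from $\tau_\ast$ using Cauchy-Schwarz on the measure $|\sigma_\Lambda(z)|^\beta\,\dA$.

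The hard part, and by far the main obstacle, is stage three: ruling out non-lattice minimizers. Here the structure of the problem resembles the classical crystallization problem for 2D Coulomb gases. One strategy is to introduce the ``electric field'' $E(z) := -\nabla\log|f(z)|^\beta\e^{-|z|^2}$, rewrite the discrepancy using an integration-by-parts identity of Bourgade-Serfaty-Simeunovi\'c type, and obtain a lower bound involving a Coulombic interaction energy of the zeros against a uniform background; Abrikosov-Fekete lattice optimality would then follow from known rigidity results. A complementary attack would be variational: show any local perturbation of the triangular zero configuration strictly increases the density (a nondegeneracy-of-Hessian computation), then combine with a compactness and translation-invariance argument at infinity to rule out non-periodic competitors.

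Finally, for non-integer $\beta$, the expression $|\sigma_\Lambda|^{2\beta}$ lacks a clean theta-function expansion, which will likely force either an interpolation argument between integer values of $\beta$ (using log-convexity in $\beta$ of the minimum) or a direct estimate via the Green function of $\hDelta$ on the torus $\C/\Lambda$. I expect that all existing techniques handle $\beta=2$ cleanly, handle integer $\beta$ with moderate effort, but that the case of general positive $\beta$ combined with the lattice-to-general-configuration step will require a genuinely new rigidity principle.
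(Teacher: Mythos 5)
You should first be clear about the status of the statement: it is a \emph{conjecture} in the paper, attributed to Abrikosov, and the paper offers no proof of it. The paper only records (citing \cite{ABN}) that for $\beta=2$ the equilateral triangular lattice is optimal \emph{among lattices}, and uses the sigma-function Ansatz merely to produce the numerical upper bound $\rho_\C\le 0.061203\ldots$ of Remark \ref{rem-monopole}. So your proposal has to be judged as a research programme, and as such it does not close the essential gaps. Stage one is fine and coincides with what the paper already does in Subsection \ref{subsec-packingproblem1} (the Ansatz $f(z)=a\,\e^{\eta z^2}\sigma(z)$ with $\eta$ tuned via \eqref{eq-period1}, reduction to a fundamental rhombus); this only yields upper bounds. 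In stage two, the theta-function/Epstein-zeta route through Montgomery's theorem is tied to the specific Abrikosov functional at $\beta=2$, and your proposed bridge to general $\beta$ is not an argument: log-convexity of $\gamma\mapsto\log I_\gamma(\tau)$ at fixed $\tau$ says nothing about how the ratio $I_\beta(\tau)^2/I_{2\beta}(\tau)$ varies with $\tau$, and no mechanism is given that transfers extremality of $\tau_\ast=\e^{\imag\pi/3}$ from $\beta=2$ to other exponents, nor does Cauchy--Schwarz on $|\sigma(z)|^\beta\dA$ produce the claimed radial monotonicity. (A small slip besides: matching $\hDelta\log\bigl(|f|^\beta\e^{-|z|^2}\bigr)\approx0$ with half unit point masses at the zeros forces a zero density of $2/\beta$ per unit normalized area, not $\beta$.)

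Stage three is the genuine obstruction, and you essentially concede it: excluding non-lattice minimizers is a two-dimensional crystallization problem of a type that is open even for classical Coulomb-gas energies, and no existing rigidity theorem delivers the triangular lattice for this discrepancy functional; the electric-field/integration-by-parts reformulation gives lower bounds but not lattice selection, and a nondegenerate-Hessian computation at the triangular configuration can at best show local optimality, which the compactness and translation-invariance argument you sketch cannot upgrade to global optimality over non-periodic competitors. Since you yourself conclude that a ``genuinely new rigidity principle'' is required, the proposal is a reasonable map of the difficulties but it is not a proof, and the statement remains exactly what the paper says it is: an open conjecture, with only the lattice-restricted $\beta=2$ case settled in \cite{ABN}.
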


\begin{figure}[h]
\includegraphics
[
%trim = 0cm 15cm 0cm 15cm, 
clip=true,
%trim = 0cm 8cm 0cm 8cm, clip=true,
%totalheight=0.42\textheight
height=6cm, width=6cm]
{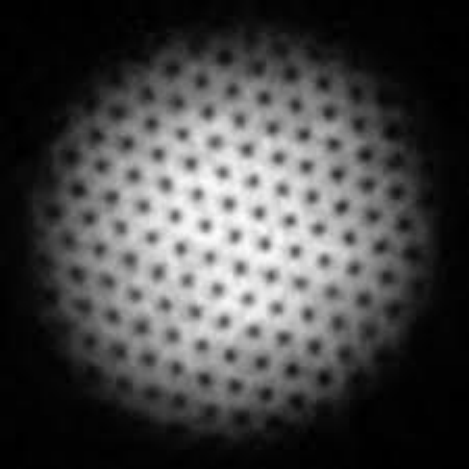}
%{hexagonal.pdf}
\caption{The equilateral triangular lattice (or honeycomb lattice) appears 
naturally in the physical context ($\beta=2$). The associated zeros are 
located inside the grayish dots.}
\label{fig-4.01}
\end{figure}

By a scaling transformation of the plane, it is easy to see that the density 
$\rho_\beta(\C)$ can be written as
\[
\rho_\beta(\C)=\liminf_{R\to+\infty}\inf_{f}\frac{1}{R^2}\int_{\D(0,R)}
(|f(z)|^\beta\e^{-\alpha|z|^2}-1)^2\diff A(z),
\] 
where the infimum is taken over all polynomials $f$, irrespectively of the 
value of the positive constant $\alpha$. 
Using this observation, we see that Conjecture \ref{conj-betaAbrik} maintains 
that 
\[
\rho_\beta(\C)=\frac{1}{|\mathcal{D}|_A}\int_{\mathcal{D}}
(|f(z)|^\beta\e^{-\beta|z|^2}-1)^2\diff A(z),\qquad 0<\beta<+\infty,
\] 
where $\mathcal{D}$ is the lattice rhombus and the function $f$ is defined 
in terms of the Weierstrass sigma function, as in \eqref{eq-asdensityW1}. 
We illustrate with the corresponding graph in Figure \ref{fig-4} communicated
by Wennman \cite{Wenn2}. For 
instance, the conjectured value for $\beta=2$ is $\rho_2(\C)=0.13763\ldots$, 
which corresponds to the 
number $\frac{1}{1-0.13763\ldots}=1.1596\ldots$ mentioned in Theorem 1.4 
of \cite{ABN}. Strictly speaking, Abrikosov did not quite go so far as
to Conjecture \ref{conj-betaAbrik}, but he did suggest it should be enough to
consider lattices and thought that the equilateral lattice was a natural 
candidate.

\medskip

\begin{figure}[h]
\includegraphics
[
%trim = 0cm 15cm 0cm 15cm, 
clip=true,
%trim = 0cm 8cm 0cm 8cm, clip=true,
%totalheight=0.42\textheight
height=4cm, width=7.5cm]
{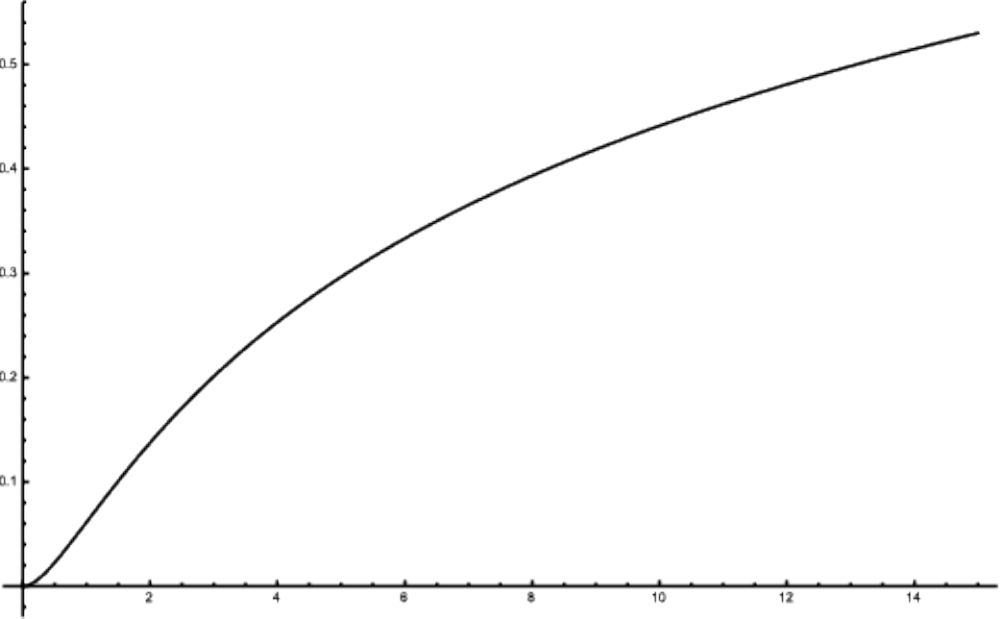}
%{hexagonal.pdf}
\caption{The graph of the density $\rho_\beta(\C)$ as a function of $\beta$
under Conjecture \ref{conj-betaAbrik}.}
\label{fig-4}
\end{figure}

As for possible monotonicity in the 
parameter $\beta$, we notice that trivially 
$\rho_{\beta}(\C)\le \rho_{k\beta}(\C)$ for $k=1,2,3,\ldots$. We can actually 
do better.

\begin{prop}
For positive reals $\beta,\beta'$ with $\beta<\beta'$, we have that 
$\rho_{\beta}(\C)\le \rho_{\beta'}(\C)$.
\label{prop-monotonic1}
\end{prop}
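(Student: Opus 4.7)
The plan is to realize the $\beta'$-problem as a rescaled instance of the $\beta$-problem and then apply an elementary pointwise inequality to compare integrands. For positive $\gamma$ and a polynomial $f$, write
\[
I_\gamma(f,R):=\frac{1}{R^2}\int_{\D(0,R)}\bigl(|f(z)|^{\gamma}\e^{-|z|^2}-1\bigr)^2\diff A(z),
\]
so that $\rho_\gamma(\C)=\liminf_{R\to+\infty}\inf_f I_\gamma(f,R)$. Set $\mu:=\sqrt{\beta'/\beta}>1$, and for each polynomial $f$ define $\tilde f(w):=f(\mu w)$, which is again a polynomial of the same degree. Performing the change of variables $z=\mu w$ in $I_{\beta'}(f,R)$ (the Jacobian contributes a factor $\mu^2=\beta'/\beta$, and $\D(0,R)$ pulls back to $\D(0,R/\mu)$) yields
\[
I_{\beta'}(f,R)=\frac{1}{R'^2}\int_{\D(0,R')}\bigl(|\tilde f(w)|^{\beta'}\e^{-(\beta'/\beta)|w|^2}-1\bigr)^2\diff A(w),\qquad R':=R/\mu.
\]
The crucial observation is that the new integrand is exactly $(u(w)^{\beta'/\beta}-1)^2$ with $u(w):=|\tilde f(w)|^\beta\e^{-|w|^2}\ge0$, since $(|\tilde f|^\beta\e^{-|w|^2})^{\beta'/\beta}=|\tilde f|^{\beta'}\e^{-(\beta'/\beta)|w|^2}$.

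Next, I would invoke the elementary pointwise inequality $(u^p-1)^2\ge(u-1)^2$ valid for all $u\ge0$ and $p\ge1$: the cases $u\ge1$ (in which $u^p-1\ge u-1\ge0$) and $0\le u\le1$ (in which $0\le 1-u\le 1-u^p$) are both immediate. Applied pointwise with $p:=\beta'/\beta\ge1$, this gives
\[
I_{\beta'}(f,R)\ge\frac{1}{R'^2}\int_{\D(0,R')}\bigl(u(w)-1\bigr)^2\diff A(w)=I_\beta(\tilde f,R').
\]

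Finally, since the correspondence $f\mapsto\tilde f$ is a bijection on the space of polynomials, taking the infimum over $f$ on the left and over $\tilde f$ on the right yields $\inf_f I_{\beta'}(f,R)\ge\inf_{\tilde f}I_\beta(\tilde f,R')$. Because $R'=R/\mu$ depends monotonically on $R$ with $R'\to+\infty$ iff $R\to+\infty$, the $\liminf$ as $R\to+\infty$ of the left-hand side agrees with that of $\inf_{\tilde f}I_\beta(\tilde f,R')$ as $R'\to+\infty$, which gives $\rho_{\beta'}(\C)\ge\rho_\beta(\C)$, as required.

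There is no substantive obstacle in the argument; the entire proof rests on recognizing that the rescaling $z=\sqrt{\beta'/\beta}\,w$ is precisely what normalizes the Gaussian weight back to $\e^{-|w|^2}$, after which the $\beta'$-integrand becomes an outer power $\ge1$ of the $\beta$-integrand and the one-line convexity-type comparison $(u^p-1)^2\ge(u-1)^2$ completes the job.
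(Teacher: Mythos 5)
Your proof is correct and follows essentially the same route as the paper: a pointwise comparison of the discrepancy integrands for the exponents $\beta$ and $\beta'$, combined with the Gaussian-normalizing rescaling (the paper's ``scaling invariance''). The only differences are cosmetic --- you make the rescaling explicit via the substitution $z=\sqrt{\beta'/\beta}\,w$ and prove the elementary inequality $(u^p-1)^2\ge(u-1)^2$ by a direct case analysis, whereas the paper cites the scaling invariance without computation and obtains the same pointwise bound from a monotonicity argument for $\log\frac{1-t^{\beta}}{1-t^{\beta'}}$.
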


\begin{proof}
It is elementary that for $0<t<+\infty$, 
\[
(1-t^{\beta})^2\le(1-t^{\beta'})^2,\qquad 0<\beta<\beta'<+\infty,
\]
and, consequently, it follows that for any polynomial $f$, 
\[
\frac{1}{R^2}\int_{\D(0,R)}
(|f(z)|^\beta\e^{-\beta|z|^2}-1)^2\diff A(z)\le\frac{1}{R^2}\int_{\D(0,R)}
(|f(z)|^{\beta'}\e^{-\beta'|z|^2}-1)^2\diff A(z), \qquad  0<\beta<\beta'<+\infty.
\]
Together with the above-mentioned scaling invariance of the density
$\rho_\beta(\C)$, this gives the asserted monotonicity in $\beta$.
\end{proof}

\begin{rem}
Given the monotonicity, it is a natural question to ask what is the limit of 
$\rho_\beta(\C)$ as $\beta\to0^+$ and as $\beta\to+\infty$. We believe that 
\[
\lim_{\beta\to0^+}\rho_\beta(\C)=0,\quad\lim_{\beta\to+\infty}\rho_\beta(\C)=1.
\]
The first assertion is intuitively clear, since a sum of small point masses 
can approximate well a uniform distribution. This can probably form the 
backbone of a rigorous proof. The intuition behind the second assertion 
is that it should be impossible to reasonably approximate a uniform 
distribution using sums of very large point masses.     
\end{rem}

\subsection{Planar zero packing for exponent $\beta=2$ 
and the cubic Bargmann-Fock equation}
\label{subsec-Szego}

Suppose $f_0$ is a minimizer of the right-hand side integral in 
\eqref{eq-rhobeta1.01} for fixed $R$. We then use a variational argument 
comparing $f_0$ with $f_0+\epsilon h$ for a polynomial $h$ and an 
$\epsilon\in\C$ with $|\epsilon|$ tending to $0$ to show that 
\[
\Piop_{1}\bigg[(1-E_1|f_0|^\beta)\frac{|f_0|^\beta}{\bar f_0}1_{\D(0,R)}
\bigg]=0.
\]
This should be interpreted with some care for $0<\beta<1$ since it might then
be the case that $|f_0|^\beta/\bar f_0$ develops bad singularities at multiple 
zeros of $f_0$ (alternatively, a separate argument would be needed to rule out
multiple zeros). 
Here, $\Piop_{\alpha}$ is the Bargmann-Fock projection on the plane $\C$ with 
the Gaussian weight $E_\alpha(z):=\e^{-\alpha|z|^2}$. More explicitly, 
$\Piop_\alpha$ is given by
\[
\Piop_{\alpha}h(z):=\alpha\int_\C \e^{\alpha z\bar w}h(w)\,\e^{-\alpha|w|^2}\diff A(w),
\qquad z\in\C. 
\]
Expecting some kind stability as $R\to+\infty$, we naturally look for 
entire solutions $f_0$ with
\begin{equation}
\Piop_{1}\bigg[(1-E_1|f_0|^\beta)\frac{|f_0|^\beta}{\bar f_0}\bigg]=0.
\label{eq-equilib1}
\end{equation}
For $\beta=2$, the equation \eqref{eq-equilib1} just says that 
\begin{equation}
f_0=\Piop_{1}[E_1f_0|f_0|^{2}].
\label{eq-equilib1.0889}
\end{equation}
The \emph{cubic Bargmann-Fock equation} 
(or \emph{LLL-equation}) we alluded to in the preceding subsection is
\begin{equation}
\imag \partial_t u=\Piop_{1}[E_1u|u|^{2}],
\label{eq-cSzeqBF1}
\end{equation}
where $u=u(t,z)$ is assumed differentiable in $t$ and entire in $z$, and
such that the integral expression defining the right-hand side of 
\eqref{eq-cSzeqBF1} is well-defined. A \emph{stationary 
wave} (= a traveling wave with zero speed) is a solution of the form 
$u(t,z)=\e^{-\imag \omega t}f(z)$, where $\omega$ is a real constant and $f$
is entire. The equation \eqref{eq-cSzeqBF1} then reduces to
\begin{equation}
\omega f=\Piop_{1}[E_1f|f|^{2}],
\label{eq-cSzeqBF2}
\end{equation}
which for the value $\omega=1$ we recognize as the equation 
\eqref{eq-equilib1.0889}. Note that for the right-hand side of 
\eqref{eq-cSzeqBF2} to be well-defined, it is enough to assume that e.g. 
$|f(z)|=\Ordo(\e^{\eta|z|^2})$ as $|z|\to+\infty$ holds for some positive real 
$\eta<\frac23$. 
We should also point out the possibility to include some higher Landau levels
as well, as in \cite{HaiHed}. Indeed, the corresponding 
\emph{higher Landau level equation} analogous to \eqref{eq-cSzeqBF1} is
\begin{equation}
\imag \partial_t u=\Piop_{1}^{\langle N\rangle}[E_1u|u|^{2}],
\label{eq-cSzeqBF1.1}
\end{equation}  
where $\Piop_{1}^{\langle N\rangle}$ is the $N$-analytic Bargmann-Fock projection,
for $N=1,2,3,\ldots$. This is the orthogonal projection on the Gaussian
weighted space $L^2(\C,E_1)$ onto the subspace of $N$-analytic functions $v$, 
which solve the partial differential equation $\bar\partial^Nv=0$.

\subsection{Hyperbolic zero packing for exponent $\beta$ 
and field strength $\alpha$}
\label{hypzero-alphabeta}
We turn to the $\beta$-exponent analogue of the hyperbolic zero packing problem,
where we also introduce the positive real parameter $\alpha$, which in a sense 
corresponds to \emph{field strength}.

\begin{defn}
For a positive reals $\alpha,\beta$, let $\rho_{\alpha,\beta}(\mathbb{H})$ 
be the density
\[
\rho_{\alpha,\beta}(\mathbb{H}):=\liminf_{r\to1^-}\inf_{f}
\frac{1}{\log\frac{1}{1-r^2}}\int_{\D(0,r)}
\big((1-|z|^2)^\alpha|f(z)|^\beta-1\big)^2\frac{\diff A(z)}{1-|z|^2},
\] 
where the infimum is taken over all polynomials $f$. We call this number
$\rho_\beta(\C)$ the \emph{$\beta$-exponent minimal discrepancy density 
for hyperbolic zero packing with field strength $\alpha$}. 
\end{defn}

The choice of parameters $\alpha=\beta=1$ corresponds to the by now familiar 
density $\rho_{\mathbb{H}}$, that is, $\rho_{1,1}(\mathbb{H})=\rho_{\mathbb{H}}$. 
The analogue of  Proposition \ref{prop-monotonic1} in this hyperbolic 
context reads as follows.

\begin{prop}
For positive reals $\alpha,\alpha',\beta,\beta'$ with $\beta<\beta'$ and 
$k\frac{\alpha'}{\alpha}=\frac{\beta'}{\beta}$ for some $k=1,2,3,\ldots$, 
we have that 
$\rho_{\alpha,\beta}(\mathbb{H})\le \rho_{\alpha',\beta'}(\mathbb{H})$.
\label{prop-monotonic2}
\end{prop}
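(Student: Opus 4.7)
The plan is to mirror the proof of Proposition~\ref{prop-monotonic1} in the hyperbolic setting. The hypothesis $k\alpha'/\alpha=\beta'/\beta$ with integer $k\ge 1$ is precisely what allows the substitution $f:=g^{k}$ (which maps polynomials to polynomials) to render the two integrands as monomials in a common variable.

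Concretely, given a polynomial $g$ admissible for $\rho_{\alpha',\beta'}(\mathbb{H})$, set $f:=g^{k}$ and introduce
\[
t=t(z):=(1-|z|^2)^{\alpha'}|g(z)|^{\beta'},\qquad z\in\D.
\]
Computing $(1-|z|^2)^{\alpha}|f(z)|^{\beta}=(1-|z|^2)^{\alpha}|g(z)|^{k\beta}$ and re-expressing it in terms of $t$, the exponent of $(1-|z|^2)$ that appears is $\alpha-k\beta\alpha'/\beta'$, which vanishes identically thanks to the hypothesis. This yields the key identity
\[
(1-|z|^2)^{\alpha}|f(z)|^{\beta}=t^{k\beta/\beta'}=t^{\alpha/\alpha'}.
\]

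The next step is the pointwise bound $(t^{c}-1)^{2}\le (t-1)^{2}$ valid for $t\ge 0$ and $c\in[0,1]$, which follows from the observation that $c\mapsto (t^{c}-1)^{2}$ is nondecreasing on $[0,+\infty)$ for each fixed $t\ge 0$ (a direct differentiation in $c$), in close parallel with the monotonicity of $F(\beta,\beta',t)$ used in the proof of Proposition~\ref{prop-monotonic1}. Applying this with $c=\alpha/\alpha'\le 1$, we obtain
\[
\bigl((1-|z|^2)^{\alpha}|f|^{\beta}-1\bigr)^{2}\le\bigl((1-|z|^2)^{\alpha'}|g|^{\beta'}-1\bigr)^{2}
\]
pointwise on $\D$.

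Integrating this estimate against $\diff A(z)/(1-|z|^2)$ over $\D(0,r)$, dividing by $\log\frac{1}{1-r^2}$, and taking the infimum over polynomials $g$ on the right-hand side (each such $g$ yielding an admissible polynomial competitor $f=g^{k}$ on the left) gives, for every $0<r<1$, an inequality between the corresponding pre-limit densities; passing to $\liminf_{r\to 1^{-}}$ then delivers $\rho_{\alpha,\beta}(\mathbb{H})\le\rho_{\alpha',\beta'}(\mathbb{H})$. The main technical point is the algebraic cancellation in the exponent of $(1-|z|^2)$, which is the entire content of the hypothesis $k\alpha'/\alpha=\beta'/\beta$; the most delicate remaining point to watch for is checking that $\alpha/\alpha'\le 1$ so that the pointwise comparison proceeds in the right direction, which is the natural compatibility condition accompanying $\beta<\beta'$ in this setup.
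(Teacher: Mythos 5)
Your argument is correct and is essentially the paper's: the published proof factors the same idea as $\rho_{\alpha,\beta}(\mathbb{H})\le\rho_{\alpha,k\beta}(\mathbb{H})\le\rho_{\alpha',\beta'}(\mathbb{H})$, first substituting $f=g^{k}$ and then invoking the $k=1$ case (the hyperbolic analogue of Proposition \ref{prop-monotonic1}), which is exactly your single pointwise estimate $(t^{\alpha/\alpha'}-1)^{2}\le(t-1)^{2}$ with $t=(1-|z|^2)^{\alpha'}|g|^{\beta'}$, composed into one step. The caveat you flag -- that the comparison needs $k\beta\le\beta'$, i.e.\ $\alpha/\alpha'\le1$ -- is equally implicit in the paper's chain, so your proof matches the paper's in both substance and scope.
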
 

\begin{proof}
For $k=1$, the proof essentially amounts to a repetition of the argument used 
in Proposition \ref{prop-monotonic1}. As for $k=2,3,4,\ldots$, we just need
to observe that for a holomorphic function $f$, its power $f^k$ is holomorphic 
as well, which gives the conclusion that $\rho_{\alpha,\beta}(\mathbb{H})\le
\rho_{\alpha,k\beta}(\mathbb{H})\le\rho_{\alpha',\beta'}(\mathbb{H})$, where the last
inequality follows from the $k=1$ case. 
\end{proof}

\begin{rem}
It follows from Proposition \ref{prop-monotonic2} that
the function $\beta\mapsto \rho_{\alpha\beta,\beta}(\mathbb{H})$ is 
monotonically increasing,
for fixed positive $\alpha$. It is then natural to ask for the limits 
as $\beta\to0^+$
and as $\beta\to+\infty$. We believe that
\[
\lim_{\beta\to0^+}\rho_{\alpha\beta,\beta}(\mathbb{H})=0,\quad
\lim_{\beta\to+\infty}\rho_{\alpha\beta,\beta}(\mathbb{H})=1.
\]
\end{rem}

It is however less clear what happens to $\rho_{\alpha,\beta}(\mathbb{H})$ 
if we let $\alpha\to+\infty$ and keep $\beta$ fixed. 

\begin{conj}
We believe that
\[
\lim_{\alpha\to+\infty}\rho_{\alpha,\beta}(\mathbb{H})=\rho_\beta(\C).
\]
More intuitively, only local effects become important as we increase the 
field strength $\alpha$.
\label{conj-converg1}
\end{conj}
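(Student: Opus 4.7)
The plan is to establish the conjectured equality by proving the two inequalities $\liminf_{\alpha\to+\infty}\rho_{\alpha,\beta}(\mathbb{H})\ge\rho_\beta(\C)$ and $\limsup_{\alpha\to+\infty}\rho_{\alpha,\beta}(\mathbb{H})\le\rho_\beta(\C)$ separately. Both rest on a \emph{local scaling principle}: for fixed $R>0$ and a base point $\xi\in\D$, after conjugating by the Möbius involution $\gamma_\xi$ and rescaling $z=\zeta/\sqrt{\alpha}$, the weight satisfies $(1-|\zeta|^2/\alpha)^\alpha\to\e^{-|\zeta|^2}$ uniformly on $\D(0,R)$, while the measure element $(1-|z|^2)^{-1}\diff A(z)$ transforms into $\alpha^{-1}(1+\ordo(1))\diff A(\zeta)$ on the rescaled ball. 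Consequently, the hyperbolic $(\alpha,\beta)$-discrepancy averaged over a single cell of hyperbolic diameter $R/\sqrt{\alpha}$ becomes, in the limit $\alpha\to+\infty$, identical to the planar $\beta$-discrepancy at scale $R$.

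For the lower bound I would first derive the $(\alpha,\beta)$-analogue of Proposition \ref{prop-2.2}. Taking the transplant with $|h_\xi(\zeta)|=|\gamma_\xi'(\zeta)|^{(\alpha+1/2)/\beta}|h(\gamma_\xi(\zeta))|$, a change-of-variables of the type that led to \eqref{eq-2.4} gives the identity
\[
\int_{\gamma_\xi(\D(0,R/\sqrt{\alpha}))}\bigl((1-|\zeta|^2)^\alpha|h_\xi(\zeta)|^\beta-1\bigr)^2
\frac{\diff A(\zeta)}{1-|\zeta|^2}
=\int_{\D(0,R/\sqrt{\alpha})}\bigl((1-|z|^2)^\alpha|h(z)|^\beta-|\gamma_\xi'(z)|^{1/2}\bigr)^2
\frac{\diff A(z)}{1-|z|^2}.
\]
After the substitution $z=w/\sqrt{\alpha}$, the scaling principle converts the right-hand side into $\alpha^{-1}R^2(1-|\xi|^2)\rho_\beta^{(R)}(\C)(1+\ordo_\alpha(1))$, where $\rho_\beta^{(R)}(\C)$ denotes the normalized infimum planar discrepancy over $\D(0,R)$. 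Integrating this local lower bound over $\xi\in\D(0,r^4)$ against $\diff A(\xi)/(1-|\xi|^2)^2$ and then applying the Schur-type overlap bound used between \eqref{eq-2.4''.1} and \eqref{eq-2.4''.3} yields $\rho_\beta^{(R)}(\C)(1+\ordo_\alpha(1))\le\rho_{\alpha,\beta}(\mathbb{H})$; sending $\alpha\to+\infty$ and then $R\to+\infty$ finishes this direction, since $\rho_\beta^{(R)}(\C)\to\rho_\beta(\C)$.

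For the upper bound, take a near-extremal polynomial $f_0$ for $\rho_\beta^{(R)}(\C)$ on $\D(0,R)$ and cover $\D(0,r)$ by a Schläfli-type hyperbolic tiling (in the spirit of Subsection \ref{subsec-tiling}) by cells of hyperbolic diameter $\sim R/\sqrt{\alpha}$ with centers $\{\xi_k\}_k$. On each cell I would place the local model obtained by transplanting the rescaled function $f_0(\sqrt{\alpha}\,\cdot\,)$ through $\gamma_{\xi_k}$ with the cocycle weight $|\gamma'_{\xi_k}|^{(\alpha+1/2)/\beta}$. Gluing these local pieces via a smooth partition of unity produces a nonholomorphic approximant that can be corrected to a bona fide polynomial by a $\bar\partial$-solution with the polynomial growth control of Ameur--Hedenmalm--Makarov \cite{AHM}. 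The contribution of each cell to the normalized discrepancy integral is then $\rho_\beta^{(R)}(\C)(1+\ordo_\alpha(1))$ times the cell's hyperbolic measure, so summing and passing to the iterated limit $\alpha\to+\infty$, then $R\to+\infty$, delivers the upper bound.

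The main obstacle will be the upper-bound construction: the $\bar\partial$-correction must be sufficiently small in a weighted $L^\infty$-sense so as not to spoil the near-optimal zero configuration on each cell, and cross-cell interference between the Gaussian-like tails of adjacent transplants must remain under control. An additional subtlety is that the cocycle exponent $(\alpha+1/2)/\beta$ is in general not half of an integer, so the transplanted model has to be assembled in a single-valued manner, with extra factors of $(1-\bar\xi z)$ absorbed either into the local template or into the $\bar\partial$-correction. The lower-bound direction is comparatively routine once the local estimate is set up uniformly in $\alpha$, although extending the gradient-flow argument of Proposition \ref{prop-2.1} to the $(\alpha,\beta)$-setting and verifying that it survives the scaling limit still calls for some technical care.
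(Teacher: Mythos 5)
The statement you are addressing is Conjecture \ref{conj-converg1}: the paper offers no proof of it, only the heuristic (recorded right after the conjecture) that under the dilation $z\mapsto z/\sqrt{\alpha}$ the weight $(1-|z|^2/\alpha)^{\alpha}$ tends to $\e^{-|z|^2}$ --- which is exactly your ``local scaling principle''. What you have written is therefore a research programme rather than a proof, and it does not close the conjecture. On the positive side, your transplant exponent $(\alpha+\tfrac12)/\beta$ is the correct generalization of the exponent $3/2$ in the computation \eqref{eq-2.4}, and the averaging/overlap argument of Section \ref{sec-appendix} does become asymptotically lossless as the cells shrink, so the lower-bound skeleton is plausible. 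But even there a key step is asserted rather than proved: you need a sharp local estimate, uniform in $\alpha$ and in the base point $\xi$, saying that the infimum over \emph{all} functions holomorphic on the shrinking cell of the $(\alpha,\beta)$-discrepancy is $\alpha^{-1}R^2\rho^{(R)}_\beta(\C)\,(1+\ordo(1))$. That requires justifying the interchange of the infimum with the limit of the weights (normal families plus some uniform control near the boundary of the cell), and replacing the comparison function $|\gamma_\xi'|^{1/2}$ by a constant \emph{without} the lossy factor $\tfrac49$ that Proposition \ref{prop-2.2} uses; the paper's local machinery (Propositions \ref{prop-2.1} and \ref{prop-2.2}) only produces crude absolute constants and cannot deliver the sharp constant you need. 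Also note that $\rho^{(R)}_\beta(\C)$ is only known to have a liminf as $R\to+\infty$, so the bookkeeping of which inequality uses which radii $R$ must be done carefully.

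The essential gap, which you yourself flag, is the upper bound. Gluing transplanted planar near-minimizers over a tiling by a partition of unity and correcting with a $\bar\partial$-solution in the style of \cite{AHM} is precisely the kind of construction the paper only \emph{proposes} (see the discussion following Conjecture \ref{conj-1.4.1}) and never carries out. The difficulty is not cosmetic: on the overlap regions the glued function is comparable in size to the weight and the cutoff derivatives are of order $\sqrt{\alpha}$ at the hyperbolic scale, so the $\bar\partial$-datum is not small; an $L^2$ H\"ormander-type estimate does not by itself give a correction that is pointwise negligible relative to the local models, and any correction of non-negligible size destroys the sharp constant $\rho^{(R)}_\beta(\C)$ and perturbs the zero configuration. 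The phase mismatch between adjacent cells (equivalently the character/cocycle obstruction, compare Section \ref{sec-zeropack:logmono} and Proposition \ref{prop-periodic1.2}) is likewise a genuine obstruction, not a technicality to be ``absorbed''. Until these points are established, your argument yields at best the inequality $\liminf_{\alpha\to+\infty}\rho_{\alpha,\beta}(\mathbb{H})\ge\rho_\beta(\C)$ (modulo the uniformity issues above), and the conjectured equality remains open.
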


Note that if we dilate the disk appropriately, the weight 
$(1-|z|^2)^\alpha$ becomes 
\[
\bigg(1-\frac{|z|^2}{\alpha}\bigg)^\alpha,
\]
which has the limit $\e^{-|z|^2}$ as $\alpha\to+\infty$. This shows the 
connection with the planar density. 

\begin{rem}
There is a variant of \eqref{eq-vareq1} which applies for more general 
$\alpha,\beta$. A minimizer $f_0$ for fixed $r$ meets 
\[
\Pop_{\alpha-1,r}\bigg[\big((1-|z|^2)^{\alpha}|f_0|^\beta-1\big)
\frac{|f_0|^{\beta}}{\bar f_0}\bigg]=0,
\]  
where $\Pop_{\alpha-1,r}$ is the weighted Bergman projection corresponding to the 
disk $\D(0,r)$ and the weight $(1-|z|^2)^{\alpha-1}$. As we noticed previously, 
the case when $0<\beta<1$ must be treated with additional care, as 
$|f_0|^\beta/\bar f_0$ may have nonintegrable singularities at zeros of $f_0$
of high multiplicity. Naturally, the instance $\alpha=\beta=1$ gives us 
back \eqref{eq-vareq1}. If $\beta=2$, the above equation says that
\[
f_0=\Pop_{\alpha-1,r}\big[(1-|z|^2)^{\alpha}f_0|f_0|^2\big],
\] 
and we are enticed to let $r\to1^-$, and consider the equation
\[
f_0=\Pop_{\alpha-1}\big[(1-|z|^2)^{\alpha}f_0|f_0|^2\big],
\]
where $\Pop_{\alpha-1}$ is the weighted Bergman projection on the unit 
disk $\D$ with the weight $(1-|z|^2)^{\alpha-1}$:
\[
\Pop_{\alpha-1} h(z):=\alpha\int_{\D}
\frac{(1-|w|^2)^{\alpha-1}}{(1-z\bar w)^{\alpha+1}}h(w)\diff A(w),\qquad z\in\D.
\] 
As in the preceding 
subsection, there is a corresponding time evolution equation
\[
\imag\partial_t u=\Pop_{\alpha-1}\big[(1-|z|^2)^{\alpha}u|u|^2\big],
\] 
which we understand as a hyperbolic geometry analogue of the LLL-equation 
\eqref{eq-cSzeqBF1}. 
\end{rem}

\section{Geometric zero packing for compact Riemann 
surfaces using logarithmic monopoles}
\label{sec-zeropack:logmono}

Our experience with geometric zero packing from Section \ref{sec-imprCS}
suggests a strong relation with regular configurations of lattice type, 
which suggests that the problem should be introduced on the quotient surface
level, which should then be a compact Riemann surface. Moreover, the notion 
of a \emph{logarithmic monopole} becomes very natural. It is the natural 
analogue of the Green function for the Laplacian in the context of compact 
surfaces.

\subsection{Logarithmic monopoles for compact Riemann 
surfaces}
\label{subsec-monopole3}

We consider a compact Riemann surface $\mathcal{S}$ with genus $g$, where 
$g\ge0$ is an integer. Then, by the uniformization theorem, $\mathcal{S}$ is
has one of the following forms: (i) if $g=0$, then $\mathcal{S}$ is 
topologically a sphere, which can be modelled by 
$\mathcal{S}=\mathbb{S}/\Gamma$ for a finite subgroup $\Gamma$ of the
automorphism group of the Riemann sphere $\mathbb{S}$, (ii) if $g=1$, then
$\mathcal{S}$ is a torus modelled by $\mathcal{S}=\C/\Lambda$ for a non-trival
lattice $\Lambda$, and (iii) if $g\ge2$, then $\mathcal{S}$ is modelled by
$\mathcal{S}=\mathbb{H}/\Gamma$, where $\mathbb{H}$ is the hyperbolic plane
and $\Gamma$ is a discrete subgroup of the automorphism group of $\mathbb{H}$.
In each of the cases (i)--(iii), we have a complete Riemannian metric with
constant curvature on the respective covering surfaces 
$\mathbb{S},\C,\mathbb{H}$, which then induces a canonical Riemannian metric 
on the surface $\mathcal{S}$. In a similar fashion, the canonical normalized
area measures $\diff A_{\mathbb{S}},\diff A,\diff A_{\mathbb{H}}$ induce a 
normalized area measure on $\mathcal{S}$, which we denote by 
$\diff A_{\mathcal{S}}$. The $\diff A_{\mathcal{S}}$-area of the whole surface
$\mathcal{S}$ is denoted by $a(\mathcal{S})$. 
The \emph{logarithmic monopole} $U(z,w)=U_{\mathcal{S}}(z,w)$, for points
$z,w\in\mathcal{S}$, is a real-valued function which for fixed $w$ has
\[
\hDelta^{\mathcal{S}}U(\cdot,w)=\frac12\delta_w-
\frac{1}{2a({\mathcal{S}})},
\]
where $\hDelta^{\mathcal{S}}$ is the normalized Laplace-Beltrami 
operator. The expression $\delta_w$ stands for the unit point mass at $w$,
treated as a $2$-form. The existence of this function is guaranteed by 
Corollary 8-2 of \cite{Spr}, which guarantees the existence of the 
corresponding logarithmic bipole $L(z,w,w')$ (see, e.g., \cite{Spr}, p. 213), 
which has a source at $w$ and a sink at $w'$. To obtain the monopole $U(z,w)$,
we just average this bipole function $L(z,w,w')$ with respect surface 
area in the $w'$ variable.
It is unique up to an additive real constant. 

For a nontrivial lattice $\Lambda$ with two generators, the torus 
$\C/\Lambda$ is of course a compact Riemann surface with genus $1$. In this
case, the logarithmic monopole may be expressed explicitly in terms of the 
classical Weierstrass sigma function (see Subsection 
\ref{subsec-monopole1} below). We will also consider the spherical 
genus $0$ case, as well as the (hyperbolic) genus $\ge2$ case.

\subsection{Geometric zero packing on compact 
Riemann surfaces}
We turn to the geometric zero packing problem for general compact surfaces.
We introduce the notation
\[
\langle f\rangle_{\mathcal{S}}:=\frac{1}{a(\mathcal{S})}\int_{\mathcal{S}}
f\diff A_{\mathcal{S}}
\]
for the surface average of a summable function $f:\mathcal{S}\to\C$. 
Let $U(z,w)=U_{\mathcal{S}}(z,w)$ denote the logarithmic monopole on the 
compact surface $\mathcal{S}$, normalized so that 
$\langle U(\cdot,w)\rangle_{\mathcal{S}}=0$.

\begin{defn}
For $n$ points $w_1,\ldots,w_n\in\mathcal{S}$, we write 
\[
U^{\langle n\rangle}(z):=U(z,w_1)+\cdots + U(z,w_n).
\]
For a positive real $\beta$, the \emph{minimal average discrepancy for 
geometric $\beta$-zero packing} on $\mathcal{S}$ is the sequence of numbers
\begin{equation}
\rho_{n,\beta}(\mathcal{S}):=\inf_{b,w_1,\ldots,w_n}
\big\langle
\big(b\,\e^{\beta U^{\langle n\rangle}}-1\big)^2\big\rangle_{\mathcal{S}},
\label{eq-rhosurf}
\end{equation}
where the infimum is over all positive reals $b$ and all points
$w_1,\ldots,w_n\in\mathcal{S}$. A collection of points 
$\{w_1,\ldots,w_n\}$ which realizes the infimum for some value of $b$ is 
called an \emph{equilibrium configuration} (for exponent $\beta$).
\end{defn}

We observe that by standard Hilbert space methods,
\[
\rho_{n,\beta}(\mathcal{S})^{1/2}=\inf_{b,w_1,\ldots,w_n}\sup_{g}
\big\langle
(b\,\e^{\beta U^{\langle n\rangle}}-1) g\,\big\rangle_{\mathcal{S}}
\]
where the supremum runs over all real-valued functions $g$ in the unit 
ball of $L^2(\mathcal{S})$. This is somewhat analogous to the  
Kantorovich-Wasserstein distance used in optimal transport \cite{Vill}.
The quantity $\rho_{n,\beta}(\mathcal{S})$ measures how evenly we can place the
$n$ points $w_1,\ldots,w_n$ on the surface so as to minimize the average 
discrepancy. 

As for monotonicity issues, the approach used in Propositions 
\ref{prop-monotonic1}
and \ref{prop-monotonic2} also shows the following. We suppress the 
analogous proof. 

\begin{prop}
Fix a compact Riemann surface $\mathcal{S}$ and an integer $n=1,2,3,\ldots$. 
Then the function $\beta\mapsto\rho_{n,\beta}(\mathcal{S})$ is monotonically 
increasing:
\[
\rho_{n,\beta}(\mathcal{S})\le\rho_{n,\beta'}(\mathcal{S}),\qquad 
0<\beta<\beta'<+\infty.
\]
\end{prop}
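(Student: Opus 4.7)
The plan is to follow verbatim the strategy used for Propositions \ref{prop-monotonic1} and \ref{prop-monotonic2}: derive the monotonicity from the elementary pointwise inequality
\[
(1-t^{\beta})^2 \le (1-t^{\beta'})^2, \qquad t>0,\ 0<\beta<\beta'<+\infty,
\]
which was established already in the proof of Proposition \ref{prop-monotonic1}. Compared with the planar and hyperbolic settings, the novelty here is that the zeros form a discrete parameter $(z_1,\ldots,z_n)$ rather than being packaged inside the choice of a polynomial $f$, and there is an explicit positive amplitude $b$ in the definition \eqref{eq-rhosurf}. This amplitude $b$ will play the role that the scaling invariance in the Gaussian parameter $\alpha$ played in Proposition \ref{prop-monotonic1}.

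To execute the plan, I would fix an arbitrary configuration $(z_1,\ldots,z_n)\in\mathcal{S}^n$ and amplitude $b>0$, and set
\[
V(z):=U(z,z_1)+\cdots+U(z,z_n), \qquad t(z):=b^{1/\beta}\,\e^{V(z)},
\]
so that $b\,\e^{\beta V(z)}=t(z)^{\beta}$. Writing $b':=b^{\beta'/\beta}$, one also has $b'\,\e^{\beta' V(z)}=t(z)^{\beta'}$. Applying the pointwise inequality above with $t=t(z)$ then yields
\[
\bigl(b\,\e^{\beta V(z)}-1\bigr)^2 \le \bigl(b'\,\e^{\beta' V(z)}-1\bigr)^2, \qquad z\in\mathcal{S}.
\]
Integrating both sides against $a(\mathcal{S})^{-1}\diff A_{\mathcal{S}}$ and observing that $b\mapsto b^{\beta'/\beta}$ is a bijection of $(0,+\infty)$ onto itself, I would then pass to the infimum over $(b,z_1,\ldots,z_n)$ simultaneously on both sides, yielding exactly $\rho_{n,\beta}(\mathcal{S})\le\rho_{n,\beta'}(\mathcal{S})$.

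I do not anticipate a serious obstacle. The only matter to check is that the integrals in \eqref{eq-rhosurf} are well-defined so that the pointwise manipulation is rigorous. This reduces to observing that $V(z)$ differs in any coordinate neighborhood of $z_j$ from $\sum_{k}\log|z-z_k|$ by a smooth function, so that $\e^{\beta V}$ has only integrable zeros of type $\prod_k|z-z_k|^{\beta}$ at the source points and is smooth and bounded elsewhere on the compact surface $\mathcal{S}$. Consequently the entire argument goes through without any subtlety, which is consistent with the author's decision to suppress the proof.
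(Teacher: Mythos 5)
Your argument is correct and is exactly the ``analogous proof'' the paper suppresses: it reduces the claim to the pointwise inequality $(1-t^{\beta})^2\le(1-t^{\beta'})^2$ from the proof of Proposition \ref{prop-monotonic1}, applied with $t=b^{1/\beta}\e^{U(\cdot,z_1)+\cdots+U(\cdot,z_n)}$, with the free amplitude $b$ (reparametrized bijectively via $b\mapsto b^{\beta'/\beta}$) playing the role of the scaling invariance used in the planar case. Your integrability remark is also sound, since $\e^{\beta U(\cdot,z_j)}$ vanishes continuously at $z_j$ and the integrand is bounded on the compact surface.
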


Regarding the possible convergence as $n\to+\infty$ for a fixed exponent 
$\beta$, we suggest the following conjecture.

\begin{conj}
We believe that 
\[
\lim_{n\to+\infty}
\rho_{n,\beta}(\mathcal{S})\to\rho_{\beta}(\C)
\]
for any fixed compact surface $\mathcal{S}$ and any fixed positive real 
$\beta$. 
\label{conj-converg2}
\end{conj}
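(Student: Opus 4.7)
Let $\alpha_n := \beta n/(2a(\mathcal{S}))$ denote the effective planar field strength generated by $n$ monopoles of exponent $\beta$. The strategy rests on the following \emph{monopole-polynomial identity}, valid on any subdomain $D$ of a conformal coordinate chart with base point $z_0\in D$: there exists a holomorphic function $f_D$ on $D$, vanishing exactly at those $z_j$ lying in $D$ (with multiplicities), such that
\begin{equation*}
\exp\bigl(\beta\textstyle\sum_{j=1}^n U(z,z_j)\bigr) = |f_D(z)|^\beta\,\e^{-\alpha_n|z-z_0|^2}, \qquad z\in D.
\end{equation*}
This follows from the defining relation $(\hDelta U(\cdot,w))\diff A_{\mathcal{S}} = \tfrac12\delta_w - \tfrac{1}{2a(\mathcal{S})}\diff A_{\mathcal{S}}$ by computing Laplacians: the singular part of $\sum_j U(\cdot,z_j)$ in $D$ matches $\log\prod_{z_j\in D}|z-z_j|$, the aggregated smooth sink of density $-n/(2a(\mathcal{S}))$ matches $-(\alpha_n/\beta)|z-z_0|^2$ (since $\hDelta|z-z_0|^2=1$), and any remaining harmonic correction is absorbed into $f_D$ by writing it as the real part of a holomorphic function and exponentiating. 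The scaling invariance observed in the proof of Proposition~\ref{prop-monotonic1} shows that $\rho_\beta(\C)$ is unchanged when $\e^{-|z|^2}$ is replaced by $\e^{-\alpha_n|z|^2}$, so the local integrand $(b\exp(\beta\sum_j U(\cdot,z_j))-1)^2$ on $D$ is literally a planar discrepancy integrand at field strength $\alpha_n$.

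\textbf{Upper bound.} Fix $\epsilon > 0$. Choose $R$ large and a near-optimal polynomial $f^\star$ so that the finite-$R$ planar infimum (at field strength $1$, exponent $\beta$) lies within $\epsilon$ of $\rho_\beta(\C)$. Tile a cover of $\mathcal{S}$ by conformal coordinate squares of chart-side $\alpha_n^{-1/2}R$, and in each tile populate zeros by translating $f^\star$ to $z_0$ and rescaling by $\alpha_n^{-1/2}$. The total number of zeros is asymptotic to $n$ (boundary tiles contribute $O(\sqrt{n})$ corrections), and the identity together with the scaling invariance makes the tilewise integrand equal (after change of variables $w=\alpha_n^{1/2}(z-z_0)$) to the planar $R$-disk integrand up to $o(1)$ distortion from the conformal factor between $\diff A$ on the chart and $\diff A_{\mathcal{S}}$. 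Summing over tiles and optimizing the amplitude gives $\limsup_n\rho_{n,\beta}(\mathcal{S})\le\rho_\beta(\C)+\epsilon+o(1)$, and then $\epsilon\to 0$.

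\textbf{Lower bound.} For any admissible $(b;z_1,\ldots,z_n)$ with the same tiling scale, the identity recasts each tile integral as the planar discrepancy integral for the pair $(b,f_D)$, and after the change of variables $w=\alpha_n^{1/2}(z-z_0)$ becomes the planar integral over a square containing $\D(0,R)$. Since the finite-$R$ planar infimum for exponent $\beta$ exceeds $\rho_\beta(\C)-\epsilon$ for $R$ large, and since the bound applies to all holomorphic $f_D$ (not merely polynomials) by density, summing over the tiles and dividing by $a(\mathcal{S})$ yields $\rho_{n,\beta}(\mathcal{S})\ge\rho_\beta(\C)-\epsilon-o(1)$, whence $\liminf_n\rho_{n,\beta}(\mathcal{S})\ge\rho_\beta(\C)$. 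Combined with the upper bound, this proves existence of the limit and equality with $\rho_\beta(\C)$.

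\textbf{Main obstacle.} The delicate point is the joint scaling: the tiles must be small enough in $\mathcal{S}$ that the ratio between the chart area and $\diff A_{\mathcal{S}}$ is constant up to $o(1)$ on each tile, yet after the $\alpha_n^{1/2}$-rescaling they must contain a Euclidean disk of arbitrarily large fixed radius $R$. Since $\alpha_n\asymp n$, the tile diameter in the chart is $\asymp n^{-1/2}R$, which tends to $0$ in $\mathcal{S}$ for any fixed $R$; the two constraints are therefore compatible, but the explicit $o(1)$ dependence on the chart geometry, together with boundary-tile contributions and the matching of amplitudes $b$ across tiles, must be tracked quantitatively. For higher genus surfaces where a single chart does not cover $\mathcal{S}$, a partition of unity subordinate to finitely many charts handles the global-to-local reduction at the cost of additional bookkeeping. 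This is a Coulomb-gas style screening argument and the bookkeeping, though routine in spirit, is the principal technical content of the proof.
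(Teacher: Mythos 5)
This statement is posed as Conjecture \ref{conj-converg2}; the paper offers no proof of it, and in Remark \ref{rem-final} only indicates a one-sided comparison ($\rho_{\alpha',\beta}(\mathbb{H})\le\rho_{n,\beta}(\mathcal{S})$, via refined ergodicity of the geodesic flow), explicitly leaving the approximation question open. So your argument must stand entirely on its own, and as written it does not. First, the ``monopole--polynomial identity'' on which everything rests is not exact: in a conformal chart the smooth part of $\sum_j U(\cdot,z_j)$ has flat Laplacian $-\tfrac{n}{2a(\mathcal{S})}\lambda(z)$, where $\lambda$ is the conformal density of $\diff A_{\mathcal{S}}$ in the chart, and this does not match the constant $\hDelta_z|z-z_0|^2=1$ unless $\lambda\equiv1$; the leftover is not harmonic, so it cannot simply be ``absorbed into $f_D$''. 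On tiles of chart-size $\asymp Rn^{-1/2}$ this defect produces an error of order $R^3n^{-1/2}$ in the exponent and is repairable, but you state the identity as exact and the repair is nowhere quantified.

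Two further steps fail as written. (i) Lower bound: after rescaling, a tile is a square $Q$ of side at least $2R$ containing $\D(0,R)$, and bounding the tile average from below by the disk infimum only captures the fraction $|\D(0,R)|_A/|Q|_A\le\pi/4$ of $\rho_\beta(\C)-\epsilon$; summed over tiles this yields $\liminf_n\rho_{n,\beta}(\mathcal{S})\ge c\,\rho_\beta(\C)$ with $c<1$, not the claimed inequality. You would need the (unproved) fact that the square-based planar density coincides asymptotically with the disk-based one, e.g.\ by a translation-averaging argument using the Fock translates $f_{\langle a\rangle}$ from Subsection \ref{subsec-packingproblem1}. (ii) Upper bound: this is the serious gap. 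Prescribing the zeros inside each tile according to a near-minimizer $f^\star$ does not determine $\e^{\beta\sum_jU(\cdot,z_j)}$ on that tile: the identity only furnishes \emph{some} $f_D$ with those zeros, and its zero-free factor records the far field of the other $\sim n$ points. Nothing in your construction shows that this far field is nearly constant on each tile; that screening estimate is precisely the difficulty (for a near-optimal $f^\star$ whose zero configuration is unknown it cannot be dismissed as bookkeeping). Relatedly, the configuration must consist of exactly $n$ points, and correcting the count by $O(\sqrt n)$ points multiplies the global profile by factors $\e^{\beta U(\cdot,w)}$, each a fixed non-constant function of size $O(1)$, which is not an $o(1)$ perturbation of the discrepancy. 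In short, the proposal is a sensible outline of a Coulomb-gas screening attack on what is an open conjecture, but the key estimates are missing and the bounds claimed do not follow from the steps given.
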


To arrive at an equilibrium configuration $\{w_1,\ldots,w_n\}$, we might try
a numerical approach based on the gradient flow method.
For a positive real $\gamma>0$, we may think of 
\[
Z_\gamma(w_1,\ldots,w_n):=\langle \e^{\gamma U^{\langle n\rangle}}\rangle_{\mathcal{S}}
\]
as a marginal partition function for the $n$-point $\beta$-ensemble on 
the surface $\mathcal{S}$ (see \cite{KMMW}, also \cite{krish} for the 
sphere with $\gamma=2$), and associate to it the probability 
measure on the surface $\mathcal{S}$
\begin{equation}
\frac{1}{Z_\gamma(w_1,\ldots,w_n)}\e^{\gamma U^{\langle n\rangle}}
\frac{\diff A_{\mathcal{S}}}{a(\mathcal{S})}.
\label{eq-probmeas101}
\end{equation}
%Expectation with respect to this surface probability measure will be written
%$\expect^\gamma_{z_1,\ldots,z_n}$ (here, the points $z_1,\ldots,z_n$ are 
%thought of as fixed). 
We will return to this probability measure a little later, and meanwhile
observe that the optimal value of $b$ in the definition of 
$\rho_{n,\beta}(\mathcal{S})$ is
\begin{equation}
b=b(\beta)=\frac{\langle\e^{\beta U^{\langle n\rangle}}\rangle_{\mathcal{S}}}
{\langle\e^{2\beta U^{\langle n\rangle}}\rangle_{\mathcal{S}}}.
\label{eq-bnumber}
\end{equation}
Moreover, since 
\[
\inf_{b}
\big\langle
\big(b\,\e^{\beta U^{\langle n\rangle}}-1\big)^2
\big\rangle_{\mathcal{S}}
=\Bigg\langle\Bigg(
\frac{\langle\e^{\beta U^{\langle n\rangle}}\rangle_{\mathcal{S}}}
{\langle\e^{2\beta U^{\langle n\rangle}}\rangle_{\mathcal{S}}}
\,
\e^{\beta U^{\langle n\rangle}}-1\Bigg)^2\Bigg\rangle_{\mathcal{S}}
=1-\frac{\langle\e^{\beta U^{\langle n\rangle}}\rangle_{\mathcal{S}}^2}
{\langle\e^{2\beta U^{\langle n\rangle}}\rangle_{\mathcal{S}}},
\]
it is immediate that
\[
\rho_{n,\beta}(\mathcal{S})=1-\sup_{w_1,\ldots,w_n}
\frac{\langle\e^{\beta U^{\langle n\rangle}}\rangle_{\mathcal{S}}^2}
{\langle\e^{2\beta U^{\langle n\rangle}}\rangle_{\mathcal{S}}}.
\]
The gradient flow method suggests that to climb the hill to the top we should 
always move in the direction of steepest ascent. In this situation, it is 
possible to calculate that direction at a given $n$-tuple $(w_1,\ldots,w_n)$.
In particular, at the top where we stop the gradient vanishes. 
When we write this out, we obtain the following criterion.
%Now, application of the gradient flow method tells us that the $n$-tuple 
%of points $(z_1,\ldots,z_n)$ should move in the direction of biggest 
%increase, given by the vector
%\[
%\big(\expect^\beta_{z_1,\ldots,z_n} V(\cdot,z_1)-
%\expect^{2\beta}_{z_1,\ldots,z_n}
%V(\cdot,z_1),\ldots,
%\expect^\beta_{z_1,\ldots,z_n} V(\cdot,z_n)-\expect^{2\beta}_{z_1,\ldots,z_n}
%V(\cdot,z_n)\big).
%\] 
%Here, we write $V(z,w):=\bar\partial_w U(z,w)$, where we made a smooth choice
%of the free constant involved with $U(z,w)$, and we differentiate with
%respect to the global complex coordinate associated with the 
%universal covering surface (which is either the Riemann sphere, the complex
%plane, or the hyperbolic plane).   
%In particular, for an optimal configuration $z_1,\ldots,z_n$, the above
%vector coincides with the zero vector:

\begin{prop}
\label{prop-8.2.4}
If $\{w_1,\ldots,w_n\}$ is an equilibrium configuration for exponent $\beta$
on the compact Riemann surface $\mathcal{S}$, then
we have
\[
\frac{\big\langle \e^{\beta U^{\langle n\rangle}}\partial_w U
(\cdot,w)\big\rangle_{\mathcal{S}}}
{\langle \e^{\beta U^{\langle n\rangle}}\rangle_{\mathcal{S}}}
=\frac{\big\langle \e^{2\beta U^{\langle n\rangle}}\partial_w U
(\cdot,w)\big\rangle_{\mathcal{S}}}
{\langle \e^{2\beta U^{\langle n\rangle}}\rangle_{\mathcal{S}}},
\qquad w\in\{w_1,\ldots,w_n\}.
\]
\end{prop}

The expressions on the left-hand and right-hand sides of the displayed
equation of the above proposition \ref{prop-8.2.4} are the expectated values of
the function $\partial_w U(\cdot,w)$ with respect to the probability measure 
\eqref{eq-probmeas101} for $\gamma=\beta$ and $\gamma=2\beta$, respectively.
Note that the necessary condition for an equilibrium configuration stated 
in Proposition \ref{prop-8.2.4} differs from the corresponding condition 
for Fekete configurations \cite{ST} as well as from that of spherical designs, 
e.g. \cite{DGS}.
Note that the space of $n$-tuples $(w_1,\ldots,w_n)$ has complex dimension $n$,
while the proposition supplies $n$ complex nonlinear conditions 
(and hence $2n$ real conditions) which with some luck might have only a 
finite set of solutions.

\begin{rem}
(a)
In principle, the definition of geometric $\beta$-zero packing 
\eqref{eq-rhosurf} makes sense for negative exponents $\beta$ as well, 
provided that the exponent is not too large
negative: $-1<\beta<0$ is needed. It actually makes sense for $\beta=0$ as
well, if we minimize instead the quantity
\begin{equation}
\rho_{n,\star}(\mathcal{S}):=\inf_{w_1,\ldots,w_n}
\langle(U^{\langle n\rangle })^2\rangle_{\mathcal{S}},
\label{eq-rhosurf0.0}
\end{equation}
which formally arises as the limit of $\beta^{-2}\rho_{n,\beta}(\mathcal{S})$
as $\beta\to0$. In the context of \eqref{eq-rhosurf0.0} it is essential that 
we normalize the additive constant which we are allowed to add to $U(z,w)$ 
in such a way that 
\begin{equation}
\langle U(\cdot,w)\rangle_{\mathcal{S}}=0,\qquad w\in\mathcal{S}.
\label{eq-rhosurf0.1}
\end{equation}

\noindent (b) Let us compare the minimization problem \eqref{eq-rhosurf0.0}
with the Fekete problem which in the given setting asks for the 
configurations $w_1,\ldots,w_n$ that achieve the maximum 
\[
\sup_{w_1,\ldots,w_n}\sum_{j,k:j\ne k}U(w_j,w_k).
\]
It will be convenient to cut off the singularity in the logarithmic monopole
and replace it by a function $U_\epsilon(z,w)$ which is $C^2$-smooth in both 
variables and $\lim_{\epsilon\to0^+}U_\epsilon(z,w)=U(z,w)$.    
If the approximation is done well, with controlled errors, we could arrange it
so that for fixed $z\ne w$, we would have
\begin{multline*}
U(z,w)=2\int_{\mathcal{S}}U(\cdot,w)
\hDelta^{\mathcal{S}} U_\epsilon(\cdot,z)\diff A_{\mathcal{S}}+\mathrm{O}(\epsilon)
=2\int_{\mathcal{S}}U_\epsilon(\cdot,w)
\hDelta^{\mathcal{S}} U_\epsilon(\cdot,z)
\diff A_{\mathcal{S}}+\mathrm{O}(\epsilon)
\\
=-2\int_{\mathcal{S}}\partial^{\mathcal{S}} U_\epsilon(\cdot,z)
\bar\partial^{\mathcal{S}} U_\epsilon(\cdot,w)\diff A_{\mathcal{S}}
+\mathrm{O}(\epsilon).
\end{multline*}
Here we used the property \eqref{eq-rhosurf0.1}, and in a second step, 
Green's formula on the surface. In terms of notation, the differential 
operators $\partial^{\mathcal{S}}$ and $\bar\partial^{\mathcal{S}}$ are modified
to fit the surface (the conformal factor is inserted on the left-hand side).
It now follows that 
\begin{equation*}
\sum_{j,k:j\ne k}U(w_j,w_k)=-2\int_{\mathcal{S}}
\Big|\sum_j\partial^{\mathcal{S}} U_\epsilon(\cdot,w_j)\Big|^2
\diff A_{\mathcal{S}}+2\sum_j\int_{\mathcal{S}}
|\partial^{\mathcal{S}} U_\epsilon(\cdot,w_j)|^2\diff A_{\mathcal{S}}
+\mathrm{O}(\epsilon),
\end{equation*}
where the expression 
\[
\int_{\mathcal{S}}
|\partial^{\mathcal{S}} U_\epsilon(\cdot,w_j)|^2\diff A_{\mathcal{S}}
=\frac14\int_{\mathcal{S}}
|\nabla^{\mathcal{S}} U_\epsilon(\cdot,w_j)|^2\diff A_{\mathcal{S}}
\]
tends to $+\infty$ with rather precise asymptotics as $\epsilon\to0^+$ (at
least if $U_\epsilon(z,w)$ is chosen correctly). 
If we add a term to neutralize each such contribution 
(for each point $w_j$), the Fekete problem essentially asks for 
configurations that minimize the Dirichlet energy 
\[
\int_{\mathcal{S}}
|\nabla^{\mathcal{S}} U_\epsilon^{\langle n \rangle}|^2\diff A_{\mathcal{S}},
\quad\text{where}\quad 
U_\epsilon^{\langle n \rangle}:=U_\epsilon(\cdot,w_1)+\cdots+U_\epsilon(\cdot,w_n),
\] 
as $\epsilon\to0^+$ (see, e.g. \cite{Serf} for a more general situation). 
In comparison, the problem \eqref{eq-rhosurf0.0} asks us 
to minimize the corresponding Bergman energy (just the area-$L^2$ norm 
squared). 
\label{rem-energies}
\end{rem}

\subsection{Spherical zero packing} 
We briefly mention what happens when the compact Riemann surface has
\emph{genus} $0$. We will consider the Riemann sphere 
$\mathbb{S}:=\C_\infty=\C\cup\{\infty\}$ with the standard metric 
$\diff s_{\mathbb{S}}:=(1+|z|^2)^{-1}|\diff z|$, which has constant positive 
Gaussian curvature. The associated spherical normalized area measure is 
$\diff A_{\mathbb S}(z):=(1+|z|^2)^{-2}\diff A(z)$. Moreover, the associated 
logarithmic monopole is the function 
\[
U(z,w):=\log|z-w|-\frac12\log(1+|z|^2)+A(w),\qquad z,w\in\C,\,\,z\ne w,
\]
where we are free to choose the real number $A(w)$. The choice 
$A(w):=-\frac12\log(1+|w|^2)$ would seem to be the most appropriate, since
it gives the symmetry property $U(z,w)=U(w,z)$, typical of Green 
functions. Then we may define $U(z,w)$ for $w=\infty$ as well: 
$U(z,\infty)=-\frac12\log(1+|z|^2)$. After all, the basic property of the 
logarithmic monopole is that 
\[
\hDelta^{\mathbb{S}} U(\cdot,w)
=\frac12\delta_w-\frac12
\]
in the sense of distribution theory. 
Here, it is of course important that the area of the sphere is normalized to be
$a(\mathbb{S})=1$.

Let us look at the minimal average discrepancy for spherical $\beta$-zero 
packing, that is, the numbers $\rho_{n,\beta}(\mathbb{S})$, for $n=1$ and 
$n=2$. Since 
\begin{equation}
\rho_{n,\beta}(\mathbb{S})
=\inf_{a,w_1,\ldots,w_n}\int_{\mathbb{S}}
\bigg(a\,\frac{|z-w_1|^\beta\cdots|z-w_n|^\beta}
{(1+|z|^2)^{n\beta/2}(1+|w_1|^2)^{\beta/2}\cdots
(1+|w_n|^2)^{\beta/2}}-1\bigg)^2\diff A_{\mathbb{S}}(z),
\label{eq-rhosphere}
\end{equation}
where the infimum is over all positive reals $a$ and all complex numbers
$w_1,\ldots,w_n$, we calculate that (with $w_1=0$)
\[
\rho_{1,\beta}(\mathbb{S})=\inf_a\int_{\mathbb{S}}
\bigg(\frac{a|z|^\beta}{(1+|z|^2)^{\beta/2}}-1\bigg)^2
\diff A_{\mathbb{S}}(z)=\frac{\beta^2}{(2+\beta)^2}.
\]
As for $n=2$, it is intuitively clear that the two points should be 
antipodal in the optimal configuration, and we then pick $w_1=0,w_2=\infty$. 
As a consequence, 
\[
\rho_{2,\beta}(\mathbb{S})=\inf_a\int_{\mathbb{S}}
\bigg(\frac{a|z|^\beta}{(1+|z|^2)^{\beta}}
-1\bigg)^2\diff A_{\mathbb{S}}(z)=
1-\frac{2^{-4\beta}\pi^2\Gamma(2+2\beta)}
{(1+\beta)^2\Gamma(\frac{1+\beta}{2})^4},
\]
which we may compare with $\rho_{1,\beta}(\mathbb{S})$. Computer work 
strongly suggests that $\rho_{1,\beta}(\mathbb{S})\ge\rho_{2,\beta}(\mathbb{S})$ 
for all positive values of $\beta$ (this is from a calculation made by Wennman 
\cite{Wenn2}). For instance, with 
$\beta=1$, we find that $\rho_{1,1}(\mathbb{S})=0.111\ldots$, whereas
 $\rho_{2,1}(\mathbb{S})=0.07472\ldots$, which is much smaller and 
considerably closer to the conjectured value of $\rho_\C$ 
(which is $0.061203\ldots$, see Remark \ref{rem-monopole}). Here one 
might na\"\i{}vely guess that the function 
$n\mapsto \rho_{n,\beta}(\mathbb{S})$ is decreasing for fixed $\beta$. 
While this may be true for small $\beta$, it is certainly false for 
large $\beta$, as evidenced by further numerical work for $n=3$.
Compare also with Conjecture \ref{conj-converg2}. 

\subsection{Logarithmic monopoles for a torus}
\label{subsec-monopole1}

We turn to the case of a compact Riemann surface with \emph{genus $1$}.
Such a Riemann surface is a torus, and can be modelled by 
$\C/\Lambda_{\omega_1,\omega_2}$, in the notation of Subsection 
\ref{subsec-packingproblem1}.
Note that if we take logarithms in \eqref{eq-sigmafunction1}, 
we obtain the real-valued function 
\[
U(z):=\log(\e^{-|z|^2}|f(z)|)=-|z|^2+\log|f(z)|=-|z|^2+\log a+\re(\xi z+\eta z^2)
+\log|\sigma(z)|,
\]
and, we may define, more generally, $U(z,w):=U(z-w)$. Since the positive
constant $a$ is free, the function $U(z,w)$ is real-valued and well-defined 
up to an additive constant. Moreover, it is $\Lambda_{\omega_1,\omega_2}$-periodic
in both $z$ and $w$, with Laplacian 
\[
\hDelta U(\cdot,w)=-1+\frac12\sum_{\lambda\in\Lambda_{\omega_1,\omega_2}}
\delta_{\lambda+w}
\]  
in the sense of distribution theory, where $\delta_\xi$ is the unit point 
mass at the point $\xi\in\C$.

\subsection{Logarithmic monopoles for higher genus surfaces 
and character-modular forms}
\label{subsec-monopole2.1}
We turn to the case when the Riemann surface $\mathcal{S}$ has genus $g\ge2$.
We then equip the surface
with a metric of constant negative curvature, and use the hyperbolic plane
$\mathbb{H}$ as the universal covering surface. We model the hyperbolic plane
$\mathbb{H}$ by the unit disk $\D$ with the Poincar\'e metric.
This gives us the identification $\mathcal{S}\cong\D/\Gamma$, where 
$\Gamma$ is a Fuchsian group of M\"obius automorphisms. We write $D_\Gamma$
for a corresponding fundamental polygon bounded by hyperbolic geodesic
segments. We denote by $a(\Gamma)$ the $\diff A_{\mathbb{H}}$-area of $D_\Gamma$,
which is the same as the corresponding area of the surface $a(\mathcal{S})$.  
We first relate two properties of periodicity type.

\begin{prop}
Suppose $f:\D\to\C$ is holomorphic. Then, for real $\alpha$, the following 
are equivalent:
\smallskip

\noindent{\rm(a)} For all $\gamma\in\Gamma$, we have
$|\gamma'(z)|^\alpha |f\circ\gamma(z)|=|f(z)|$ on the disk $\D$.
\smallskip

\noindent{\rm(b)} The function $F(z):=(1-|z|^2)^\alpha|f(z)|$ is 
$\Gamma$-periodic, that is, $F\circ\gamma=F$ for all $\gamma\in\Gamma$.
\label{prop-periodic1.1}
\end{prop}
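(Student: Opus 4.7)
The plan is to reduce everything to the fundamental Schwarz--Pick type identity for Möbius automorphisms of the disk, namely
\[
1-|\gamma(z)|^2=|\gamma'(z)|\,(1-|z|^2),\qquad z\in\D,\,\,\gamma\in\mathrm{aut}(\D),
\]
which is valid for any $\gamma\in\Gamma$ since $\Gamma\subset\mathrm{aut}(\D)$. Raising this identity to the $\alpha$th power gives $(1-|\gamma(z)|^2)^\alpha=|\gamma'(z)|^\alpha(1-|z|^2)^\alpha$, and the right-hand side is positive throughout $\D$.

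Next I would simply compute $F\circ\gamma$ using this formula. By definition of $F$ applied at $\gamma(z)$,
\[
F\circ\gamma(z)=(1-|\gamma(z)|^2)^\alpha\,|f\circ\gamma(z)|
=|\gamma'(z)|^\alpha(1-|z|^2)^\alpha\,|f\circ\gamma(z)|.
\]
Comparing with $F(z)=(1-|z|^2)^\alpha|f(z)|$ and dividing through by the strictly positive factor $(1-|z|^2)^\alpha$, the condition $F\circ\gamma=F$ on $\D$ becomes exactly $|\gamma'(z)|^\alpha|f\circ\gamma(z)|=|f(z)|$ on $\D$. This yields the implication $\text{(a)}\Leftrightarrow\text{(b)}$ at once, with no hidden issues since the factor $(1-|z|^2)^\alpha$ never vanishes on the open disk.

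There is no serious obstacle here; the only thing to remark on is that the argument is valid pointwise on $\D$ for every single $\gamma\in\Gamma$, so the passage between the two formulations is completely reversible. Thus the proof reduces to invoking the conformal invariance of the hyperbolic area element encoded in the identity above, applied individually at each $\gamma$.
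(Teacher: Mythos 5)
Your proof is correct and follows the same route as the paper: both rest on the identity $1-|\gamma(z)|^2=(1-|z|^2)|\gamma'(z)|$ for $\gamma\in\mathrm{aut}(\D)$, from which the equivalence follows by a direct computation of $F\circ\gamma$ and division by the nonvanishing factor $(1-|z|^2)^\alpha$. You have simply written out explicitly what the paper leaves as ``an immediate consequence.''
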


\begin{proof}
This is an immediate consequence of the identity 
\begin{equation}
1-|\gamma(z)|^2=(1-|z|^2)|\gamma'(z)|,
\label{eq-automid1.01}
\end{equation}
which holds for any M\"obius automorphism $\gamma\in\mathrm{aut}(\D)$.
\end{proof}

We want to analyze the property (a) of Proposition \ref{prop-periodic1.1}
more carefully. First, for an automorphism $\gamma\in\mathrm{aut}(\D)$,
the derivative $\gamma'$ is nonzero, which permits us to define its
logarithm $\log\gamma'$ holomorphically in $\D$ (any two choices will differ 
by an integer multiple of $\imag 2\pi$). The group $\Gamma$ is finitely
generated, and we pick generators $\gamma_1,\ldots,\gamma_m$, and choose
the corresponding logarithms $\log\gamma'_j$ for $j=1,\ldots,m$ as holomorphic
functions any way we like (the freedom is up to constants in $\imag 2\pi\Z$). 
If the group $\Gamma$ were free, we would then proceed to represent an 
arbitrary element $\gamma\in\Gamma$ as a ``word'' (a finite composition of the 
generators), and 
let the logarithm $\log\gamma'$ be determined by the natural property that
\begin{equation}
\log{(\tilde\gamma \circ\gamma)'}=(\log\tilde\gamma')\circ\gamma+\log\gamma',
\qquad \gamma,\tilde\gamma\in\Gamma.
\label{eq-logs1}
\end{equation}
In a second step, the powers 
\begin{equation}
(\gamma'(z))^{\alpha}=\exp(\alpha\log\gamma'(z)),\qquad \gamma\in\Gamma,
\label{eq-powers1}
\end{equation}
would be defined as well, and we would have the property that for $\alpha\in\C$,
\begin{equation}
((\tilde\gamma \circ\gamma)')^\alpha=((\tilde\gamma')^\alpha\circ\gamma)
(\gamma')^\alpha,\qquad \gamma,\tilde\gamma\in\Gamma.
\label{eq-powers2}
\end{equation}
However, typically $\Gamma$ is not a free group, as there are finitely many 
so-called relations which need to be satisfied as well (a relation is the 
condition that a nontrivial combination of the generators equals the 
identity). These relations will put some restraints on our freedom of choosing
the logarithms $\log\gamma'_j$ for our generators $\gamma_j$, and it may happen
that this cannot be done in a manner consistent with \eqref{eq-logs1}. 
One way out is to express each group element 
$\gamma\in\Gamma$ uniquely as a combination of generators by picking the
shortest ``word'' expressing the element (if there is compretition, just
pick one of the minimal ``words''), and to use \eqref{eq-logs1} along the
``word'' to define properly the logarithm of the derivative. Then we sacrifice
the property \eqref{eq-logs1} globally, and hence \eqref{eq-powers2} need not
hold for an arbitrary $\alpha\in\C$. However, equality in \eqref{eq-powers2}
holds automatically for integers $\alpha\in\Z$.   

A $\Gamma$-\emph{character} is a function $\chi:\Gamma\to\Te$ with the 
multiplicative property
$\chi(\tilde\gamma\circ\gamma)=\chi(\tilde\gamma)\chi(\gamma)$ for all
$\gamma,\tilde\gamma\in\Gamma$. 
We shall require a slightly generalized version of this notion, which we
refer to as a $(\Gamma,q)$-root character, or simply a $q$-root character if
the group $\Gamma$ is taken for granted. 
For a positive integer $q$, we say that
$\chi:\Gamma\to\Te$ is a \emph{$(\Gamma,q)$-root character} if and only if 
$\chi^q$ is a $\Gamma$-character (here, $\chi^q(\gamma)=(\chi(\gamma))^q$ 
is the $q$-th power). This generalizes the concept of the characters, since
a character is automatically a $q$-root character for integers 
$q=2,3,4,\ldots$.

\begin{prop}
Suppose $f:\D\to\C$ is holomorphic. Then, for rational $\alpha=p/q$, where
$p,q$ are coprime integers and $q>0$, the following 
are equivalent:
\smallskip

\noindent{\rm(a)} For all $\gamma\in\Gamma$, we have
$|\gamma'(z)|^\alpha |f\circ\gamma(z)|=|f(z)|$ on the disk $\D$.
\smallskip

\noindent{\rm(b)} There exists a $(\Gamma,q)$-root character $\chi$ such that 
$(\gamma'(z))^\alpha f\circ\gamma(z)=\chi(\gamma)f(z)$.
\label{prop-periodic1.2}
\end{prop}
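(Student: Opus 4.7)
The plan is to show the two implications separately, with the direction (b)$\Rightarrow$(a) being almost trivial and (a)$\Rightarrow$(b) using the maximum modulus principle.

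For (b)$\Rightarrow$(a), I would simply take absolute values in the identity $(\gamma'(z))^\alpha f\circ\gamma(z)=\chi(\gamma)f(z)$. Since $|\chi(\gamma)|=1$ by definition of a $\Gamma$-character, the statement (a) follows immediately.

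For the converse (a)$\Rightarrow$(b), assume first that $f$ is not identically zero (the trivial case $f\equiv 0$ can take any character). For each $\gamma\in\Gamma$, I would consider the meromorphic function
\[
\chi_\gamma(z):=\frac{(\gamma'(z))^\alpha f\circ\gamma(z)}{f(z)}.
\]
The numerator is holomorphic (since $\gamma'$ is nowhere vanishing on $\D$, so the chosen holomorphic branch of $(\gamma')^\alpha$ is defined and nonzero on all of $\D$), and the key observation is that (a) forces $|\chi_\gamma(z)|=1$ wherever $\chi_\gamma$ is defined. This in turn implies that the zeros of the numerator coincide with those of $f$ with matching multiplicities, so the apparent singularities of $\chi_\gamma$ at zeros of $f$ are removable. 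Hence $\chi_\gamma$ extends to a holomorphic function on $\D$ of constant modulus $1$, and by the maximum principle (or open mapping theorem) it must be a unimodular constant, which I would call $\chi(\gamma)$.

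The final step is to verify that $\gamma\mapsto\chi(\gamma)$ is a $\Gamma$-character. This uses the composition rule
\[
((\tilde\gamma\circ\gamma)'(z))^\alpha=((\tilde\gamma')^\alpha\circ\gamma(z))\,(\gamma'(z))^\alpha,
\]
recorded earlier in the text as a consequence of the consistent definition of the logarithms $\log\gamma'$. Applying (b) twice and once to $\tilde\gamma\circ\gamma$ and comparing yields $\chi(\tilde\gamma\circ\gamma)=\chi(\tilde\gamma)\chi(\gamma)$. The only point requiring a little care is ensuring that the choice of $\alpha$-power is globally consistent on $\Gamma$ so that this multiplicativity is genuine and not just multiplicativity modulo a root of unity; but this is precisely what the cocycle-type normalization of $(\gamma')^\alpha$ laid out before the proposition guarantees, so the main "obstacle" is really just bookkeeping rather than a substantive difficulty.
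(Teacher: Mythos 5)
Your proposal is correct and follows essentially the same route as the paper: deduce from (a) that $(\gamma')^\alpha f\circ\gamma$ and $f$ have equal modulus, conclude they differ by a unimodular constant $\chi(\gamma)$, and then verify multiplicativity of $\chi$ via the cocycle identity $((\tilde\gamma\circ\gamma)')^\alpha=((\tilde\gamma')^\alpha\circ\gamma)(\gamma')^\alpha$. The only difference is that you spell out, via the quotient $\chi_\gamma$ and the maximum principle, the step the paper merely asserts (equal modulus forces a unimodular constant factor), and you record the trivial direction (b)$\Rightarrow$(a) explicitly.
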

 
\begin{proof}
Clearly, $(\mathrm{b})\implies(\mathrm{a})$, so we will obtain the remaining
implication $(\mathrm{a})\implies(\mathrm{b})$.
We observe from (a) that for given $\gamma\in\Gamma$, the two holomorphic 
functions $(\gamma')^\alpha f\circ\gamma$ and $f$ have the same modulus. This
is only possible if one is a unimodular constant times the other, that is,
\[
(\gamma')^\alpha f\circ\gamma=\chi(\gamma)f
\]
holds for some \emph{constant} $\chi(\gamma)$
of modulus $1$. All that remains is to show that $\chi$ is a 
$(\Gamma,q)$-root character. To this end, we pick two elements 
$\gamma,\tilde\gamma\in\Gamma$, and observe that 
\[
\chi^q(\tilde\gamma\circ\gamma)f^q=((\tilde\gamma\circ\gamma)')^p 
f^q\circ\tilde\gamma\circ\gamma=(\gamma')^p((\tilde\gamma')^p\circ\gamma)
\,f^q\circ\tilde\gamma\circ\gamma=
\chi^q(\gamma)(\tilde\gamma')^p f^q\circ\tilde\gamma=
\chi^q(\gamma)\chi^q(\tilde\gamma)f^q,
\]
which shows that 
$\chi^q(\tilde\gamma\circ\gamma)=\chi^q(\gamma)\chi^q(\tilde\gamma)$
and hence that $\chi$ is a $(\Gamma,q)$-root character.
\end{proof}

A function $f$ which meets condition (b) of Proposition \ref{prop-periodic1.2}
is said to be $q$-\emph{root character-periodic (or modular) of weight 
$\alpha$}, with respect to the character $\chi:\Gamma\to\Te$. Here, we could 
mention that when $\alpha=1$, they are character-periodic of weight $1$, and
called \emph{Prym differentials}. 

It is a natural question when there exist nontrivial functions $f$ with 
property (a) of Proposition \ref{prop-periodic1.2}, for a given real number 
$\alpha$. For instance, if this does not happen for irrational $\alpha$, 
then Proposition \ref{prop-periodic1.2} gives a rather complete picture. 
To sort this matter out, we consult Proposition \ref{prop-periodic1.1}, 
which asserts that the associated function $F(z):=(1-|z|^2)^\alpha|f(z)|\ge0$ 
is $\Gamma$-periodic, and note that the (real-valued) logarithm
\begin{equation}
\log F(z)=\alpha\log(1-|z|^2)+\log|f(z)|
\label{eq-logrel1.01}
\end{equation}
is $\Gamma$-periodic as well.
We now turn to 
% It turns out to be a natural point of departure to begin with 
the logarithmic monopole $U(z,w)$ for the surface $\mathcal{S}\cong\D/\Gamma$,
which has 
\begin{equation}
(\hDelta^{\mathcal{S}} U(\cdot,w))\diff A_{\mathcal{S}}=(\hDelta U(\cdot,w))
\diff A
=-\frac{1}{2a(\Gamma)}\diff A_{\mathbb{H}}
+\frac{1}{2}\sum_{\gamma\in\Gamma}\delta_{\gamma(w)}
\label{eq-monopole2}
\end{equation}
in the sense of distribution theory, where $\delta_{\zeta}$ denotes is the
unit point mass at $\zeta$, considered as a $2$-form. 
%Here, $a(\Gamma)$ stands for the $\diff A_{\mathbb{H}}$-area of the 
%fundamental polygon $D_\Gamma$. 
Note that on the right-hand side of \eqref{eq-monopole2}, we have a 
$\frac12$ point mass per tile (here, a tile is the image of $D_\Gamma$ under an 
element of $\Gamma$), which is perfectly compensated on each tile by the 
hyperbolically uniform measure $-\frac{1}{2a(\Gamma)}\diff A_{\mathbb H}$. 
As we apply the Laplacian to the relation \eqref{eq-logrel1.01}, we find that
\begin{equation}
(\hDelta\log F)\diff A=-\alpha\diff A_{\mathbb{H}}+
\frac12\sum_{\zeta\in\mathrm{Z}(f)}\delta_\zeta,
\label{eq-logrel1.02}
\end{equation}
where $\mathrm{Z}(f)$ denotes the zeros of $f$, counting multiplicities.
Note that since $F$ was $\Gamma$-periodic, the zero set $\mathrm{Z}(f)$
is $\Gamma$-periodic as well. As the surface $\mathcal{S}$ was compact, 
$F$ (and equivalently $f$) can have only finitely many zeros in 
$\D/\Gamma$, say $\zeta_1,\ldots,\zeta_n\in \bar D_\Gamma$, where some of 
the points are allowed to be on the boundary of the fundamental polygon.  
We now form the function $V_F$, 
\[
V_F(z):=\sum_{j=1}^{n}U(z,\zeta_j),
\] 
which is $\Gamma$-periodic and has Laplacian
\[
(\hDelta V_F)\diff A=\sum_{j=1}^{n}(\hDelta U(\cdot,\zeta_j))\diff A
=-\frac{n}{2a(\Gamma)}\diff A_{\mathbb{H}}+\frac12\sum_{j=1}^{n}
\sum_{\gamma\in\Gamma}\delta_{\gamma(\zeta_j)},
\]
in the sense of distribution theory. Moreover, since the points 
$\gamma(\zeta_j)$, with $j=1,\ldots,n$ and $\gamma\in\Gamma$, run through
the zero set $\mathrm{Z}(f)$, the above relation simplifies to 
\begin{equation}
(\hDelta V_F)\diff A
=-\frac{n}{2a(\Gamma)}\diff A_{\mathbb{H}}+\frac12\sum_{\zeta\in\mathrm{Z}(f)}
\delta_{\zeta},
\label{eq-logrel1.03}
\end{equation}
which we may compare with \eqref{eq-logrel1.02}. 
The difference of $V_F$ and $\log F$ is $\Gamma$-periodic, with Laplacian
\begin{equation}
(\hDelta (V_F-\log F))\diff A(z)
=\bigg(\alpha-\frac{n}{2a(\Gamma)}\bigg)\diff A_{\mathbb{H}},
\label{eq-logrel1.04}
\end{equation}
which expression has constant sign.
Since a subharmonic function on a compact Riemann surface must be constant,
we conclude that this is only possible if $V_F-\log F$ is constant and hence
$\alpha=\frac{n}{2a(\Gamma)}$. Finally, an application of the Gauss-Bonnet 
theorem gives that $a(\Gamma)=g-1$, where $g\ge2$ is the genus of the surface
$\mathcal{S}\cong\D/\Gamma$, so that in particular $\alpha=\frac{n}{2(g-1)}$
is rational. 
We gather these simple observations in a proposition.

\begin{prop}
Let $\mathcal{S}\cong\D/\Gamma$ be a compact Riemann surface of genus 
$g\ge2$. Then the $\diff A_{\mathbb{H}}$-area of the fundamental polygon $D_\Gamma$
equals $a(\Gamma)=g-1$.
Suppose $f:\D\to\C$ is holomorphic and such that the associated
function $F(z):=(1-|z|^2)^\alpha|f(z)|$ is $\Gamma$-periodic for some real
parameter $\alpha$. If $f$ is nontrivial, then $n:=2a(\Gamma)\alpha\,$ is a
nonnegative integer, and $F$ takes the form 
\begin{equation}
F=C\exp\bigg(\sum_{j=1}^{n} U(\cdot,\zeta_j)\bigg),
\label{eq-Frep1}
\end{equation}
where $U(z,w)$ is the logarithmic monopole for $\mathcal{S}$, $C$ is a positive
constant, and $\zeta_1,\ldots,\zeta_n$ enumerate the zeros of $F$ in 
$\D/\Gamma$. Moreover, any function of the form \eqref{eq-Frep1} can be written
as $F(z)=(1-|z|^2)^\alpha|f(z)|$ for some holomorphic function $f$ on $\D$
if $\alpha=\frac{n}{2a(\Gamma)}$.  
\label{prop-Fcharact1.01}
\end{prop}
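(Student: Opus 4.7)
The plan is as follows. The area identity $a(\Gamma)=g-1$ follows from the Gauss--Bonnet theorem applied to $\mathcal{S}$ with the Poincar\'e metric of constant negative curvature, once the normalization of $\diff A_{\mathbb H}$ used here is matched against the Euler characteristic $\chi(\mathcal{S})=2-2g$.

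For the forward direction, the argument is already essentially carried out in the paragraph preceding the proposition, so what remains is to assemble the pieces. Given nontrivial holomorphic $f:\D\to\C$ such that $F=(1-|z|^2)^\alpha|f|$ is $\Gamma$-periodic, the zero set $\mathrm{Z}(f)$ descends to a finite set $\{\zeta_1,\ldots,\zeta_n\}\subset\bar D_\Gamma$ by compactness of $\mathcal{S}$. Form the $\Gamma$-periodic function $V_F:=\sum_{j=1}^n U(\cdot,\zeta_j)$; then \eqref{eq-logrel1.04} exhibits $V_F-\log F$ as $\Gamma$-periodic with Laplacian of constant sign. The key step is the observation that on a compact Riemann surface without boundary, the integral of the Laplacian of a smooth periodic function vanishes (by Stokes' theorem), so the constant $\alpha-\tfrac{n}{2a(\Gamma)}$ must be zero. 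This forces $\alpha=\tfrac{n}{2a(\Gamma)}$, so that $n=2a(\Gamma)\alpha$ is automatically a nonnegative integer. With the Laplacian of $V_F-\log F$ now zero, this difference is a harmonic $\Gamma$-periodic function on $\D$, equivalently a harmonic function on the compact surface, hence constant by the maximum principle. Exponentiating produces the representation \eqref{eq-Frep1}.

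For the converse, suppose $F=C\exp\bigl(\sum_{j=1}^n U(\cdot,\zeta_j)\bigr)$ with $\alpha=\tfrac{n}{2a(\Gamma)}$. Set
\[
h(z):=\log F(z)-\alpha\log(1-|z|^2),
\]
which is defined on $\D$ off the $\Gamma$-orbit of $\{\zeta_1,\ldots,\zeta_n\}$. A direct computation using \eqref{eq-monopole2} together with the identity $\hDelta\log(1-|z|^2)=-(1-|z|^2)^{-2}$ shows that the bulk contributions cancel precisely because of the chosen value of $\alpha$, leaving $\hDelta h=\tfrac12\sum_\zeta\delta_\zeta$, where the sum runs over the lifted zero set counted with multiplicity. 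Locally near each lifted zero $\zeta_j$ of multiplicity $m_j$, the expansion $U(z,\zeta_j)=\log|z-\zeta_j|+\mathrm{O}(1)$ shows that $h$ has the form $m_j\log|z-\zeta_j|$ plus a harmonic function, so the meromorphic $1$-form $2\partial h$ on $\D$ has simple poles with integer residues $m_j$ at the lifted zeros. Since $\D$ is simply connected, the formula
\[
f(z):=\exp\bigg(\int_{z_0}^z 2\partial h\bigg)
\]
defines a single-valued holomorphic function on $\D$ (the monodromy around each pole is $\exp(2\pi\imag m_j)=1$), with $\log|f|=h$ up to a real additive constant. Absorbing the constant into $f$ gives $(1-|z|^2)^\alpha|f|=F$ as required.

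The main technical point is the residue check in the converse: one must verify that the residues of $2\partial h$ at the lifted zeros are exactly integers, so that $f$ is single-valued rather than multi-valued. This is immediate from the local asymptotic of $U$ combined with the integrality of the multiplicities $m_j$. The remaining ingredients---Gauss--Bonnet, Stokes' theorem, and the maximum principle on compact Riemann surfaces---are standard.
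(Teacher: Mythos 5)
Your argument is correct, and for the forward direction it is essentially the paper's: both rest on the computations \eqref{eq-logrel1.02}--\eqref{eq-logrel1.04}, the only cosmetic difference being that you first kill the constant $\alpha-\frac{n}{2a(\Gamma)}$ by integrating the Laplacian over the compact surface (Stokes) and then invoke the maximum principle, whereas the paper gets both conclusions at once from the fact that a subharmonic function on a compact Riemann surface is constant; the Gauss--Bonnet step is identical. Where you genuinely diverge is the converse, which is the part the paper actually writes out: the paper prescribes the zero divisor $\{\gamma(\zeta_j)\}$ via the Weierstrass factorization theorem on $\D$, observes that $\log|f|-\log|h|$ is harmonic, and completes $f=\e^{U}h$ with a harmonic conjugate on the simply connected disk, while you integrate the meromorphic differential $2\partial h\,\diff z$ directly and obtain single-valuedness of $f=\exp\int 2\partial h$ from the integrality of the residues $m_j$. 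Both routes work; yours trades the citation of Weierstrass factorization for a monodromy/residue check, and the paper's trades the monodromy discussion for an off-the-shelf existence theorem. Two small points you should make explicit in your version: the single-valuedness argument really uses that every cycle in $\D$ minus the (discrete) lifted zero set is homologous to a sum of small loops about the punctures (this is what simple connectivity of $\D$ buys you), and your $f$ is a priori defined only off the lifted zeros, so you need Riemann's removable singularity theorem (using $|f|=\e^{h}\sim|z-\gamma(\zeta_j)|^{m_j}$, hence boundedness near the puncture) to extend it holomorphically with a zero of order $m_j$ there. Neither point is a gap of substance, just steps worth recording.
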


\begin{proof}
All the assertions are settled by the arguments preceding the statement of the
proposition, except that it remains to show that a function $F$ given by 
\eqref{eq-Frep1} can be written as $F(z)=(1-|z|^2)^\alpha|f(z)|$ with 
$\alpha=\frac{n}{2a(\Gamma)}$, for some holomorphic $f$. 
We see from \eqref{eq-Frep1} that the purported $f$ should have
$|f(z)|=(1-|z|^2)^{-\alpha}F(z)$ and hence
\[
\log|f(z)|=-\alpha\log(1-|z|^2)+\log F(z)=-\alpha\log(1-|z|^2)+\log C
+\sum_{j=1}^{n} U(z,\zeta_j).
\]
Taking Laplacians on both sides we get that
\[
\hDelta\log|f(z)|=\frac{\alpha}{(1-|z|^2)^2}+\sum_{j=1}^{n} \hDelta_z 
U(z,\zeta_j)=\frac{\alpha-\frac{n}{2a(\Gamma)}}{(1-|z|^2)^2}
+\frac12\sum_{j=1}^{n}\sum_{\gamma\in\Gamma} \delta_{\gamma(\zeta_j)}=
\frac12\sum_{j=1}^{n}\sum_{\gamma\in\Gamma} \delta_{\gamma(\zeta_j)},
\]
in the sense of distribution theory, where we use that 
$\alpha=\frac{n}{2a(\Gamma)}$. This just asks for $f$ to have zeros 
(counting multiplicities) along the sequence of points $\gamma(\zeta_j)$, 
with $\gamma\in\Gamma$ and $j=1,\ldots,n$. A version of the Weierstrass 
factorization theorem (see, e.g., \cite{Rudbook}) assures us that there 
exists a holomorphic function $h:\D\to\C$ with precisely the zeros prescribed
for $f$, and then the difference $u:=\log|f|-\log|h|$ must be harmonic.
By forming the harmonic conjugate to $u$ we obtain a holomorphic function
$U:\D\to\C$ with real part equal to $u$. Finally, we realize that the choice
$f:=\e^U h$ is holomorphic with the right modulus so that 
$F(z)=(1-|z|^2)|f(z)|$ holds. 
\end{proof}

\begin{rem}
For some related geometric complex analysis on compact Riemann surfaces, 
involving forms and sections, see the textbooks \cite{Bakerbook}, 
\cite{Fay}, \cite{Hej}.
\end{rem}

\subsection{Character-modular forms, ergodic geodesic flow,
and hyperbolic zero packing}

We keep the setting of a compact Riemann surface $\mathcal{S}$ 
with genus $\ge2$, so that $\mathcal{S}\cong\D/\Gamma$ for a Fuchsian group
with a fundamental domain $D_\Gamma$. Then, as a matter of definition (see 
\eqref{eq-rhosurf}),
\[
\rho_{n,\beta}(\mathcal{S})=
\inf_{b,w_1,\ldots,w_n}\big\langle
\big(b\,\e^{\beta U^{\langle n\rangle}}-1\big)^2\big\rangle_{\mathcal{S}},
\]
where $b$ is a positive real and $w_1,\ldots w_n\in\mathcal{S}$. 
We would like to see how this fares compared with the hyperbolic densities 
$\rho_{\alpha,\beta}(\mathbb{H})$ defined in Subsection \ref{hypzero-alphabeta}. 
We use Proposition \ref{prop-Fcharact1.01} to see that there exists a 
holomorphic function $f$ with zeros (counting multiplicities) exactly at 
the points $\gamma(w_j)$ when $j=1,\ldots,n$ and $\gamma\in\Gamma$, such that 
\[
b\,\e^{\beta U^{\langle n\rangle}(z)}=(1-|z|^2)^{\frac{n\beta}{2a(\Gamma)}}
|f(z)|^\beta,
\]
and both sides express $\Gamma$-periodic functions. Now, for the right-hand 
side we could try to compute the discrepancy density with respect to the 
disk $\D$ as well:
\[
\rho_{\alpha',\beta}(f):=
\liminf_{r\to1^-}\frac{1}{\log\frac{1}{1-r^2}}\int_{\D(0,r)}
\big((1-|z|^2)^{\alpha'}|f(z)|^\beta-1\big)^2\frac{\diff A(z)}{1-|z|^2},
\]
with $\alpha':=\frac{n\beta}{2a(\Gamma)}$. The following result tells us 
that the above average can be achieved by integrating over one tile with 
respect to the Fuchsian group $\Gamma$. 

\begin{prop}
In the above setting, with $f:\D\to\C$ holomorphic and the associated function
$z\mapsto(1-|z|^2)^{\alpha'}|f(z)|^\beta$ assumed $\Gamma$-periodic, we have that
\[
\lim_{r\to1^-}\frac{1}{\log\frac{1}{1-r^2}}\int_{\D(0,r)}
\big((1-|z|^2)^{\alpha'}|f(z)|^\beta-1\big)^2\frac{\diff A(z)}{1-|z|^2}
=\frac{1}{a({\Gamma})}\int_{D_\Gamma}
\big((1-|z|^2)^{\alpha'}|f(z)|^\beta-1\big)^2\diff A_{\mathbb{H}}.
\]
\end{prop}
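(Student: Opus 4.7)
The plan is as follows. Set $\Psi(z):=\bigl((1-|z|^2)^{\alpha'}|f(z)|^\beta-1\bigr)^2$. By hypothesis this is $\Gamma$-periodic and continuous on $\D$ (it takes the value $1$ at zeros of $f$), hence bounded, and it descends to a continuous function on the compact surface $\mathcal{S}\cong\D/\Gamma$. Write $\bar\Psi:=\frac{1}{a(\Gamma)}\int_{D_\Gamma}\Psi\,\diff A_{\mathbb{H}}$ for the spatial average.

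First I would pass to hyperbolic polar coordinates around the origin, $z=\tanh(s/2)\,\e^{\imag\theta}$. A direct computation gives
\[
\frac{\diff A(z)}{1-|z|^2}=\frac{\tanh(s/2)}{2\pi}\,\diff s\,\diff\theta,\qquad
\diff A_{\mathbb{H}}(z)=\frac{\sinh s}{4\pi}\,\diff s\,\diff\theta.
\]
Setting $S_r:=\log\frac{1+r}{1-r}=2\,\mathrm{arctanh}\,r$ and writing $M_s(\Psi):=\frac{1}{2\pi}\int_0^{2\pi}\Psi(\tanh(s/2)\,\e^{\imag\theta})\,\diff\theta$ for the mean of $\Psi$ along the hyperbolic circle of radius $s$ about the origin, this yields
\[
\int_{\D(0,r)}\Psi(z)\frac{\diff A(z)}{1-|z|^2}=\int_0^{S_r}\tanh(s/2)\,M_s(\Psi)\,\diff s,\qquad \log\frac{1}{1-r^2}=\int_0^{S_r}\tanh(s/2)\,\diff s,
\]
so the proposition reduces to showing that the quotient of these two one-dimensional integrals tends to $\bar\Psi$ as $r\to 1^-$.

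The key step is to prove that $M_s(\Psi)\to\bar\Psi$ as $s\to\infty$: the classical equidistribution of large hyperbolic circles on the compact hyperbolic surface $\mathcal{S}$. A clean route is spectral. Expand $\Psi$ in an orthonormal basis $\{\phi_k\}_{k\ge 0}$ of Laplace--Beltrami eigenfunctions on $\mathcal{S}$, with eigenvalues $0=\lambda_0<\lambda_1\le\lambda_2\le\cdots$. Lifting to $\D$, each $\phi_k$ is a $\Gamma$-periodic eigenfunction of the hyperbolic Laplacian, so $M_s(\phi_k)(0)=\Phi_{\lambda_k}(s)\,\phi_k(0)$, where $\Phi_\lambda$ is the Harish-Chandra (Legendre-type) spherical function; one has $\Phi_0\equiv 1$ while $|\Phi_\lambda(s)|\to 0$ as $s\to\infty$ for every $\lambda>0$. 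Approximating the continuous function $\Psi$ by smooth ones (for which the spectral series converges uniformly) gives $M_s(\Psi)\to c_0\phi_0=\bar\Psi$. Alternatively, the same equidistribution is a direct consequence of mixing of the geodesic flow on $T^1\mathcal{S}$.

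Given circle equidistribution, the conclusion is a Stolz--Ces\`aro argument: since $\tanh(s/2)\to 1$, the total weight $\int_0^{S_r}\tanh(s/2)\,\diff s=\log\frac{1}{1-r^2}$ diverges, and $M_s(\Psi)$ is bounded and converges to $\bar\Psi$, so the ratio of the two integrals above converges to $\bar\Psi$ as $r\to 1^-$. The principal obstacle is, as expected, the equidistribution step. The weight $(1-|z|^2)^{-1}$ in the measure is not $\Gamma$-invariant, which obstructs a direct tiling of $\D(0,r)$ by fundamental polygons; the passage to hyperbolic polar coordinates replaces this non-invariance by a one-variable averaging, at the cost of importing the equidistribution input from hyperbolic harmonic analysis (or ergodic theory).
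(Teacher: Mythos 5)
Your argument is correct, and it takes a route that is related to, but genuinely distinct from, the paper's. The paper also reduces matters to a ``time average versus space average'' statement, but it does so along \emph{radial geodesics}: it invokes Hopf's ergodicity of the geodesic flow on the compact surface to get convergence of the average of $\Phi_{\alpha',\beta,f}$ along the ray $t\zeta$, $0<t<r$, against $\frac{\diff t}{1-t^2}$ (noting that the extra factor $t$ and the discrepancy between $\log\frac{1}{1-r^2}$ and $\log\frac{1+r}{1-r}$ are harmless), and then integrates over the direction $\zeta\in\Te$. You instead integrate out the angular variable first, reducing everything to the \emph{circle averages} $M_s(\Psi)$, prove their equidistribution on the compact quotient spectrally (decay of the spherical functions $\Phi_\lambda$ for $\lambda>0$, after smoothing the continuous, $\Gamma$-periodic, hence bounded, function $\Psi$), and finish with a Stolz--Ces\`aro argument using the weight $\tanh(s/2)$ -- whose computation, together with $\int_0^{S_r}\tanh(s/2)\,\diff s=\log\frac{1}{1-r^2}$, is correct. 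What your route buys is the avoidance of an almost-everywhere subtlety implicit in the paper's sketch: Birkhoff/Hopf ergodicity gives convergence for Liouville-a.e.\ unit tangent vector, and the rays emanating from a single basepoint form a null set, so upgrading to a.e.\ direction $\zeta$ (and then applying dominated convergence in $\zeta$) needs an extra, standard but nontrivial, argument; circle averages sidestep this entirely and the spherical-function decay even makes the statement quantitative. What the paper's route buys is softness: it needs no spectral theory of the Laplacian on $\mathcal{S}$, only the classical ergodicity of geodesic flow, which is also the input you mention as your alternative (mixing implies equidistribution of large circles). Both proofs are valid; yours is arguably more self-contained at the key equidistribution step.
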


\begin{proof}
To simplify the notation, we write
\[
\Phi_{\alpha',\beta,f}(z):=\big((1-|z|^2)^{\alpha'}|f(z)|^\beta-1\big)^2,
\]
which is $\Gamma$-periodic and hence a well-defined function on 
$\mathcal{S}\cong\D/\Gamma$. The claim can now be expressed in the form
\begin{equation}
\lim_{r\to1^-}\frac{\int_{\D(0,r)}\Phi_{\alpha',\beta,f}(z)\frac{\diff A(z)}{1-|z|^2}}
{\log\frac{1}{1-r^2}}=\frac{\int_{\mathcal{D}_\Gamma}\Phi_{\alpha',\beta,f}(z)
\diff A_{\mathbb{H}}(z)}{\int_{\mathcal{D}_\Gamma}\diff A_{\mathbb{H}}},
\label{eq-hypaverage2.1}
\end{equation}
that is, averages formed in two different ways coincide. Such assertions
remind us of ergodic theory. Indeed, it follows from the well-known 
ergodicity of geodesic flow on compact hyperbolic surfaces, originally 
due to Hopf and later extended to higher-dimensional manifolds by Anosov 
(see, e.g., Hopf's expository paper \cite{Hopf}). 
The ``time average'' of $\Phi_f$ over the geodesic ray $z=t\zeta$ with 
$|\zeta|=1$ and radial parameter $t$ with $0<t<r$, is
\begin{equation}
\frac{\int_0^r\Phi_f(t\zeta)\frac{\diff t}{1-t^2}}
{\int_0^r\frac{\diff t}{1-t^2}}=
\frac{\int_0^r\Phi_f(t\zeta)\frac{\diff t}{1-t^2}}
{\frac12\log\frac{1+r}{1-r}}, 
\label{eq-hypaverage2.2}
\end{equation}
while the ``space average'' is expressed by the right-hand side of
\eqref{eq-hypaverage2.1}. The limit of the ratio \eqref{eq-hypaverage2.2}
as $r\to1^-$ is clearly unperturbed if we replace $\Phi_f(t\zeta)$ by
$t\Phi_f(t\zeta)$, and hence \eqref{eq-hypaverage2.1} results from 
\eqref{eq-hypaverage2.2} by integration over the circle $\Te$ in $\zeta$.
Here, we used the elementary observation that
\[
\lim_{r\to1^-}\frac{\log\frac{1}{1-r^2}}{\log\frac{1+r}{1-r}}=1.
\]
The proof is complete.
\end{proof}

\begin{rem}
Using more refined control of the error term in the ergodicity of geodesic
flow, it is possible to obtain a comparison of the densities 
$\rho_{n,\beta}(\mathcal{S})$ and $\rho_{\alpha',\beta}(\mathbb{H})$, to the 
effect that
\[
\rho_{\alpha',\beta}(\mathbb{H})\le\rho_{n,\beta}(\mathcal{S})\quad \text{where}
\quad\alpha'=\frac{n\beta}{2a(\Gamma)}\quad\text{and}\quad 
\mathcal{S}\cong\D/\Gamma.
\]
\label{rem-final}
The question comes to mind if, for fixed $\alpha'$ and $\beta$, the 
right-hand side expression can be made arbitrarily close to the left-hand 
side expression by varying suitably the surface $\mathcal{S}$ and the 
number of points $n$. This may not be the case. 
\end{rem}

\section{Fekete configurations, geometric zero packing,
and heat flow on compact Riemann surfaces}
\label{sec-Feketegeomzero}

This section takes the form of a rather extensive remark on heat flows
and the relation between Fekete-type problems and geometric zero packing on a
compact Riemann surface $\mathcal{S}$ supplied with a metric of constant 
Gaussian curvature. The particular normalizations and notational conventions
are kept as before.

\subsection{Integration along heat flow}
We begin with the comparison of Remark \ref{rem-energies}, in the context of
heat flow on a compact Riemann surface $\mathcal{S}$. 
First, let 
\begin{equation}
U^{\langle n\rangle}(z)=U(z,w_1)+\cdots+U(z,w_n),\qquad z\in\mathcal{S},
\label{eq-fun:u0}
\end{equation}
be a sum of logarithmic monopoles, each normalized so that 
\eqref{eq-rhosurf0.1} holds. For $t>0$, we let $U^{\langle n\rangle}_t(z)$ denote
the function of $(z,t)$ for $z\in\mathcal{S}$ and $t>0$ which solves 
the \emph{heat equation}
\begin{equation}
\partial_t U^{\langle n\rangle}_t(z)=\hDelta^{\mathcal{S}}U^{\langle n\rangle}_t(z),
\qquad z\in\mathcal{S},
\label{eq-heateq}
\end{equation}
with initial datum at $t=0$ given by $U^{\langle n\rangle}(z)$. 
Note that in view of 
\eqref{eq-rhosurf0.1}, it follows that $U^{\langle n\rangle}_t\to0$ as 
$t\to+\infty$.
Then according to Remark \ref{rem-energies} the Fekete configuration
problem is concerned with minimizing the Dirichlet energy
\begin{equation}
\int_{\mathcal{S}}
|\nabla^{\mathcal{S}} U^{\langle n\rangle}_t|^2\diff A_{\mathcal{S}}
\label{eq-Direnergy}
\end{equation}
in the limit as $t\to0^+$ over all point configurations $w_1,\ldots,w_n$, 
where the gradient and area elements are geometrically adjusted. 
On the other hand, the $\beta\to0$ geometric zero
packing problem \eqref{eq-rhosurf0.0} minimizes instead the Bergman energy
\begin{equation}
\int_{\mathcal{S}}
(U^{\langle n\rangle}_t)^2\diff A_{\mathcal{S}}
\label{eq-Bergenergy}
\end{equation}
for $t=0$.
We would like to relate, if possible, these two problems. Suppose we are lucky
and the minimizing configuration for the Dirichlet energy \eqref{eq-Direnergy} 
as $t\to0^+$ \emph{actually minimizes for all $t>0$ at the same time}.
Then \emph{the same configuration is also minimizing for the Bergman energy 
as well}.
To see this, we calculate as follows:
\begin{multline}
\int_0^{+\infty}\int_{\mathcal{S}}
|\nabla^{\mathcal{S}}U^{\langle n\rangle}_t|^2\diff A_{\mathcal{S}}\diff t
=-4\int_0^{+\infty}
\int_{\mathcal{S}}U^{\langle n\rangle}_t\hDelta^{\mathcal{S}}U^{\langle n\rangle}_t
\diff A_{\mathcal{S}}\diff t
\\
=-4\int_0^{+\infty}
\int_{\mathcal{S}}U^{\langle n\rangle}_t
\partial_t U^{\langle n\rangle}_t\diff A_{\mathcal{S}}\diff t=
-2\int_{\mathcal{S}}\int_0^{+\infty}
\partial_t\{(U^{\langle n\rangle}_t)^2\}\diff t\diff A_{\mathcal{S}}
=2\int_{\mathcal{S}}(U^{\langle n\rangle})^2\diff A_{\mathcal{S}}.
\label{eq-DirBergenergies}
\end{multline}
In any case, the formula \eqref{eq-DirBergenergies} shows that the Bergman 
energy is the time integral of the Dirichlet energy along the heat flow.
We now search for an analogue of \eqref{eq-DirBergenergies} which applies for
more general values of the parameter $\beta$.

\subsection{Integration along $\beta$-deformed heat flow}
Let $V^{\langle n\rangle}_{b,\beta}$ denote the function
\[
V^{\langle n\rangle}_{b,\beta}:=\frac{b\e^{\beta U^{\langle n\rangle}}-1}{\beta},
\]
where $b$ is positive and real and $U^{\langle n\rangle}$ is as in 
\eqref{eq-fun:u0}; the minimum
\[
\inf_b\langle(V^{\langle n\rangle}_{b,\beta})^2\rangle_{\mathcal{S}},
\]
which is of interest in connection with the problem of geometric $\beta$-zero
packing, is then attained for
\[
b=b(\beta):=
\frac{\langle\e^{\beta U^{\langle n\rangle}}\rangle_{\mathcal{S}}}
{\langle\e^{2\beta U^{\langle n\rangle}}\rangle_{\mathcal{S}}}.
\] 
To simplify the notation, we write
$V^{\langle n\rangle}_\beta:=V^{\langle n\rangle}_{b(\beta),\beta}$. After all, 
the geometric zero packing problem involves the same minimization over 
all constants $b$ and all point configurations $w_1,\ldots,w_n$ 
(which determine the function $U^{\langle n\rangle}$).  
We then observe that $V^{\langle n\rangle}_{\beta}\to U^{\langle n\rangle}$ as 
$\beta\to0$, which suggests that we may think of $V^{\langle n\rangle}_\beta$ 
as a (nonlinear) $\beta$-deformation of the function 
$U^{\langle n\rangle}$. The average of $V^{\langle n\rangle}_\beta$ equals
\[
\langle V^{\langle n\rangle}_\beta\rangle_{\mathcal{S}}=
\frac{1}{\beta}\Bigg(\frac{\langle\e^{\beta U^{\langle n\rangle}}
\rangle^2_{\mathcal{S}}}
{\langle\e^{2\beta U^{\langle n\rangle}}\rangle_{\mathcal{S}}}-1
\Bigg)
\]
which vanishes only when $\beta=0$. This means that for $\beta\ne0$, 
after infinite time the heat limit with initial datum $V^{\langle n\rangle}_\beta$ 
equals a nonzero constant, which makes the approach underlying the formula 
\eqref{eq-DirBergenergies} difficult to carry out. 
However, it is indeed the case that a weighted average of 
$V^{\langle n\rangle}_\beta$ vanishes: 
\[
\langle V^{\langle n\rangle}_\beta\e^{\beta U^{\langle n\rangle}}\rangle_{\mathcal{S}}=0.
\]
To use this fact, we equip the surface $\mathcal{S}$ with the 
$\beta$-deformed average 
\[
\langle f\rangle_{\beta,\mathcal{S}}:=
\langle f\e^{2\beta U^{\langle n\rangle}}\rangle_{\mathcal{S}}
\]
which we understand as a deformation of the area element, and correspondingly 
we may consider the deformed geometric Laplacian 
\[
\hDelta^{\beta,\mathcal{S}}:=\e^{-2\beta U^{\langle n\rangle}}\hDelta^{\mathcal{S}}.
\]
Associated with this $\beta$-deformed Laplacian we have a $\beta$-deformed
heat flow, which we express in terms of the operator 
$\Hop_{\beta,t}=\e^{t\hDelta^{\beta,\mathcal{S}}}$. Here, to be more precise,
$v_t=\Hop_{\beta,t} v_0$ means that $v_t$ solves the geometrically deformed 
heat equation 
\[
\partial_t v_t=\hDelta^{\beta,\mathcal{S}}v_t
\]
with initial datum $v_0$ at $t=0$. Let $\tilde V^{\langle n\rangle}_\beta
:=\e^{-\beta U^{\langle n\rangle}}V^{\langle n\rangle}_\beta$,
and observe that
\[
\lim_{t\to+\infty}
\Hop_{\beta,t}\tilde V^{\langle n\rangle}_\beta\to
\langle \tilde V^{\langle n\rangle}_\beta\rangle_{\beta,\mathcal{S}}
=\langle \e^{\beta U^{\langle n\rangle}} V^{\langle n\rangle}_\beta\rangle_{\mathcal{S}}=0,
\] 
which allows us to obtain a $\beta$-deformed analogue of the identity 
\eqref{eq-DirBergenergies}:
\begin{multline}
\frac{1}{2}\int_0^{+\infty}
\big\langle|\nabla^{\mathcal{S}}\Hop_{\beta,t} \tilde V^{\langle n\rangle}_\beta|^2
\big\rangle_{\mathcal{S}}\diff t=
-2\int_0^{+\infty}
\big\langle(\Hop_{\beta,t}\tilde V^{\langle n\rangle}_\beta)
\hDelta^{\mathcal{S}}\Hop_{\beta,t} \tilde V^{\langle n\rangle}_\beta)
\big\rangle_{\mathcal{S}}\diff t
\\
=-2\int_0^{+\infty}
\big\langle(\Hop_{\beta,t}\tilde V^{\langle n\rangle}_\beta)
\hDelta^{\beta,\mathcal{S}}\Hop_{\beta,t}\tilde V^{\langle n\rangle}_\beta)
\big\rangle_{\beta,\mathcal{S}}\diff t
=-2\int_0^{+\infty}
\big\langle(\Hop_{\beta,t}\tilde V^{\langle n\rangle}_\beta)
\partial_t\Hop_{\beta,t}\tilde V^{\langle n\rangle}_\beta\big\rangle_{\beta,\mathcal{S}}
\diff t
\\
=-\int_0^{+\infty}
\big\langle\partial_t\{(\Hop_{\beta,t}\tilde V^{\langle n\rangle}_\beta)^2\}
\big\rangle_{\beta,\mathcal{S}}\diff t
=\langle(\tilde V^{\langle n\rangle}_\beta)^2\rangle_{\beta,\mathcal{S}}
=\langle(V^{\langle n\rangle}_\beta)^2\rangle_{\mathcal{S}}.
\label{eq-DirBergenergies2}
\end{multline}
Again, if for each $t>0$, the Dirichlet energy
\begin{equation}
\big\langle|\nabla^{\mathcal{S}}\Hop_{\beta,t} \tilde V^{\langle n\rangle}_\beta|^2
\big\rangle_{\mathcal{S}} 
\label{eq-Direnergybeta}
\end{equation}
is minimized for one and the same function $U^{\langle n\rangle}$ 
of the form \eqref{eq-fun:u0}, then the same goes for the Bergman norm 
on the right-hand side. This would suggest the introduction of a 
$\beta$-deformed Fekete problem, which asks for
the minimization of \eqref{eq-Direnergybeta} in the limit as $t\to0^+$ (after 
renormalization to remove the infinities which arise from the singularities at 
$w_1,\ldots,w_n$).
As a side remark, if we recall that
\[
\tilde V^{\langle n\rangle}_\beta=\e^{-\beta U^{\langle n\rangle}}V^{\langle n\rangle}_\beta
=\frac{b-\e^{-\beta U^{\langle n\rangle}}}{\beta},
\] 
and note that constants are preserved under weighted heat evolution, 
it might be reasonable to try to express the weighted heat evolution in
the form
\[
\Hop_{\beta,t}\tilde V^{\langle n\rangle}_\beta(z)=
\frac{b-\e^{-\beta \Theta(z,t)}}{\beta}.
\]
The function $\Theta(z,t)$ then evolves according to the nonlinear 
heat equation
\[
\partial_t \Theta=\e^{-2\beta U^{\langle n\rangle}}
\bigg(\hDelta^{\mathcal{S}} \Theta
-\frac{\beta}{4}
|\nabla^{\mathcal{S}}\Theta|^2\bigg),
\]
with initial datum $\Theta(z,0)=U^{\langle n\rangle}(z)$ for $t=0$.
The above equation has at least some superficial similarity with the KPZ 
(Kardar-Parisi-Zhang) equation if we let the configuration 
$\{w_1,\ldots,w_n\}$ (which determines $U^{\langle n\rangle}$ by \eqref{eq-fun:u0}) 
be stochastic and perhaps time-dependent. 
For the KPZ equation, see e.g. the survey paper \cite{Cor} and the original 
contribution by Kardar, Parisi, and Zhang \cite{KPZ}.


\begin{thebibliography}{9}

%\bibitem{Abrambook}
%Abramowitz, M., Stegun, I. A.,
%\emph{Handbook of mathematical functions with formulas, graphs, 
%and mathematical tables}.
%National Bureau of Standards Applied Mathematics Series, \textbf{55}.
%Washington, DC 1964. 

\bibitem{Abr} Abrikosov, A. A., \emph{Magnetic properties of group II 
superconductors}, Soviet Physics JETP (J. Exp. Theor. Phys.) 
\textbf{32} (5) (1957) 1174-1182. 

\bibitem{Ahlfors} 
Ahlfors, L. V., 
\emph{ Complex analysis. An introduction to the theory of analytic 
functions of one complex variable}. Third edition. International Series in 
Pure and Applied Mathematics. McGraw-Hill Book Co., New York, 1978. 

\bibitem{ABN}
Aftalion, A., Blanc, X., Nier, F., 
\emph{Lowest Landau level functional and Bargmann spaces for Bose-Einstein 
condensates}. 
J. Funct. Anal. \textbf{241} (2006), 661-702. 

\bibitem{AHM} Ameur, Y., Hedenmalm, H., Makarov, N., 
\emph{Berezin transform in polynomial Bergman spaces}. 
Comm. Pure Appl. Math. \textbf{63} (2010), no. 12, 1533-1584. 

%\bibitem{AbuHed} 
%Abuzyarova, N., Hedenmalm, H., \emph{Branch point area methods in conformal 
%mapping}. J. Anal. Math. \textbf{99} (2006), 177-198. 

%\bibitem{Ahl}
%Ahlfors, L. V., \emph{Some remarks on Teichm\"uller's space of Riemann 
%surfaces}. Ann. of Math. (2) \textbf{74} (1961), 171-191. 

\bibitem{AIPP} Astala, K., Ivrii, O., Per\"al\"a, A., Prause, I.,
\emph{Asymptotic variance of the Beurling transform}. Geom. Funct. Anal.
\textbf{25} (2015), 1647-1687.

\bibitem{AIM} Astala, K., Iwaniec, T., Martin, G., \emph{Elliptic partial 
differential equations and quasiconformal mappings in the plane}.
Princeton Mathematical Series, \textbf{48}. Princeton University Press, 
Princeton, NJ, 2009. 

%\bibitem{BM} Ba\~nuelos, R., Moore, C. N., \emph{Mean growth of Bloch 
%functions and Makarov's law of the iterated logarithm}.
%Proc. Amer. Math. Soc. \textbf{112} (1991), 851-854. 

\bibitem{Bakerbook}
Baker, H. F. \emph{Abelian functions}. 
Abel's theorem and the allied theory of theta functions. 
Reprint of the 1897 original. With a foreword by Igor Krichever. 
Cambridge Mathematical Library. Cambridge University Press, Cambridge, 1995. 

\bibitem{Ban}
Ba\~nuelos, R., \emph{Brownian motion and area functions}.
Indiana Univ. Math. J. \textbf{35} (1986), 643-668. 

%\bibitem{Bec} Becker, J., \emph{L\"ownersche Differentialgleichung und 
%quasikonform fortsetzbare schlichte Funktionen}. J. Reine Angew. Math. 
%\textbf{255} (1972), 23-43.

%\bibitem{BecPom} Becker, J., Pommerenke, Ch., \emph{Schlichtheitskriterien
%und Jordangebiete}. J. Reine Angew. Math. \textbf{354} (1984), 74-94.

%\bibitem{Blochbiog} 
%\emph{Andr\'e Bloch}. MacTutor history of Mathematics archive, University of
%St Andrews. http://www-history.mcs.st-andrews.ac.uk/Biographies/Bloch.html

\bibitem{Bor} Borichev, A., \emph{Private communication}.

\bibitem{Car} Carleman, T., \emph{Zur Theorie der Minimalfl\"achen}. 
Math. Z. \textbf{9} (1921), 154-160.

%\bibitem{ChaMar} 
%Chang, S.-Y. A., Marshall, D. E., \emph{On a sharp inequality concerning 
%the Dirichlet integral}. Amer. J. Math. \textbf{107} (1985), 1015-1033. 

%\bibitem{CluMac} Clunie, J. G., MacGregor, T. H., \emph{Radial growth of
%the derivative of univalent functions}. Comment. Math. Helv. \textbf{59}
%(1984), 362-375.

\bibitem{CRW}
Coifman, R. R., Rochberg, R., Weiss, G., \emph{Factorization theorems for 
Hardy spaces in several variables}.
Ann. of Math. (2) \textbf{103} (1976), 611-635. 

\bibitem{Cor}
Corwin, I.,
\emph{The Kardar-Parisi-Zhang equation and universality class}. 
Random Matrices Theory Appl. \textbf{1} (2012), no. 1, 1130001, 76 pp.

\bibitem{DGS}
Delsarte, P., Goethals, J. M., Seidel, J. J., \emph{Spherical codes and 
designs}. Geometriae Dedicata \textbf{6} (1977), no. 3, 363-388. 

%\bibitem{Dur}
%Duren, P. L., \emph{Univalent functions}.
%Grundlehren der Mathematischen Wissenschaften \textbf{259}. Springer-Verlag, 
%New York, 1983.

\bibitem{Fay}
Fay, J. D., \emph{Theta functions on Riemann surfaces}.
Lecture Notes in Mathematics, Vol. \textbf{352}. Springer-Verlag, 
Berlin-New York, 1973. 

\bibitem{Gar}
Garnett, J. B., \emph{Bounded analytic functions}. Pure and Applied 
Mathematics, \textbf{96}. Academic Press, Inc., New York-London, 1981. 

\bibitem{GG}
G\'erard, P., Grellier, S., \emph{The cubic Szeg\H{o} equation}. 
Ann. Sci. \'Ec. Norm. Sup\'er. (4) \textbf{43} (2010), 761-810. 

\bibitem{GerThom}
Germain, P., Thomann, L, \emph{On the high frequency limit of the LLL 
equation}. arXiv:1509.09080

%\bibitem{GarMar} 
%Garnett, J. B., Marshall, D. E., \emph{Harmonic measure}.
%New Mathematical Monographs, \textbf{2}. Cambridge University Press, 
%Cambridge, 2005. 

%\bibitem{Goluzin} Goluzin, G. M., \emph{Geometrische Funktionentheorie}. 
%Hochschulb\"ucher f\"ur Mathematik, Bd. \textbf{31}, VEB Deutscher Verlag der 
%Wissenschaften, Berlin, 1957; English translation: \emph{Geometric Theory of 
%Functions of a Complex Variable}. Translations of Mathematical Monographs, 
%Vol. \textbf{26}, Amer. Math. Soc., Providence, R.I., 1969.

%\bibitem{GrRbook}
%Gradshteyn, I. S., Ryzhik, I. M., \emph{Table of integrals, series, and 
%products}.
%Corrected and enlarged edition edited by Alan Jeffrey. 
%Incorporating the fourth edition edited by Yu. V. Geronimus and M. Yu. 
%Tseytlin. Translated from the Russian. Academic Press, 
%New York-London-Toronto, Ont., 1980. 

%\bibitem{Har} Hardy, G. H., \emph{A theorem concerning Fourier transforms}.
%J. London Math. Soc. \textbf{8} (1933), 227-231. 

%\bibitem{Hed1} 
%Hedenmalm, H.,
%\emph{Heisenberg's uncertainty principle in the sense of Beurling}. 
%J. Anal. Math. \textbf{118} (2012), 691-702. 

\bibitem{HaiHed} Haimi, A., Hedenmalm, H.,
\emph{The polyanalytic Ginibre ensembles}. J. Stat. Phys. \textbf{153} 
(2013), no. 1, 10-47. 

\bibitem{Hed2}
Hedenmalm, H., 
\emph{Bloch functions and asymptotic tail variance}. Adv. Math. \textbf{313}
(2017), 947-990.

%\bibitem{HedKay} 
%Hedenmalm, H., Kayumov, I., 
%\emph{On the Makarov law of the iterated logarithm}. 
%Proc. Amer. Math. Soc. \textbf{135} (2007), 2235-2248.

\bibitem{HKZ} Hedenmalm, H., Korenblum, B., Zhu, K., 
\emph{Theory of Bergman spaces}. Graduate Texts in Mathematics, \textbf{199}. 
Springer-Verlag, New York, 2000. 

%\bibitem{HIP} Hedenmalm, H., Ivrii, O., Prause, I., manuscript in preparation.

\bibitem{HM} 
Hedenmalm, H., Makarov, N., \emph{Coulomb gas ensembles and Laplacian growth}. 
Proc. Lond. Math. Soc. (3) \textbf{106} (2013), 859-907. 

\bibitem{HS} Hedenmalm, H., Shimorin, S., \emph{Weighted Bergman spaces and 
the integral means spectrum of conformal mappings}. Duke Math. J. 
\textbf{127} (2005), 341-393. 

\bibitem{HS2} Hedenmalm, H., Shimorin, S., \emph{On the universal integral 
means spectrum of conformal mappings near the origin}. Proc. Amer. Math. 
Soc. \textbf{135} (2007), 2249-2255  

%\bibitem{Horm}
%H\"ormander, L., \emph{A uniqueness theorem of Beurling for Fourier 
%transform pairs}. Ark. Mat. \textbf{29} (1991), 237-240. 

\bibitem{Hej}
Hejhal, D. A., \emph{Theta functions, kernel functions, and Abelian integrals}.
Memoirs of the American Mathematical Society, No. \textbf{129}. 
American Mathematical Society, Providence, R.I., 1972. 

\bibitem{Hopf}
Hopf, E., \emph{Ergodic theory and the geodesic flow on surfaces of 
constant negative curvature}.
Bull. Amer. Math. Soc. \textbf{77} (1971), 863-877.

\bibitem{HKPV} Hough, J. B., Krishnapur, M., Peres, Y., Vir\'ag, B.,
\emph{Zeros of Gaussian analytic functions and determinantal point processes}.
University Lecture Series, \textbf{51}. 
American Mathematical Society, Providence, RI, 2009. 

\bibitem{Ivrii} 
Ivrii, O., 
\emph{Quasicircles with dimension $1+k^2$ do not exist}. arXiv:1511.07240 

%\bibitem{Ivrii2} 
%Ivrii, O., 
%\emph{Private communication}. 

%\bibitem{Ivrii2} 
%Ivrii, O., 
%\emph{``$\Sigma_{\mathrm{F}}^2<2/3$''}. Private notes, 2015. 

\bibitem{jones}
Jones, P. W., 
\emph{On scaling properties of harmonic measure}. Perspectives in analysis, 
73-81, Math. Phys. Stud., 27, Springer, Berlin, 2005. 

\bibitem{KPZ} Kardar, M., Parisi, G., and Zhang, Y. Z., 
\emph{Dynamic scaling of growing interfaces}. Phys. Rev. Lett. 
\textbf{56} (1986), 889-892. 


\bibitem{KMMW} Klevtsov, S., Ma, X., Marinescu, G., Wiegmann, P.,
\emph{Quantum Hall effect and Quillen metric}. Comm. Math. Phys. \textbf{349}
(2017), no. 3, 819-855.

\bibitem{krish} 
Krishnapur, M., 
\emph{From random matrices to random analytic functions}. 
Ann. Probab. \textbf{37} (2009), 314-346. 

%\bibitem{K} Korenblum, B., \emph{BMO estimates and radial growth of 
%Bloch functions}. Bull. Mer. Math. Soc. \textbf{12} (1985), 99-102. 

%\bibitem{LZ} Le Than Hoang, N., Zinsmeister, M., \emph{On Minkowski dimension 
%of Jordan curves}. Ann. Acad. Sci. Fenn. Math. \textbf{39} (2014), 787-810.

\bibitem{Lyons} Lyons, T., \emph{A synthetic proof of Makarov's law of the 
iterated logarithm}. Bull. London Math. Soc. \textbf{22} (1990), 159-162. 

\bibitem{Mak1}
Makarov, N. G., \emph{On the distortion of boundary sets under conformal 
mappings}. Proc. Lond. Math. Soc. (3) \textbf{51} (1985), 369-384. 

\bibitem{Mak2}
Makarov, N. G., \emph{Probability methods in the theory of conformal 
mappings}. Leningrad Math. J. \textbf{1} (1990), 1-56.
 
\bibitem{Mak3} Makarov, N. G., \emph{Fine structure of harmonic measure}. 
%Algebra i Analiz \textbf{10} (1998), no. 2, 1-62; translation in
St. Petersburg Math. J. \textbf{10} (1999), 217-268 

%\bibitem{MSS} Ma\~n\'e, R., Sad, P., Sullivan, D., 
%\emph{On the dynamics of rational maps}.
%Ann. Sci.  \'Ecole Norm. Sup. (4) \textbf{16} (1983), 193-217. 

%\bibitem{Mar1} Marshall, D. E., \emph{Musings on Brennan's conjecture}.
%Private notes.
\bibitem{Maskit}
Maskit, B., \emph{On Poincar\'e's theorem for fundamental polygons}.
Adv. Math. \textbf{7} (1971), 219-230. 

\bibitem{McM1}
McMullen, C. T., \emph{Amenability, Poincar\'e series and quasiconformal maps}.
Invent. Math. \textbf{97} (1989), 95-127.

\bibitem{McM2} McMullen, C. T., \emph{Thermodynamics, dimension, and the
Weil-Petersson metric}. Invent. Math. \textbf{173} (2008), 365-425.

\bibitem{Ose} Os\k{e}kowski, A., \emph{Survey article: Bellman 
function method and sharp inequalities for martingales}. 
Rocky Mountain J. Math. \textbf{43} (2013), no. 6, 1759-1823.

\bibitem{Per} Per\"al\"a, A., \emph{On the optimal constant for the Bergman
projection onto the Bloch space}. Ann. Acad. Sci. Fenn. Math. \textbf{37}
(2012), no. 1, 245-249.

\bibitem{Poc} 
Pocovnicu, O., \emph{Traveling waves for the cubic Szeg\H{o} equation on 
the real line}. Anal. PDE \textbf{4} (2011), 379-404. 

\bibitem{Pom} Pommerenke, Ch., \emph{Boundary behaviour of conformal maps}.
Grundlehren der Mathematischen Wissenschaften \textbf{299}. Springer-Verlag, 
Berlin, 1992. 

\bibitem{PS}
Prause, I., Smirnov, S., \emph{Quasisymmetric distortion spectrum}. 
Bull. Lond. Math. Soc. \textbf{43} (2011), 267-277. 

%\bibitem{PTU}
%Prause, I., Tolsa, X., Uriarte-Tuero, I.,
%\emph{Hausdorff measure of quasicircles}. Adv. Math. \textbf{229} (2012), 
%1313-1328. 

\bibitem{Rudbook}
Rudin, W., \emph{Real and complex analysis}.
Third edition. McGraw-Hill Book Co., New York, 1987. 

\bibitem{ST}
Saff, E. B., Totik, V., \emph{Logarithmic potentials with external fields}.
Appendix B by Thomas Bloom. Grundlehren der Mathematischen Wissenschaften 
\textbf{316}. Springer-Verlag, Berlin, 1997.

\bibitem{Seip} Seip, K., 
\emph{Beurling type density theorems in the unit disk}. 
Invent. Math. \textbf{113} (1993), no. 1, 21-39. 

\bibitem{Serf}
Serfaty, S., \emph{Ginzburg-Landau vortices, Coulomb gases, and 
renormalized energies}. J. Stat. Phys. \textbf{154} (2014), no. 3, 660-680. 

%\bibitem{Slo}
%Slodkowski, Z., \emph{Holomorphic motions and polynomial hulls}.
%Proc. Amer. Math. Soc. \textbf{111} (1991), 347-355. 

\bibitem{Sm}
Smirnov, S., \emph{Dimension of quasicircles}. Acta Math. \textbf{205} 
(2010), 189-197.

\bibitem{Spr} Springer, G., \emph{Introduction to Riemann surfaces.} Second
Edition. Chelsea Publ. Co., 1981. 

\bibitem{Steph} 
Stephenson, K., \emph{Introduction to circle packing.
The theory of discrete analytic functions}. 
Cambridge University Press, Cambridge, 2005. 

\bibitem{Str} 
Strebel, K., \emph{Quadratic differentials}. Ergebnisse der Mathematik und 
ihrer Grenzgebiete (3) \textbf{5}. Springer-Verlag, Berlin, 1984. 

\bibitem{Vill} 
Villani, C., \emph{Optimal transport. Old and new}. 
Grundlehren der Mathematischen Wissenschaften \textbf{338}. 
Springer-Verlag, Berlin, 2009. 

\bibitem{Wenn} Wennman, A., \emph{Discrepancy densities for planar and 
hyperbolic zero packing}. J. Funct. Anal. \textbf{272} (2017), no. 12, 
5282-5306.

\bibitem{Wenn2} Wennman, A., \emph{Private communication}. 

\end{thebibliography}
\end{document}